\documentclass[a4paper,11pt,reqno]{amsart}
\usepackage{amsfonts}
\usepackage{amsmath}
\usepackage{logicproof}
\usepackage{amssymb, amsthm}
\usepackage{tabularx}
\usepackage{caption}
\usepackage{booktabs}
\usepackage{multirow,dsfont}
\usepackage{array}
\usepackage{floatrow}
\usepackage{float}
\usepackage{floatpag}
\usepackage{centernot}
\usepackage[shortlabels]{enumitem}
\newcolumntype{L}[1]{>{\raggedright\let\newline\\\arraybackslash\hspace{0pt}}m{#1}}
\newcolumntype{C}[1]{>{\centering\let\newline\\\arraybackslash\hspace{0pt}}m{#1}}
\newcolumntype{R}[1]{>{\raggedleft\let\newline\\\arraybackslash\hspace{0pt}}m{#1}}
\usepackage[symbol]{footmisc}
\usepackage[labelfont=bf,format=plain,justification=raggedright,singlelinecheck=false]{caption}
\usepackage{mathrsfs}
\usepackage[]{mdframed}
\usepackage[colorlinks]{hyperref}
\usepackage{tcolorbox}
\usepackage{adjustbox}
\tcbuselibrary{theorems}
\newtcbtheorem[number within=section]{mytheo}{Note}
{colback=white!5,colframe=black!35!black,fonttitle=\bfseries}{th}

\numberwithin{equation}{section}
\usepackage{graphicx}
\usepackage{textgreek}

 {
      \theoremstyle{plain}
      
  }
\theoremstyle{plain}
\newtheorem{theorem}{Theorem}[section]
\newtheorem{corollary}[theorem]{Corollary}
\newtheorem{prop}[theorem]{Proposition}
\newtheorem{lemma}[theorem]{Lemma}
\theoremstyle{definition}
\newtheorem{definition}[theorem]{Definition}
\newtheorem{remark}[theorem]{Remark}

\def\CC{\mathbb C}

\def\ZZ{\mathbb Z}

\def\RR{\mathbb R}

\def\LLP{L^{\phi}}
\def\LPS{L^{\psi}}
\def\lphps{ L^{\phi,\psi}}
\begin{document}
\title[Relevant sampling of non-decaying signals.]{Relevant sampling of non-decaying signals in Orlicz and mixed-norm Orlicz spaces defined on a locally compact group}
\author[R. Sarthak]{Sarthak Raj}
\address[R. Sarthak]{Department of Mathematics, Indian Institute of Technology Delhi, India}
\email{sarthakraj.math@gmail.com}
\author{S. Sivananthan}
\address[S. Sivananthan]{Department of Mathematics, Indian Institute of Technology Delhi, India}
\email{siva@maths.iitd.ac.in}
\begin{abstract}
  We study the sampling problem for non-decaying signals defined on a locally compact group. The signals are modeled as elements of suitable subspaces of weighted Orlicz and weighted mixed-norm Orlicz spaces, thereby allowing controlled growth at infinity. Our principal focus is on image spaces of idempotent integral operators. Under suitable oscillation estimates on the associated integral kernels, we establish deterministic sampling theorems that guarantee stable reconstruction from pointwise samples. We further prove an average sampling result in the same setting. In addition, we prove random sampling theorems for functions whose norm is essentially concentrated on a compact subset of the group. In particular, we show that, with high probability, sampling is possible from
$\mathcal{O}\!\left(\mu(K)\log \mu(K)\right)$
randomly chosen sample points, where $K$ denotes the compact set on which the signal, belonging to Orlicz space, is essentially norm concentrated. In the same spirit, a random sampling result is proved for signals in mixed-norm Orlicz space. As applications of the abstract theory, we obtain sampling theorems for weighted shift-invariant spaces and Orlicz modulation spaces.
\end{abstract}
\keywords{Sampling theorems, Random sampling, Idempotent integral operator, Mixed-norm Orlicz space, Orlicz-Modulation space, Non-decaying signals}
\subjclass{41A35, 94A20, 94A12, 41A17, 46E30, 46B09, 47B34}
\maketitle
\section{Introduction}
Sampling theory is one of the cornerstones of signal processing. It provides a mathematical framework for recovering a signal from a discrete collection of its measurements. For a given function space $V$, the sampling problem consists of determining the existence of those discrete sets $X$ for which every function $f\in V$ is uniquely determined and can be stably reconstructed from its sample values
$\{f(x):x\in X\}.$ Thus, another fundamental goal of sampling theory is to characterize the sampling sets of $V$, namely those discrete sets that admit stable reconstruction. This problem has been extensively studied in harmonic analysis \cite{duffinSchaffer1952class,beurling1989contemporary,youngboook,olevskiiulnaoskii2016functionsdisconnectedspectrum}, approximation theory \cite{MhaskarFilbirMzManifold,GrochenigMZappquadrules,FilbirMZqSphere, Krieg_Ullrich_2026Samplingrecovery}, and learning theory \cite{CuckerSmale,SmaleZhouLearningSampling2004,SmaleZhouLearningSampling2005}. The first foundational result in the theory of modern sampling came from the seminal work of Whittaker, Kotelnikov, and Shannon \cite{Whittakersampling,Kotelnikovsampling,ShannonSampling}. They independently proved a sampling theorem for the space of band-limited functions, also known as the Paley-Wiener space. It is known as the \emph{Whittaker-Kotelnikov-Shannon Sampling Theorem}, and it states that any signal $f: \RR \to \CC$ band-limited to $[-\pi \omega, \pi \omega],$ for some $\omega >0,$ i.e., its Fourier transform $\hat{f}$ satisfies $\mathrm{supp}(\hat{f}) \subset [-\pi \omega, \pi \omega],$ can be recovered from its sample values $\{f(k/\omega): k \in \ZZ\}$ using the reconstruction formula
$$\displaystyle f(x) =  \sum_{k \in \ZZ} f(k/\omega)\frac{\sin \pi(\omega x - k)}{\pi(\omega x -k)} \quad \mbox{ for } x \in \RR.$$ From an engineering standpoint, the theorem states that a signal can be reconstructed without loss of information from its discrete samples, provided that the sampling frequency is more than twice the highest frequency component present in the signal. This critical threshold is known as the \emph{Nyquist rate}. Since then, much work has been done to generalize this result in multiple directions; see \cite{UnserSampling50}. On a different theme, a complete description of sampling sets for Paley--Wiener spaces in terms of \emph{Beurling density} \cite{beurling1989contemporary} was established by Landau in \cite{LandauDensity}. The remaining critical density case was later settled by Seip and Ortega-Cerdà \cite{SeipFourierFrame}.

However, the restriction of a signal being band-limited is too stringent for practical applications, as many naturally occurring signals are not band-limited \cite{Samplingnotbandlimited}. For this reason, sampling theory for a more general class of signals, namely shift-invariant spaces, was developed. From the WKS sampling theorem, one can see that the space of band-limited functions is also a shift-invariant space. A detailed account of sampling in shift-invariant spaces is given in \cite{AldroubiGrochenigSiam}. Other sampling problems studied in this setting include local reconstruction \cite{QSunLocalreconstruction}, convolution sampling \cite{AldroubiSUnConvolutionAverageSampling}, average sampling \cite{KangAverageSampling,GarciaAverageSamplinginHSOperators}, and sampling based on forward and backward differences \cite{GarciaDifferenceSampling}. A density theorem for sampling in shift-invariant spaces generated by totally positive functions of finite type was established in \cite{Grochenigtpfinite}. Subsequently, sampling in shift-invariant spaces generated by Mayer scaling function were proved in \cite{RadhaSelvanMeyerScaling}. A major breakthrough came in the work of Gr\"ochenig, Romero, and St\"ockler \cite{GrochenigRomeroStocklerSamplinginSIandGaborforTP}, where they prove density results for sampling in shift invariant spaces generated by functions of totally positive type. This setup was further generalized to the functions that belong to the image space of some idempotent integral operator, and sampling theorems were established \cite{NashedSunJFAIdempotent}. 

The above-mentioned works primarily focus on deterministic sampling sets. Another approach is to consider random sampling, where the sampling points are selected according to a prescribed probability distribution. This approach was introduced by Gr\"ochenig and Bass in \cite{GrochenigBassRandomMultiTrig} for subspaces of trigonometric polynomials. This received a considerable amount of attention due to its practical significance, and since then, the random sampling problem has been studied in multiple setups. The random sampling theorem for shift-invariant subspaces of Lebesgue spaces is explored in \cite{RandomSampSIYang,RelevantSamplingFUhr}. For the random sampling results on the image space of an idempotent integral operator on Lebesgue spaces, see \cite{QSunRandomACHA,DhirajSivaLp}. A generalization of the random sampling result to the image space of an idempotent integral operator on Orlicz spaces, which generalizes all the setups above, see \cite{PatelBajpeyiSivaRelevantOrlicz}. 

As it can be noted, membership in these spaces generally requires a certain degree of decay or localization of the functions. To accommodate signals that do not exhibit such decay, one is naturally led to consider weighted variants of these spaces. A foundational framework for the sampling of non-decaying signals was developed by Nguyen and Unser in \cite{UnserSamplingNOnDecaying}, where the authors studied sampling in weighted shift-invariant subspaces of Lebesgue spaces. This framework was subsequently extended to mixed-Lebesgue spaces in \cite{NondecayingMixedLebesgue}. Motivated by these works, we study the sampling problem for non-decaying signals in a more general setting. Specifically, we extend the framework to image spaces of idempotent integral operators acting on weighted Orlicz spaces and mixed-norm Orlicz spaces.
The sampling framework developed in \cite{UnserSamplingNOnDecaying,NondecayingMixedLebesgue} relies on a particular sampling procedure in which the signal is first pre-filtered by convolution with a suitable kernel, and the resulting filtered signal is then sampled. Equivalently, the measurements obtained in this manner coincide with the representation coefficients of the function in the underlying shift-invariant space. In contrast, our approach is more direct and relies on pointwise measurements of the signal over a discrete sampling set, without requiring any pre-filtering. Furthermore, we formulate our results for signals defined on a general locally compact group, in the spirit of the framework considered in \cite{FuhrGrochenigOscillationSampling}. 

In this paper, we study the sampling problem for non-decaying signals defined on a general locally compact group and belonging to the Orlicz space $L^{\phi},$ and its mixed-norm variant $L^{\phi,\psi},$ see Section \ref{prelim} for the relevant definitions. The choice of Orlicz spaces is justified by their flexibility to represent several function spaces, including the classical Lebesgue spaces. These spaces can accommodate signals that do not possess the standard $L^p$-integrability. By allowing arbitrary Orlicz functions in place of power functions, Orlicz spaces can model a wide range of growth and integrability behaviors, thereby providing a considerably richer framework than the classical $L^p$-case. A recent work in the setting of Orlicz spaces is the study of the sampling discretization problem for finite-dimensional subspaces, also known as the Marcinkiewicz--Zygmund inequalities, which is established in \cite{SergeyKOsovSamplingDiscretizationOrlicz}; see also \cite{TikhinivKolomoitsev2026marcinkiewicz}. This naturally motivates the study of sampling in the Orlicz space setting for more general infinite-dimensional subspaces. Mixed-norm Orlicz spaces, on the other hand, are considered to treat signals that depend on multiple variables, where each variable represents a unique characteristic of the signal. Thus, allowing separate Orlicz functions to govern these characteristics yields far-reaching generality. 

Our main motivation is to consider the sampling for non-decaying signals. The non-decaying nature of the signals is incorporated through the use of weight functions. More precisely, if $w$ is a growing weight, for instance a polynomial weight, then the signals are assumed to belong to the weighted spaces $L^{\phi}_{1/w}$ and $L^{\phi,\psi}_{1/w}$. The growth of $w$ allows functions in these spaces to exhibit controlled growth at infinity, and therefore provide a natural framework for modeling non-decaying signals. Our main results establish sampling theorems for image spaces of idempotent integral operators acting on these weighted spaces. Specifically, Theorems~\ref{samplingorlicz} and~\ref{samplingmixedorlicz} provide stable sampling theorems for the image spaces of idempotent integral operators on $L^{\phi}_{1/w}$ and $L^{\phi,\psi}_{1/w}$, respectively. We also establish analogous average sampling theorems in both settings. Finally, for a class of functions whose norm is essentially concentrated on a compact set, we prove a random sampling theorem. To demonstrate our theory, we consider shift-invariant spaces in both Orlicz and mixed-norm Orlicz spaces, as well as Orlicz-modulation and mixed-norm Orlicz-modulation spaces.  

\subsection{Main Contributions}

The principal contributions of this paper are summarized below.

\begin{enumerate}
    \item We establish sampling theorems for non-decaying signals defined on locally compact groups and belonging to the image spaces of idempotent integral operators acting on weighted Orlicz and weighted mixed-norm Orlicz spaces- Theorem \ref{samplingorlicz} and \ref{samplingmixedorlicz}. The results are obtained under suitable oscillation estimates on the kernels of the integral operators. As an essential part of the proofs, we establish norm equivalence estimates between the underlying function spaces and the associated sequence spaces- Lemma \ref{seqequiv} and \ref{mixedseqequiv}.

    \item We extend the above framework to the setting of average sampling and establish corresponding average sampling theorems for both weighted Orlicz and weighted mixed-norm Orlicz spaces- Theorem \ref{avgsamplingorlicz} and \ref{avgsamplingmixedorlicz}.

    \item For functions in the Orlicz space, whose norm is essentially concentrated on the compact ball $G_N$, we prove a random sampling theorem. In particular, we show that, with high probability, stable recovery is possible from  $\mathcal{O}\left(\mu\left(G_N\right)\log\mu\left(G_N\right) \right)$ randomly chosen sample points. We also establish the corresponding random sampling theorem in the weighted mixed-norm Orlicz setting.
    \item In \cite{UnserSamplingNOnDecaying,NondecayingMixedLebesgue}, Riesz-type bounds were established to guarantee the well-definedness of certain shift-invariant subspaces of Lebesgue and mixed-Lebesgue spaces. We extend these results to the setting of weighted Orlicz and weighted mixed-norm Orlicz spaces by proving analogous Riesz-type bounds. These spaces turn out to be the image space of idempotent integral operators and are therefore used to demonstrate the sampling results. Other objects for demonstration of sampling results are the Orlicz and mixed-norm Orlicz modulation spaces.
    \item As auxiliary results of independent interest, we establish boundedness properties of idempotent integral operators on weighted Orlicz and weighted mixed-norm Orlicz spaces.
\end{enumerate} 
\subsection{Organization} This paper is organized as follows. In Section~\ref{prelim}, we introduce the necessary background and notation. Section~\ref{estimates} is devoted to establishing the fundamental estimates that are used repeatedly throughout the remainder of the paper. In Sections~\ref{samplingorliczsection} and~\ref{samplingmixedorliczsection}, we prove sampling theorems for the image spaces of idempotent integral operators on weighted Orlicz and weighted mixed-norm Orlicz spaces, respectively. Finally, in Section~\ref{examples}, we establish the well-definedness of certain shift-invariant subspaces of weighted Orlicz and weighted mixed-norm Orlicz spaces, and apply the developed theory to obtain sampling theorems for these spaces as well as for Orlicz and mixed-norm Orlicz modulation spaces. 

\section{Preliminaries}\label{prelim}
This section is devoted to introducing the notation and mathematical background that will be used throughout the paper.

 Let $G$ be a locally compact, Hausdorff, second countable group. It is well known that $G$ admits a unique (up to scalar multiples) left-invariant Radon measure, called the \emph{left Haar measure}, which will be denoted by $\mu$. The modular function on $G$ will be denoted by $\Delta.$ Further, $G$ is metrizable. Moreover, the metric $d$ can be chosen to be left-invariant and every closed ball is compact \cite{haagerup2006proper}.
\begin{definition}
    A countable subset $\{x_i : i \in I\} \subset G$ is called relatively separated if there is a $r>0$ such that 
    \begin{equation*}
        \sup_{x \in G} \sum_{i \in I} \chi_{B(x_i,r)}(x) < \infty.
    \end{equation*}
    And it is called $q-$dense if 
    \begin{equation*}
         \inf_{x \in G} \sum_{i \in I} \chi_{B(x_i,q)}(x) \geq 1.
    \end{equation*}
\end{definition}
For a relatively separated set $X=\{x_i:i\in I\}$, we define
$$N_X(x):=\sum_{i\in I}\chi_{B(x_i,r)}(x)$$
and
$$L_X:=\sup_{x\in G}N_X(x).$$

The class of non-decaying signals studied in this article is taken to consist of functions belonging to Orlicz spaces and mixed-norm Orlicz spaces. The decay of these functions are controlled by the weight on these spaces. We now define these spaces and fix the notations. Our presentation follows the definitions, notation, and terminology of \cite{RaoRenOrliczbook}.
\begin{definition}[Orlicz function]
    A continuous convex non-decreasing function $\phi: \RR \to [0, \infty]$ is said to be an Orlicz function (or Young's function) if
    \begin{enumerate}[a)]
    \item $\phi(x)=\phi(-x),$
        \item $\phi(0)=0$ and $\lim_{x \to \infty} \phi(x)= \infty.$
    \end{enumerate}
\end{definition}
In the classical definition, an Orlicz function is not required to be continuous. Indeed, it may attain the value infinity, and consequently may fail to be continuous. Throughout this paper, however, we restrict our attention to continuous Orlicz functions. \\
The following are some standard examples of Orlicz functions:
\begin{enumerate}
    \item $\phi(x)=|x|^p /p $ for $p \geq 1$. 
    \item $\phi(x)=e^{|x|}-|x|-1$.
    \item $\phi(x)=|x|\log(|x|)$.
    \item $\phi_{\alpha}(x)=|x|^{\alpha} (1+\log(|x|))$ for $\alpha >3$.
\end{enumerate}
For an Orlicz function $\phi,$ a related convex function $\phi^*,$ which is referred to as \emph{complementary function} to $\phi$, can be defined as 
\begin{equation}
\label{complementaryorlicz}
    \phi^{*}(y)=\sup\{x|y|- \phi(x): x \geq 0\}.
\end{equation} \par
It is easily seen that this function is again an Orlicz function. The pair $(\phi,\phi^*)$ of an Orlicz function and its complementary function is called a \emph{complementary Young's pair}, and it satisfies the Young's inequality 
\begin{equation}
\label{orliczholder}
    xy \leq \phi(x) + \phi^*(y) \mbox{ for } x,y \in \RR
\end{equation}
which is evident from (\ref{complementaryorlicz}).
\begin{definition}[$\Delta_2-$condition]
    An Orlicz function $\phi$ is said to satisfy the $\Delta_2-$condition if there is a constant $C>0$ such that 
    $$\phi(2x) \leq C \phi(x) \mbox{ for all } x>0. $$
\end{definition}

\begin{definition}[Weight function]
    A continuous function
    $$w: G \to (0, \infty)$$ 
    satisfying $w(x)=w(x^{-1})$ for all $x \in G$ is called a weight function.
\end{definition}
A weight function is said to be sub-multiplicative if it satisfies 
$$w(xy) \leq w(x) w(y)$$ for all $x$ and $y$ in $G.$

\begin{definition}[Weighted Orlicz space]
     Let $\phi$ be an Orlicz function, then the associated Orlicz space is denoted by $L^{\phi}(G),$ and is given as
    \begin{equation*}
        L^{\phi}(G):=\left\{f: G \to \CC \mbox{ measurable }: \int_{G} \phi (\lambda |f(x)|)~ d\mu(x) < \infty \mbox{ for some } \lambda > 0 \right\}.
    \end{equation*}
   The space $L^{\phi}(G)$ can be endowed with a norm, called the \emph{Luxemburg} norm (or \emph{Gauge} norm), given by
   \begin{equation*}
    \|f\|_{L^{\phi}(G)}:= \inf \left\{ \lambda > 0: \int_{G} \phi\left(  \frac{|f(x)|}{\lambda}\right) ~ d\mu(x) \leq 1 \right\}
  \end{equation*}
  which makes the space $(L^{\phi}(G),\|\cdot\|_{L^{\phi}(G)})$ a Banach space.\\ 
     Let  $w$ be a weight function, then the $w-$weighted Orlicz space $L^{\phi}_{w}(G)$ is defined as
    \begin{equation*}
        L^{\phi}_{w}(G)= \left \{ f: G \to \CC \mbox{ measurable }: fw \in L^{\phi}(G) \right\}
    \end{equation*}
    and the norm is given by $\|f\|_{L^{\phi}_{w}(G)}=\|fw\|_{L^{\phi}(G)}.$
\end{definition}

    If an Orlicz function $\phi$ satisfies the $\Delta_2-$condition, then the set of all compactly supported continuous functions $C_{c}(G)$ is dense in $L^{\phi}(G).$ Further, if $(\phi,\phi^*)$ is a complementary Young's pair and $\phi$ satisfies the $\Delta_2-$condition, then $(L^{\phi}(G), \|\cdot\|_{L^{\phi}(G)})^* = (L^{\phi^*}(G), \|\cdot\|_{L^{\phi^*}(G)}).$ If $\phi^*$ also satisfies the $\Delta_2-$condition, then the Banach space $(L^{\phi}(G), \|\cdot\|_{L^{\phi}(G)})$ is reflexive.

   Throughout the remainder of this paper, we assume that every Orlicz function satisfies the $\Delta_2-$condition.

    Let $ (\phi,\phi^*) $ be a complementary Young's pair. Then \emph{H\"older's inequality} for the Orlicz space is given as
 \begin{equation}
     \|fg\|_{L^1(G)} \leq 2 \|f\|_{L^{\phi}(G)} \|g\|_{L^{\phi^*}(G)}
 \end{equation}
 for $f \in L^{\phi}(G)$ and $g \in L^{\phi^*}(G).$

We now introduce the sequence spaces used to model sample values of the signals. These spaces are discrete analogs of the corresponding function spaces introduced above.
\begin{definition}[Orlicz sequence space]
    Let $\Lambda$ be a discrete subset of $G.$ Then, the Orlicz sequence space $\ell^{\phi}(\Lambda)$, is defined as 
    \begin{equation*}
        \ell^{\phi}(\Lambda):=\left\{ (c_{\gamma})_{\gamma \in \Lambda} \in \CC^{\Lambda}: \sum_{\gamma \in \Lambda} \phi\left( {\lambda|c_{\gamma}|} \right)< \infty \mbox{ for some } \lambda > 0 \right\}.
    \end{equation*}
    The corresponding sequence norm is defined as 
\begin{equation*}
    \|(c_\gamma)_{\gamma \in \Lambda}\|_{\ell^{\phi}(\Lambda)}:=\inf \left\{ \lambda> 0: \sum_{\gamma \in \Lambda} \phi \left( \frac{|c_{\gamma}|}{\lambda}\right) \leq 1 \right\}.
\end{equation*} \\
Let $w$ be a weight function, then the $w-$weighted Orlicz sequence space $\ell^{\phi}_{w}(\Lambda)$ is defined as
    \begin{equation*}
        \ell^{\phi}_{w}(\Lambda):=\left\{ (c_{\gamma})_{\gamma \in \Lambda}: (c_{\gamma} w(\gamma))_{\gamma \in \Lambda} \in \ell^{\phi}(\Lambda) \right\}
    \end{equation*}
    with the the norm given by $\|(c_{\gamma})_{\gamma\in \Lambda}\|_{\ell^{\phi}_{w}(\Lambda)}=\|(c_{\gamma} w(\gamma))_{\gamma \in \Lambda}\|_{\ell^{\phi}_{}(\Lambda)}.$
\end{definition}

Next, we are going to introduce the weighted mixed-norm Orlicz spaces. Here the weight will be a continuous function on $G \times G,$ with sub-multiplicativity defined as usual.
\begin{definition}[Mixed norm Orlicz spaces]
    Let $\phi, \psi$ be two Orlicz functions. Then the mixed norm Orlicz space $L^{\phi,\psi}(G \times G)$ is defined as
    \begin{equation*} 
        L^{\phi,\psi}(G \times G) := \left\{ f \in \mathcal{F}(G,L^\psi(G)):  \int_{G} \psi\left(\lambda \|f(\cdot, y)\|_{L^{\phi}(G)}\right)~ d\mu(y) < \infty \mbox{ for some } \lambda >0 \right\},
    \end{equation*}
    where $\mathcal{F}(G,L^\psi(G))$ denotes the collection of measurable functions $f: G \to L^{\psi}(G)$. \\ \\
    \textbf{A word of caution:} It is customary to define mixed-norm Orlicz spaces, and more generally Banach-valued function spaces, using strongly measurable functions. In our setting, however, it is sufficient to assume measurability. Indeed, the range space $L^{\psi}(G)$ is separable, since the Orlicz function $\psi$ satisfies the $\Delta_2$-condition. Consequently, by the Pettis Measurability Theorem, measurability and strong measurability are equivalent, and therefore there is no need to distinguish between the two notions.
    
    The corresponding norm for $f \in L^{\phi,\psi}(G \times G)$ is given by 
\begin{equation*}
    \|f\|_{\lphps(G \times G)}:=\| y \mapsto \|f(\cdot,y)\|_{\LLP(G)} \|_{\LPS(G)}= \inf \left\{ \lambda > 0: \int_{G} \psi \left( \frac{\|f(\cdot,y)\|_{\LLP(G)}}{\lambda} \right)d\mu(y) \leq 1  \right\}
\end{equation*}
which makes it a Banach space.\\
Let $w$ be a weight function on $G \times G$, then the $w-$weighted mixed norm Orlicz space $L^{\phi,\psi}_{w}(G\times G)$ is defined as
    \begin{equation*}
        L^{\phi,\psi}_{w}(G\times G)= \left \{ f: G \times G \to \CC \mbox{ measurable }: fw \in L^{\phi,\psi}(G \times G) \right\}
    \end{equation*}
    and the norm is given by $\|f\|_{L^{\phi,\psi}_{w}(G \times G)}=\|fw\|_{L^{\phi,\psi}(G\times G)}.$
\end{definition}
 
  Let $\phi, \psi$ be Orlicz functions and $\phi^*,\psi^*$ be their complementary Orlicz functions, respectively. Then, the H\"older's inequality in this case is given by 
\begin{equation}
    \int_{G} \int_{G} |fg| \leq 4 \|f\|_{L^{\phi, \psi}(G \times G)} \|g\|_{L^{\phi^*, \psi^*}(G \times G)}
\end{equation}
for $f \in L^{\phi, \psi}(G \times G),$ and $g \in L^{\phi^*, \psi^*}(G \times G).$ If $\phi,\psi$ are strictly convex, then the duality relation $L^{\phi, \psi}(G \times G)^* = L^{\phi^*, \psi^*}(G \times G)$ also holds.   

\begin{definition}[Mixed norm Orlicz sequence spaces]
     Let $\phi, \psi$ be two Orlicz functions, and $\Lambda$ be a discrete subset of $G$. Then the mixed norm Orlicz sequence space $\ell^{\phi,\psi}(\Lambda \times \Lambda)$ is defined as
    \begin{equation*}
        \ell^{\phi,\psi}(\Lambda \times \Lambda) := \left\{ (c_{\gamma, \gamma'})_{(\gamma,\gamma') \in \Lambda \times \Lambda} \in \CC^{\Lambda \times \Lambda}:  \sum_{\gamma' \in \Lambda} \psi\left(\lambda \|(c_{\gamma, \gamma'})_{\gamma \in \Lambda}\|_{\ell^{\phi}(\Lambda)}\right) < \infty \mbox{ for some } \lambda >0 \right\}.
    \end{equation*}
    The corresponding sequence norm for $(c_{\gamma, \gamma'})_{(\gamma,\gamma') \in \Lambda \times \Lambda} \in \ell^{\phi,\psi}(\Lambda \times \Lambda)$ is given by 
\begin{equation*}
    \|(c_{\gamma, \gamma'})_{(\gamma,\gamma') \in \Lambda \times \Lambda}\|_{\ell^{\phi,\psi}(\Lambda \times \Lambda)}:= \left\|\left( \| (c_{\gamma, \gamma'})_{\gamma \in \Lambda}\|_{\ell^{\phi}(\Lambda)} \right)_{\gamma' \in\Lambda}  \right\|_{\ell^{\psi}(\Lambda)}.
\end{equation*}
The $w-$weighted mixed norm Orlicz sequence space $\ell^{\phi,\psi}_{w}(\Lambda \times \Lambda)$ is defined as
    \begin{equation*}
        \ell^{\phi,\psi}_{w}(\Lambda \times \Lambda):=\left\{ (c_{\gamma,\gamma'})_{(\gamma,\gamma') \in \Lambda \times \Lambda}: (c_{\gamma,\gamma'} w(\gamma,\gamma'))_{(\gamma,\gamma') \in \Lambda \times \Lambda} \in \ell^{\phi,\psi}(\Lambda\times \Lambda) \right\}
    \end{equation*}
    with the norm given by $\|(c_{\gamma,\gamma'})_{(\gamma,\gamma')\in \Lambda \times \Lambda}\|_{\ell^{\phi,\psi}_{w}(\Lambda \times \Lambda)}=\|(c_{\gamma,\gamma'} w(\gamma,\gamma'))_{(\gamma,\gamma') \in \Lambda \times \Lambda}\|_{\ell^{\phi,\psi}_{}(\Lambda \times \Lambda)}.$
\end{definition}

\begin{remark}
It is worth emphasizing that the Orlicz space $L^\phi(G\times G)$ is, in general, different from the mixed-norm Orlicz space $L^{\phi,\phi}(G\times G)$, even when the same Orlicz function $\phi$ is used in both variables. The former is defined with respect to the product measure on $G\times G$, while the latter is obtained by iteratively applying the Luxemburg norm in each variable. Consequently, the two spaces are generally distinct and their norms are not, in general, equivalent.
\end{remark}
The following notion of oscillation will play an important role throughout the paper. For a function $f$ on $G$, the $r$-oscillation of $f$ at a point $x\in G$, denoted by $\operatorname{osc}_r(f)(x)$, is defined by
$$\operatorname{osc}_r(f)(x)=\sup_{d(x,y)<r}|f(y)-f(x)|.$$
For functions of several variables, the notion of oscillation is defined analogously by taking the supremum over each individual coordinates lying within distance $r$ of the corresponding coordinates of the given point.

In the coming sections, $a \lesssim b $ means that there is a constant c, independent of relevant parameters, such that $a \leq cb.$ When $a\lesssim b$ and $b \lesssim a,$ we write $a \simeq b$.
 \section{Estimates} \label{estimates}
  This section is devoted to: (i) proving certain equivalences between sequence norms and the corresponding function norms, (ii) proving boundedness of the integral operators. These estimates play a central role in the proofs of the sampling theorems and will be used repeatedly in the sections that follow.

 We begin by establishing norm equivalences that allow sequence norms to be expressed in terms of the corresponding function norms. The first such result is given below.
\begin{lemma}
\label{seqequiv}
    Let $X=\{x_i: i \in I\} \subset G$ be a relatively separated and $r-$dense subset.
    Then for a sequence $(c_i)_{i \in I}$ we have 
    \begin{equation}\label{seqequiveqn}
    \|(c_i)\|_{l^{\phi}_{1/w}(X)} \simeq  \|\sum_{i \in I} |c_i| \chi_{B(x_i,r)}\|_{\LLP_{1/w}(G)},    
    \end{equation}
    where $w$ is a sub-multiplicative weight.
\end{lemma}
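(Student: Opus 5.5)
The plan is to compare the modular functionals on both sides directly and then pass to the Luxemburg norms by rescaling. Throughout, write $B_i:=B(x_i,r)$ and $F(x):=\sum_{i\in I}|c_i|\chi_{B_i}(x)/w(x)$, so the right-hand side of \eqref{seqequiveqn} is $\|F\|_{\LLP(G)}$. Two structural facts will be used.

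\emph{Constant ball measure.} Since the metric $d$ is left-invariant and $\mu$ is left Haar measure, $\mu(B_i)=\mu(B(e,r))=:m_r$ for every $i$. Moreover $0<m_r<\infty$, because the open ball is a nonempty open set and the closed ball is compact.

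\emph{Weight comparable on each ball.} Set $C_w:=\sup_{z\in \overline{B(e,r)}}w(z)$, which is finite by continuity and compactness. For $x\in B_i$ we have $x_i^{-1}x\in B(e,r)$ by left-invariance. Sub-multiplicativity then gives $w(x)\le w(x_i)\,w(x_i^{-1}x)\le C_w w(x_i)$. Using the symmetry $w(z)=w(z^{-1})$, we also get $w(x_i)\le w(x)\,w(x^{-1}x_i)\le C_w w(x)$. Hence $C_w^{-1}w(x_i)\le w(x)\le C_w w(x_i)$ on $B_i$.

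I will take the relative-separation radius to be the same $r$, so that $N_X\le L_X=:L<\infty$. If the separation radius is a different $r'$, I would first show that $\sup_x\sum_i\chi_{B(x_i,r)}(x)$ is still finite. The idea is to cover $\overline{B(e,r)}$ by finitely many translates of $B(e,r')$, using compactness and left-invariance. I expect this to be the only genuinely fiddly point; everything else is convexity bookkeeping.

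\emph{Upper bound for $\|F\|$.} Pointwise, $F(x)\le C_w\sum_i a_i\chi_{B_i}(x)$, where $a_i:=|c_i|/w(x_i)$, and at most $L$ terms are nonzero at each $x$. Convexity with $\phi(0)=0$ gives $\phi\bigl(\sum_{k=1}^{L}t_k\bigr)\le \frac1L\sum_{k}\phi(Lt_k)$. Integrating, we obtain
\[
\int_G\phi\Bigl(\frac{F}{\lambda}\Bigr)d\mu\le \frac{m_r}{L}\sum_i\phi\Bigl(\frac{LC_w a_i}{\lambda}\Bigr).
\]
Take $\lambda=LC_w\max(1,m_r)\,\|(c_i)\|_{\ell^\phi_{1/w}}$. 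Convexity gives $\phi(t/M)\le\phi(t)/M$ for $M\ge1$, so the right-hand side is at most $1$. Hence $\|F\|_{\LLP}\lesssim\|(c_i)\|_{\ell^{\phi}_{1/w}}$.

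\emph{Lower bound for $\|F\|$.} On $B_i$, since all terms are nonnegative, $F(x)\ge |c_i|/w(x)\ge a_i/C_w$. Therefore
\[
m_r\sum_i\phi\Bigl(\frac{a_i}{C_w\lambda}\Bigr)\le\sum_i\int_{B_i}\phi\Bigl(\frac F\lambda\Bigr)d\mu\le L\int_G\phi\Bigl(\frac F\lambda\Bigr)d\mu .
\]
With $\lambda=\|F\|_{\LLP}$ the last integral is at most $1$; this uses continuity of $\phi$ and monotone convergence in the Luxemburg infimum. So $\sum_i\phi(a_i/(C_w\lambda))\le L/m_r$. Replacing $\lambda$ by $\max(1,L/m_r)\lambda$ and applying the same convexity inequality brings the sum below $1$. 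This yields $\|(c_i)\|_{\ell^\phi_{1/w}}\le C_w\max(1,L/m_r)\|F\|_{\LLP}$.

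The $r$-density of $X$ is not actually needed for either direction. The constants depend only on $C_w$, $L$ and $m_r$.
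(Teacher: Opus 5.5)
Your proof is correct and follows essentially the same route as the paper. Both arguments compare the modulars using three ingredients: the two-sided comparison $w(x)\simeq w(x_i)$ on $B(x_i,r)$ from sub-multiplicativity, the constant ball measure $\mu(B(e,r))$, and convexity together with the overlap bound $L_X$. The only difference there is that the paper applies Jensen with weights $\chi_{B(x_i,r)}/N_X$, where you use equal weights $1/L$. Your explicit rescaling from modular to Luxemburg norm via $\phi(t/M)\le\phi(t)/M$ for $M\ge 1$ makes precise a step the paper only asserts as changing the norm ``up to an absolute constant.''
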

\begin{proof}
Let $X=\{x_i: i \in I\}$ be a relatively separated and $r-$dense set. We begin the proof by first considering the following 
\begin{align*}
    \sum_{i \in I} \phi \left( \frac{|c_i|}{w(x_i) \lambda }\right) \mu(B(x_i,r)) &= \sum_{i \in I} \phi \left( \frac{|c_i|}{w(x_i) \lambda }\right) \int_{G} \chi_{B(x_i,r)}(x) ~ d\mu(x) \\
    (*) &= \sum_{i \in I} \int_{G}\phi \left( \frac{|c_i|}{w(x_i) \lambda }\right) \chi_{B(x_i,r)}(x) ~ d\mu(x).
\end{align*}
Since the set  $X$ is relatively separated, therefore each point $x \in G$ can be atmost in $L_X$ balls from $\{B(x_i,r): i \in I\},$ and so
\begin{align*}
     \hspace{4cm} (*) &\leq L_X \int_{G} \phi \left( \sum_{i \in I}\frac{|c_i|}{w(x_i) \lambda }\chi_{B(x_i,r)}(x)\right) ~ d\mu(x).
\end{align*}
For $x \in B(x_i,r)=x_i B(e,r),$ $x_i ^{-1} x \in B(e,r)$ and by the sub-multiplicativity of $w$ 
$$w(x) = w(x_i x_i^{-1} x)\leq w(x_i) w(x_{i} ^{-1} x).$$
Put $M_{\infty}=\sup_{t \in B(e,r)} w(t),$ then 
$$w(x) \leq M_{\infty} w(x_i).$$
Substituting this above, we get 
\begin{align*}
    (*) &\leq L_X \int_{G} \phi \left(  \frac{\sum_{i \in I}M_{\infty}|c_i| \chi_{B(x_i,r)}(x)}{ \lambda w(x)} \right) d \mu(x).
\end{align*}
Therefore 
$$\|\left( c_{i} \right)\|_{\ell^{\phi}_{1/w}(X)} \lesssim L_X  M_{\infty} \left\|\sum_{i \in I} |c_i| \chi_{B(x_i,r)} \right\|_{\LLP_{1/w}(G)}.$$
Multiplying the sum by a constant, as done above, changes the sequence norm up to an absolute constant, which depends only on $r$. To avoid cumbersome notation, we don't write it explicitly. \par
For the reverse estimate,
\begin{align*}
     \int_{G}\phi \left( \frac{\sum_{i \in I} |c_i| \chi_{B(x_i,r)}(x)}{\lambda w(x) }\right) ~ d\mu(x) &= \int_{G}\phi \left( \sum_{i \in I} \frac{ |c_i| }{\lambda  } \frac{N_{X}(x)}{w(x)} \frac{\chi_{B(x_i,r)}(x)}{N_{X}(x)}\right) ~ d\mu(x) \\
     \mbox{(convexity and monotonicity)}  &\leq \int_{G} \sum_{i \in I} \phi\left( L_{X} \frac{|c_i|}{\lambda w(x)} \right) \chi_{B(x_i,r)}(x) ~ d\mu(x)
\end{align*}
again 
$$w(x_i)=w(x x^{-1} x_i) \leq w(x) w(x^{-1} x_i) \leq M_{\infty} w(x)$$
and so 
\begin{align*}
     \int_{G}\phi \left( \frac{\sum_{i \in I} |c_i| \chi_{B(x_i,r)}(x)}{\lambda w(x) }\right) ~ d\mu(x) &\leq \sum_{i \in I} \phi\left( L_{X}M_{\infty} \frac{ |c_i| }{w(x_i) \lambda} \right) \mu(B(x_i,r)). 
\end{align*}
Thus 
\begin{align*}
    \left\|\sum_{i \in I} |c_{i}| \chi_{B(x_i,r)}\right\|_{\LLP_{1/w}(G)} \lesssim L_{X}M_{\infty} \|(c_{i})\|_{\ell^{\phi}_{1/w}(X)}.
\end{align*}
\end{proof}
The next lemma proves a similar result for the weighted mixed-norm Orlicz spaces.
\begin{lemma}
\label{mixedseqequiv}
      Let $X=\{x_i: i \in I\} \subset G$ be a countable set which is relatively separated and $r-$dense.
    Then for a sequence $(c_{i,j})_{i,j \in I}$ we have 
    \begin{equation} \label{mixedseqequiveqn}
    \|(c_{i,j})\|_{l^{\phi,\psi}_{1/w} (X \times X)} \simeq \left\|\sum_{i,j \in I} |c_{i,j}| \left(\chi_{B(x_i,r)}\otimes \chi_{B(x_j,r)}\right) \right\|_{\lphps_{1/w}(G \times G)},    
    \end{equation}
    where $w$ is a sub-multiplicative weight and $g \otimes h (x,y)=g(x)h(y)$.
\end{lemma}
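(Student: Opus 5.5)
The plan is to reduce Lemma \ref{mixedseqequiv} to two applications of Lemma \ref{seqequiv}, one in each variable, using that the mixed norm is an iterated Luxemburg norm. First we localize the weight. For $(x,y)\in B(x_i,r)\times B(x_j,r)$, sub-multiplicativity of $w$ on $G\times G$ gives
\[
w(x,y)\le w(x_i,x_j)\,w(x_i^{-1}x,x_j^{-1}y)\le M w(x_i,x_j), \qquad w(x_i,x_j)\le M w(x,y),
\]
where $M:=\sup_{(s,t)\in \overline{B(e,r)}\times\overline{B(e,r)}} w(s,t)<\infty$ by continuity and compactness of closed balls. Hence, up to the constant $M$, we may replace $1/w(x,y)$ by $1/w(x_i,x_j)$ on each product ball. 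The function $F=\sum_{i,j}|c_{i,j}|\chi_{B(x_i,r)}\otimes\chi_{B(x_j,r)}$ divided by $w$ is then pointwise comparable, within factors $M^{\pm1}$, to
\[
\widetilde F(x,y)=\sum_{i,j}\frac{|c_{i,j}|}{w(x_i,x_j)}\chi_{B(x_i,r)}(x)\chi_{B(x_j,r)}(y).
\]
To justify this for upper and lower bounds separately, we note that each term of $F/w$ is bounded above by $M$ times and below by $M^{-1}$ times the corresponding term of $\widetilde F$. The Luxemburg norms are monotone, so $\|F\|_{L^{\phi,\psi}_{1/w}}\simeq\|\widetilde F\|_{L^{\phi,\psi}}$.

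Next, set $d_{i,j}=|c_{i,j}|/w(x_i,x_j)$ and fix $y$. We have $\widetilde F(\cdot,y)=\sum_i a_i(y)\chi_{B(x_i,r)}$ with $a_i(y)=\sum_j d_{i,j}\chi_{B(x_j,r)}(y)$. Applying Lemma \ref{seqequiv} with the trivial weight $w\equiv1$, which is sub-multiplicative, gives
\[
\|\widetilde F(\cdot,y)\|_{L^\phi}\simeq\|(a_i(y))_i\|_{\ell^\phi(X)},
\]
with constants depending only on $L_X$ and $r$, hence uniform in $y$.

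Then we compare $\|(a_i(y))_i\|_{\ell^\phi}$ with $g(y):=\sum_j \|(d_{i,j})_i\|_{\ell^\phi}\chi_{B(x_j,r)}(y)$. Since at most $L_X$ indices $j$ have $y\in B(x_j,r)$, the triangle inequality in $\ell^\phi$ gives $\|(a_i(y))\|_{\ell^\phi}\le g(y)$. Conversely, for each such $j$ we have $a_i(y)\ge d_{i,j}\ge0$, so each summand of $g(y)$ is at most $\|(a_i(y))\|_{\ell^\phi}$, and therefore $g(y)\le L_X\|(a_i(y))\|_{\ell^\phi}$. Taking the $L^\psi$ norm in $y$ and applying Lemma \ref{seqequiv} once more, again with trivial weight and now with $\psi$, gives
\[
\|g\|_{L^\psi}\simeq\|(\|(d_{i,j})_i\|_{\ell^\phi})_j\|_{\ell^\psi}=\|(c_{i,j})\|_{\ell^{\phi,\psi}_{1/w}}.
\]
Chaining these equivalences, and using monotonicity of the $L^\psi$ Luxemburg norm to pass the pointwise-in-$y$ equivalences through, yields \eqref{mixedseqequiveqn}.

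The main obstacle is the first step: the weight is not of product form, so we cannot weight each variable separately. The plan handles this by freezing the weight at the grid points $(x_i,x_j)$ before iterating, using the two-sided local comparability above. A secondary point is measurability in $y$ of $\|\widetilde F(\cdot,y)\|_{L^\phi}$. This is harmless here because, on the finitely many balls containing a given $y$, it is a locally finite combination of indicator-type functions, so the comparison with the simple-structured function $g$ suffices.
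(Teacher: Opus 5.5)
Your proposal is correct and follows essentially the same route as the paper. Both arguments apply Lemma \ref{seqequiv} twice through the intermediate function $y\mapsto\sum_j\|(c_{i,j})_i\|_{\ell^{\phi}(X)}\chi_{B(x_j,r)}(y)$, compare it using the fact that each $y$ lies in at most $L_X$ balls, and use sub-multiplicativity (the paper's constant $\widetilde{M}_\infty$) to freeze the weight at $(x_i,x_j)$; the only difference, that you localize the weight before iterating rather than after proving the unweighted case, is immaterial.
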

\begin{proof}
    We prove this for the non-weighted case, and then we will adapt the result to the weighted case. For the proof, we use the equivalence described above for the Orlicz space. Since the mixed norm is defined as the consecutive application of the Orlicz norm, we use the equivalence in two steps, one for each application of the Orlicz norm.\par 
Let $X=\{x_i : i \in I\}$ be a relatively separated and $r-$dense subset of $G.$ Note that for each fixed $j,$ we have 
$$\|(c_{i,j})_{i \in I}\|_{\ell^{\phi}(X)} \lesssim L_{X} \left \| \sum_{i \in I} |c_{i,j}| \chi_{B(x_i,r)} \right \|_{L^{\phi}(G)}.$$
Now using this we get 
\begin{align*}
    \sum_{j \in I} \|(c_{i,j})_{i \in I}\|_{\ell^{\phi}(X)} \chi_{B(x_j,r)}(y) &\lesssim L_{X} \sum_{j \in I} \left \| \sum_{i \in I} |c_{i,j}| \chi_{B(x_i,r)} \right \|_{L^{\phi}(G)} \chi_{B(x_j,r)}(y) \\
    &\lesssim L_{X}^2 \left \|\sum_{j \in I} \sum_{i \in I} |c_{i,j}| \chi_{B(x_i,r)}(\cdot) \chi_{B(x_j,r)}(y) \right \|_{L^{\phi}(G)}.
\end{align*}
This implies 
\begin{align*}
    \left \|\sum_{j \in I} \|(c_{i,j})_{i \in I}\|_{\ell^{\phi}(X)} \chi_{B(x_j,r)} \right \|_{L^{\psi}(G)} \lesssim L_{X}^2 \left \| \left\| \sum_{i,j \in I} |c_{i,j}| \chi_{B(x_i,r)}(\cdot)\chi_{B(x_j,r)}(y)\right\|_{L^{\phi}(G)} \right\|_{L^{\psi}(G)}.
\end{align*}
But also one has that 
\begin{align*}
    \left \|\sum_{j \in I} \|(c_{i,j})_{i \in I}\|_{\ell^{\phi}(X)} \chi_{B(x_j,r)} \right \|_{L^{\psi}(G)} \gtrsim \frac{1}{L_X} \left\| \left(\|(c_{i,j})_{i \in I}\|_{\ell^{\phi}(X)} \right)_{j \in I} \right\|_{\ell^{\psi}(X)} =\frac{1}{L_X} \|(c_{i,j})_{i,j \in I}\|_{\ell^{\phi,\psi}(X \times X)},
\end{align*}
so
\begin{align*}
    \|(c_{i,j})_{i,j \in I}\|_{\ell^{\phi,\psi} (X \times X)} \lesssim L_{X}^3 \left\| \sum_{i,j \in I} |c_{i,j}| \left(\chi_{B(x_i,r)}\otimes \chi_{B(x_j,r)}\right)\right\|_{L^{\phi,\psi}(G \times G)}.
\end{align*}
The following trail of inequalities proves the other direction of the equivalence:
\begin{align*}
    \left\| \sum_{i,j \in I} |c_{i,j}| \left(\chi_{B(x_i,r)}\otimes \chi_{B(x_j,r)}\right)\right\|_{L^{\phi,\psi}(G \times G)} &= \left \| \left\| \sum_{i,j \in I} |c_{i,j}|  \chi_{B(x_i,r)}(\cdot) \chi_{B(x_j,r)}(y) \right\|_{L^{\phi}(G)} \right\|_{L^{\psi}(G)} \\
 (\mbox{Trinagle inequality})   & \lesssim \left \| \sum_{j \in I}\left\| \sum_{i \in I} |c_{i,j}| \chi_{B(x_i,r)} \right\|_{L^{\phi}(G)} \chi_{B(x_j,r)}\right\|_{L^{\psi}(G)} \\
    & \lesssim L_{X} \left \| \sum_{j \in I} \|(c_{i,j})_{i \in I}\|_{\ell^{\phi}(X)} \chi_{B(x_j,r)} \right\|_{L^{\psi}(G)} \\
    &\lesssim L_{X}^2 \left\|(c_{i,j})_{i,j \in I} \right\|_{\ell^{\phi,\psi}(X \times X)}.
\end{align*}
Now, to get the weighted estimates, observe that 

\begin{multline*}\frac{1}{L_{X}^2} \left\| \sum_{i,j \in I} \frac{|c_{i,j}|}{w(x_i,x_j)} \left(\chi_{B(x_i,r)}\otimes \chi_{B(x_j,r)}\right)\right\|_{L^{\phi,\psi}(G \times G)} \lesssim \left\|\left(\frac{c_{i,j}}{w(x_i,x_j)}\right)_{i ,j \in I}\right\|_{\ell^{\phi,\psi}(X \times X)}\\ \lesssim L_{X}^3 \left\| \sum_{i,j \in I} \frac{|c_{i,j}|}{w(x_i,x_j)} \left(\chi_{B(x_i,r)}\otimes \chi_{B(x_j,r)}\right)\right\|_{L^{\phi,\psi}(G \times G)}.\end{multline*}
Putting
$$\widetilde{M}_{\infty}= \sup\{ w(x,y): {d(x,e)<r, d(y,e) <r} \}< \infty,$$
and noting that for $x \in B(x_i,r)$ and $y \in B(x_j,r),$ we have 
$$w(x_i,x_j) \leq \widetilde{M}_{\infty} w(x,y) \mbox{ and } w(x,y) \leq \widetilde{M}_{\infty} w(x_i,x_j)$$
by the sub-multiplicativity of the weight function. Using these relations in the above equivalence gives 
\begin{multline*}\frac{1}{\widetilde{M}_{\infty}L_{X}^2} \left\| \sum_{i,j \in I} |c_{i,j}| \left(\chi_{B(x_i,r)}\otimes \chi_{B(x_j,r)}\right)\right\|_{L^{\phi,\psi}_{1/w}(G \times G)} \lesssim \left\|\left(c_{i,j}\right)_{i ,j \in I}\right\|_{\ell^{\phi,\psi}_{1/w}(X \times X)}\\ \lesssim \widetilde{M}_{\infty}L_{X}^3 \left\| \sum_{i,j \in I} |c_{i,j}|\left(\chi_{B(x_i,r)}\otimes \chi_{B(x_j,r)}\right)\right\|_{L^{\phi,\psi}_{1/w}(G \times G)}.\end{multline*}

\end{proof}
We now turn our attention to establishing the boundedness of the integral operators. These results rely on the assumptions imposed on the kernel of the integral operator. To this end, we first require an interpolation theorem for operators on Orlicz spaces, which we state below.
\begin{theorem}[Orlicz Interpolation Theorem, \cite{OrliczInterpolation}]\label{orliczinterpolation}
    Let $T: L^{1}(G) + L^{\infty}(G) \to L^{1}(G) + L^{\infty}(G) $ be an operator that maps $T$ maps $L^{1}(G)$ into $L^1 (G)$ and $L^\infty (G)$ into $L^\infty (G)$ and 
    \begin{align*}
        \|Tf-Tg\|_{L^1 (G)} &\leq M \|f-g\|_{L^1 (G)} \quad \forall f,g \in L^{1} (G)  \\
        \|Tf\|_{L^{\infty}(G)} &\leq M \|f\|_{L^{\infty} (G)} \quad \forall f \in L^{\infty} (G).
    \end{align*}
    Then $T$ maps $L^{1}(G) \cap L^{\phi} (G)$ into $L^{\phi} (G)$ and 
    \begin{equation*}
        \|Tf\|_{L^{\phi}(G)} \leq M \|f\|_{L^{\phi}(G)}, \quad \forall f \in L^{1} (G) \cap L^{\phi} (G).
    \end{equation*}
\end{theorem}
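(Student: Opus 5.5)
This is an interpolation theorem of Calderón--Mityagin type for Orlicz spaces. I would derive it from the monotonicity of the Orlicz modular with respect to Hardy--Littlewood--Pólya majorization. The operator here is only assumed Lipschitz on $L^1$ (not necessarily linear), so the natural tool is the \emph{submajorization} (decreasing rearrangement) estimate. Denote by $f^*$ the decreasing rearrangement of $|f|$ on $(0,\infty)$ with respect to $\mu$. The first step is to prove
\begin{equation*}
\int_0^t (Tf)^*(s)\,ds \le M\int_0^t f^*(s)\,ds \qquad \text{for all } t>0,\ f\in L^1(G)\cap L^\infty(G)+\dots.
\end{equation*}
I would do this via the $K$-functional for the couple $(L^1,L^\infty)$, using Peetre's identity $K(t,f;L^1,L^\infty)=\int_0^t f^*(s)\,ds$.

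Given $f\in L^1$ and $t>0$, I would truncate at level $\lambda=f^*(t)$. Set $g=\operatorname{sgn}(f)\min(|f|,\lambda)\in L^\infty$ and $h=f-g$, so that $\|h\|_{L^1}+t\|g\|_{L^\infty}=\int_0^t f^*$. Now write $Tf=(Tf-Tg)+Tg$. Since $f,g\in L^1$ (here $g\in L^1$ because $|g|\le|f|$), the Lipschitz hypothesis gives $\|Tf-Tg\|_{L^1}\le M\|h\|_{L^1}$, and the $L^\infty$ bound gives $\|Tg\|_{L^\infty}\le M\|g\|_{L^\infty}$. Hence $K(t,Tf)\le M\,K(t,f)$, which is the desired submajorization $Tf\prec\prec Mf$. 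This is the step where nonlinearity matters. The splitting must use the difference $Tf-Tg$ rather than $T(f-g)$, and this is exactly why the hypothesis is stated as a Lipschitz condition.

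Next I would pass from submajorization to modulars. For a convex nondecreasing $\phi$ with $\phi(0)=0$, the Hardy--Littlewood--Pólya--Calderón lemma states that $u\prec\prec v$ implies $\int_0^\infty\phi(u^*)\le\int_0^\infty\phi(v^*)$. I would prove this by writing $\phi(s)=\int_0^\infty (s-a)_+\,d\phi'(a)+\phi'(0)s$ and checking that $\int (u^*-a)_+ \le \int (v^*-a)_+$ for each $a\ge0$. That inequality itself follows from submajorization, since $\int(v^*-a)_+=\sup_t\bigl(\int_0^t v^*-at\bigr)$. Equimeasurability gives $\int_G\phi(|F|/\lambda)\,d\mu=\int_0^\infty\phi(F^*/\lambda)\,ds$. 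Applying this with $u=Tf/(M\lambda)$ and $v=f/\lambda$ shows that $\int_G\phi(|f|/\lambda)\le1$ implies $\int_G\phi(|Tf|/(M\lambda))\le1$. Taking the infimum over $\lambda$ yields $\|Tf\|_{L^\phi}\le M\|f\|_{L^\phi}$, and in particular $Tf\in L^\phi$.

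The main obstacle is technical rather than conceptual. It lies in justifying the rearrangement machinery on a general (possibly non-$\sigma$-finite in parts, possibly atomic) Haar measure space, and in making sure the truncation $g$ is admissible, i.e.\ lies in $L^1\cap L^\infty$ so that both hypotheses apply. For $f\in L^1$ the truncation is automatically integrable, which handles the second point. If $\mu$ has atoms (discrete $G$), Peetre's formula still holds with $f^*$ a step function, and the argument goes through unchanged. Since the paper cites this result from the literature, in context I would simply invoke the reference rather than redo these measure-theoretic details.
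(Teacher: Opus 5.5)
The paper gives no proof of this statement. It is quoted from \cite{OrliczInterpolation} and only used, for linear integral operators, in the weighted Schur lemma. Your argument is a correct, self-contained proof along the standard Calder\'on--Mityagin lines, so it supplies what the paper leaves to the reference.

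\textbf{What works.} The truncation $g=\operatorname{sgn}(f)\min(|f|,f^*(t))$ gives $\|f-g\|_{L^1}+t\|g\|_{L^\infty}\le\int_0^t f^*$. Splitting $Tf=(Tf-Tg)+Tg$ is exactly what lets the Lipschitz-on-$L^1$ hypothesis replace linearity, and it yields $\int_0^t (Tf)^*\le M\int_0^t f^*$. The representation $\phi(s)=\phi'(0^+)s+\int_{(0,\infty)}(s-a)_+\,d\phi'(a)$, together with $\int_0^\infty (v^*-a)_+=\sup_t\bigl(\int_0^t v^*-at\bigr)$, then transfers this submajorization to the modular. The Luxemburg-norm bound follows by taking the infimum over admissible $\lambda$.

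\textbf{Minor remarks.}
\begin{enumerate}
\item The measure-theoretic worries you raise are moot. A second countable locally compact group is $\sigma$-compact, so $\mu$ is $\sigma$-finite.
\item For $Tf$ you only need the easy half of Peetre's formula, namely $\int_0^t (A+B)^*\le\|A\|_{L^1}+t\|B\|_{L^\infty}$. This follows from $(A+B)^*\le A^*+\|B\|_{L^\infty}$ and needs no atomless or resonance assumption.
\item The case $M=0$, where you cannot divide by $M$, should be set aside separately. It is trivial: $\|T0\|_{L^\infty}\le 0$ forces $T0=0$, and then $Tf=T0=0$ for every $f\in L^1(G)$.
\item Your first display should be stated for $f\in L^1(G)$. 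That is the only domain on which the Lipschitz step is available, and it is exactly the domain in the statement.
\end{enumerate}
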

The following result is motivated by the corresponding result for Lebesgue spaces; see \cite{DahlkeShearletCoorbit}.
\begin{lemma}[Weighted Schur's Lemma]\label{weightedorliczschur}
Let $K$ be a Borel measurable function on $G \times G$, and $w$ be a weight function on $G$. Consider the following integral   
    \begin{equation*}
        Tf(x)=\int_{G} K(x,y) f(y) ~ d\mu(y)
    \end{equation*}
for $f \in \LLP_{1/w}(G) $. If the integral kernel $K$ satisfies 
\begin{equation}
    \|K\|_{S}= \operatorname{max}\left\{ \sup_{x \in G} \int_{G} |K(x,y)| \frac{w(y)}{w(x)} ~ d \mu(y), \sup_{y \in G} \int_{G} |K(x,y)| \frac{w(y)}{w(x)} ~ d\mu(x)\right\} < \infty,
\end{equation}
then, $Tf(x)$ is defined for $\mu-$a.e. $x$ and $T$ is a bounded operator on $\LLP_{1/w}(G).$ Moreover, $T$ satisfies the estimate
$$\|Tf\|_{\LLP_{1/w}(G)} \leq \|K\|_{S} \|f\|_{\LLP_{1/w}(G)}, \quad \forall f \in L^{\phi}_{1/w}(G).$$
\end{lemma}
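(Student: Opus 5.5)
The plan is to remove the weight by conjugation and then apply the Orlicz Interpolation Theorem (Theorem \ref{orliczinterpolation}) to an unweighted operator. Since $f \in \LLP_{1/w}(G)$ exactly when $g := f/w \in \LLP(G)$, with $\|f\|_{\LLP_{1/w}(G)} = \|g\|_{\LLP(G)}$, we write
\begin{equation*}
\frac{Tf(x)}{w(x)} = \int_G K(x,y)\frac{w(y)}{w(x)}\, g(y)\, d\mu(y) =: \widetilde{T} g(x), \qquad \widetilde{K}(x,y) := K(x,y)\frac{w(y)}{w(x)}.
\end{equation*}
The claim then reduces to showing that $\widetilde{T}$ is bounded on $\LLP(G)$ with norm at most $\|K\|_S$. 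The hypothesis says precisely that $\widetilde{K}$ satisfies the two classical Schur conditions with constant $\|K\|_S$.

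\textbf{Step 1: the endpoint estimates.} For $h \in L^\infty(G)$ we have $|\widetilde{T}h(x)| \le \|h\|_\infty \int_G |\widetilde{K}(x,y)|\,d\mu(y) \le \|K\|_S \|h\|_\infty$. For $h \in L^1(G)$, Tonelli gives
\begin{equation*}
\int_G |\widetilde{T}h(x)|\, d\mu(x) \le \int_G |h(y)| \int_G |\widetilde{K}(x,y)|\, d\mu(x)\, d\mu(y) \le \|K\|_S \|h\|_{L^1(G)}.
\end{equation*}
Because $\widetilde{T}$ is linear, the difference (Lipschitz) form required in Theorem \ref{orliczinterpolation} is automatic. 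We also need $\widetilde{T}$ defined on $L^1 + L^\infty$: split $h = h_1 + h_2$ and note that the two bounds make the defining integral absolutely convergent a.e. Applying Theorem \ref{orliczinterpolation} then yields $\|\widetilde{T}h\|_{\LLP} \le \|K\|_S \|h\|_{\LLP}$ for all $h \in L^1(G) \cap \LLP(G)$.

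\textbf{Step 2: extending to all of $\LLP(G)$.} This is the main obstacle, because the interpolation theorem only covers $L^1 \cap \LLP$. First, every $g \in \LLP$ lies in $L^1 + L^\infty$: by convexity, $\{|g| > 1\}$ has finite measure (after scaling) and $g\chi_{\{|g|>1\}} \in L^1$. Hence $\widetilde{T}g(x)$ is defined for a.e. $x$, which is also the a.e.-definedness claim for $Tf$.

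Second, we need the norm bound. We would take $g \ge 0$ (replacing $K$ by $|K|$ dominates the general case) and use an increasing sequence $g_n = \min(g,n)\chi_{G_n} \in L^1 \cap \LLP$, where the $G_n$ are compact balls exhausting $G$ (available by second countability). Then $|\widetilde{T}g_n| \uparrow |\widetilde{T}|g|$ pointwise by monotone convergence. The Luxemburg norm has the Fatou property, again by monotone convergence inside $\phi$ and continuity of $\phi$. This gives $\|\widetilde{T}g\|_{\LLP} \le \liminf \|\widetilde{T}g_n\|_{\LLP} \le \|K\|_S \|g\|_{\LLP}$. For complex $g$, split $g$ into its positive and negative real and imaginary parts, or use $|\widetilde{T}g| \le |\widetilde{K}|\,|g|$ directly to get the bound for $|\widetilde{T}g|$.

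Alternatively, density of $C_c(G)$ under the $\Delta_2$-condition, combined with the a.e. domination, gives the same extension.

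Finally, undoing the conjugation gives $\|Tf\|_{\LLP_{1/w}(G)} = \|\widetilde{T}g\|_{\LLP(G)} \le \|K\|_S \|f\|_{\LLP_{1/w}(G)}$.
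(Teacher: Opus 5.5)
Your proposal is correct and follows the paper's route: you conjugate by $w$ to the kernel $K(x,y)w(y)/w(x)$ and apply the Orlicz interpolation theorem to the $L^1$ and $L^\infty$ Schur bounds, and your Step 2 (using $L^{\phi}\subset L^1+L^\infty$ for a.e.\ definedness, then monotone approximation and the Fatou property to pass from $L^1\cap L^{\phi}$ to all of $L^{\phi}$) supplies a detail the paper leaves implicit. For complex $g$, note that only the domination $|\widetilde{T}g|\le |\widetilde{T}|\,|g|$ keeps the exact constant $\|K\|_S$, whereas the four-way real/imaginary splitting would only give $4\|K\|_S$.
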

\begin{proof}
    The result follows directly, for non-weighted case, from a single application of the Orlicz interpolation theorem, Theorem \ref{orliczinterpolation}, once we observe that
    \begin{equation*}
    \begin{split}
        \|Tf  \|_{L^{1}(G)} &\leq \|K\|_{S} \|f \|_{L^1 (G)} \mbox{ for } f \in L^{1}(G), \mbox{ and } \\
        \|Tf \|_{L^{\infty}(G)} &\leq \|K\|_{S} \|f  \|_{L^{\infty} (G)} \mbox{ for } f \in L^{\infty}(G).
    \end{split}
    \end{equation*}
    These bounds follow immediately from the definition of the operator. Now for the non-weighted case, observe that
    \begin{equation*}
        \frac{Tf(x)}{w(x)} = \int_{G} K(x,y) \frac{w(y)}{w(x)} \frac{f(y)}{w(y)} d\mu(y).
    \end{equation*}
    And therefore, 
    \begin{equation*}
        \|Tf\|_{\LLP_{1/w}(G)} \leq \|K\|_{S} \|f\|_{\LLP_{1/w}(G)}
    \end{equation*}
    follows by applying the non-weighted case to the kernel $\displaystyle\widetilde{K}(x,y)=K(x,y) \frac{w(y)}{w(x)}.$
\end{proof}
The weighted Schur's lemma yields, as a corollary, a generalized Young's inequality for convolution. 
\begin{corollary}[Generelazied Young's Inequality for Orlicz Spaces]\label{youngorlicz}
Let $w$ be a sub-multiplicative weight, and $\phi$ be an Orlicz function satisfying the $\Delta_{2}-$condition. For $f \in L^{\phi}_{1/w}(G)$ and $h \in L^{1}_{w}(G) \cap L^{1}_{w\Delta^{-1}}(G),$ we have the following 
$$ \|f * h\|_{L^{\phi}_{1/w}(G)} \leq \|f\|_{L^{\phi}_{1/w}(G)} \max\{\|h\|_{L^{1}_{w}(G)}, \|h\|_{L^{1}_{w\Delta^{-1}}(G)}\}.$$
\end{corollary}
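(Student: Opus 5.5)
The plan is to realize convolution as an integral operator and apply the Weighted Schur's Lemma (Lemma \ref{weightedorliczschur}) directly. With the convention $f*h(x)=\int_G f(y)\,h(y^{-1}x)\,d\mu(y)$, we have $f*h=Tf$ for the kernel $K(x,y)=h(y^{-1}x)$. It therefore suffices to show
\[
\|K\|_S\le \max\{\|h\|_{L^1_w(G)},\|h\|_{L^1_{w\Delta^{-1}}(G)}\}.
\]
Lemma \ref{weightedorliczschur} then gives both that $f*h$ is defined $\mu$-a.e. and the stated norm bound.

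\textbf{Column integral.} Fix $y$ and consider $\int_G |h(y^{-1}x)|\,\frac{w(y)}{w(x)}\,d\mu(x)$. Substitute $x=yz$; this is legitimate by left invariance of $\mu$. The integral becomes
\[
\int_G |h(z)|\,\frac{w(y)}{w(yz)}\,d\mu(z).
\]
By sub-multiplicativity and symmetry of $w$,
\[
w(y)=w(yz\,z^{-1})\le w(yz)\,w(z^{-1})=w(yz)\,w(z).
\]
So the integrand is at most $|h(z)|w(z)$, and the supremum over $y$ is bounded by $\|h\|_{L^1_w(G)}$.

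\textbf{Row integral.} Fix $x$ and consider $\int_G |h(y^{-1}x)|\,\frac{w(y)}{w(x)}\,d\mu(y)$. First use left invariance to write $y=xu$. Then use the inversion formula $\int_G F(u)\,d\mu(u)=\int_G F(u^{-1})\,\Delta(u^{-1})\,d\mu(u)$. After these two steps the integral becomes
\[
\int_G |h(u)|\,\frac{w(xu^{-1})}{w(x)}\,\Delta(u)^{-1}\,d\mu(u).
\]
Again $w(xu^{-1})\le w(x)\,w(u^{-1})=w(x)\,w(u)$. Hence the supremum over $x$ is at most $\int_G |h(u)|\,w(u)\,\Delta(u)^{-1}\,d\mu(u)=\|h\|_{L^1_{w\Delta^{-1}}(G)}$.

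The only delicate step is the bookkeeping of the modular function in the row integral. One must get the direction of the inversion and the resulting factor $\Delta(u)^{-1}$ right, which is exactly where the second weighted norm $L^1_{w\Delta^{-1}}$ comes from. One should also check that the symmetry $w(x)=w(x^{-1})$ is what turns $w(u^{-1})$ into $w(u)$.

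Measurability of $K$ on $G\times G$ is immediate, since $(x,y)\mapsto y^{-1}x$ is continuous and $h$ is Borel (after modification on a null set). The $\Delta_2$-condition on $\phi$ is used only through the Orlicz Interpolation Theorem (Theorem \ref{orliczinterpolation}), which is already invoked inside Lemma \ref{weightedorliczschur}.
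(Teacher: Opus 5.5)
Your proposal is correct and follows essentially the same route as the paper. Both apply the weighted Schur lemma to $K(x,y)=h(y^{-1}x)$, bound the weight quotient using sub-multiplicativity and symmetry of $w$, and change variables, with the factor $\Delta^{-1}$ arising in the $d\mu(y)$ integral. The only cosmetic difference is that you substitute before bounding the weight, whereas the paper first uses $w(y)/w(x)\le w(y^{-1}x)$ and then substitutes $u=y^{-1}x$.
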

\begin{proof}
   Applying the weighted Schur's lemma to the kernel $K(x,y)=h(y^{-1}x),$ we get 
   \begin{equation*}
       \begin{split}
            \int_{G} |h(y^{-1}x)| \frac{w(y)}{w(x)}d\mu(y) &\leq \int_{G} |h(y^{-1}x)| w(y^{-1}x) d\mu(y) \\
            & = \int_{G} |h(u)| w(u) \Delta^{-1}(u) du =\|h\|_{L^{1}_{w \Delta^{-1}}(G)}.
       \end{split}
   \end{equation*}
   Also, 
   \begin{equation*}
       \begin{split}
            \int_{G} |h(y^{-1}x)| \frac{w(y)}{w(x)}d\mu(x) &\leq \int_{G} |h(y^{-1}x)| w(y^{-1}x) d\mu(x) \\
            & = \int_{G} |h(u)| w(u)  du =\|h\|_{L^{1}_{w}(G)}.
       \end{split}
   \end{equation*}
   Now the claim follows immediately since the assumptions of Lemma \ref{weightedorliczschur} are satisfied.
\end{proof}
In the following, we show that the pointwise evaluation on $V,$ the image space of an idempotent integral operator on $L^{\phi}_{1/w}(G),$ is bounded.
\begin{prop}
  Let $V$ be the image space of an idempotent integral operator $T$ on $L^{\phi}_{1/w}(G)$ with the integral kernel $K$ satisfying Schur's condition. If there is an $r>0$ such that $\|\operatorname{osc}_{r}(K)\|_{S} < \infty,$ then pointwise evaluation is bounded on $V.$
\end{prop}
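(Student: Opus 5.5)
Since $f\in V$ satisfies $f=Tf$, we have the pointwise representation $f(x)=\int_G K(x,y)f(y)\,d\mu(y)$. The idea is to bound $|f(x)|/w(x)$ by a local average of the same kind of integral, and then control that average with H\"older's inequality in the Orlicz setting. Fix $x\in G$. For $z\in B(x,r)$, the definition of oscillation gives $|K(x,y)|\le |K(z,y)|+\operatorname{osc}_r(K)(z,y)$, where the oscillation is taken in the first variable, which is what the multivariable oscillation dominates. Averaging over $z\in B(x,r)$ yields
\begin{equation*}
|f(x)|\le \frac{1}{\mu(B(x,r))}\int_{B(x,r)}\int_G \big(|K(z,y)|+\operatorname{osc}_r(K)(z,y)\big)|f(y)|\,d\mu(y)\,d\mu(z).
\end{equation*}
By left invariance, $\mu(B(x,r))=\mu(B(e,r))$ is a fixed positive constant. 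Write $H=|K|+\operatorname{osc}_r(K)$. Then the right-hand side equals $\mu(B(e,r))^{-1}\int_{B(x,r)} S|f|(z)\,d\mu(z)$, where $S$ is the integral operator with kernel $H$. We have $\|H\|_S\le \|K\|_S+\|\operatorname{osc}_r(K)\|_S<\infty$.

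Next we insert the weight. For $z\in B(x,r)$, sub-multiplicativity gives $w(z)\le M_\infty w(x)$ (as in Lemma \ref{seqequiv}), so
\begin{equation*}
\frac{|f(x)|}{w(x)}\le \frac{M_\infty}{\mu(B(e,r))}\int_{G}\chi_{B(x,r)}(z)\,\frac{S|f|(z)}{w(z)}\,d\mu(z).
\end{equation*}
Apply the Orlicz H\"older inequality with the pair $(\phi,\phi^*)$. This bounds the integral by $2\|\chi_{B(x,r)}\|_{L^{\phi^*}(G)}\,\|S|f|\|_{L^\phi_{1/w}(G)}$. The Luxemburg norm of an indicator is $1/(\phi^*)^{-1}(1/\mu(B(x,r)))$, which depends only on $\mu(B(e,r))$ and is therefore independent of $x$. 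By the weighted Schur's Lemma \ref{weightedorliczschur} applied to $H$, $\|S|f|\|_{L^\phi_{1/w}}\le \|H\|_S\|f\|_{L^\phi_{1/w}}$. Altogether,
\begin{equation*}
|f(x)|\le C\,w(x)\,\|f\|_{L^\phi_{1/w}(G)},\qquad C=\frac{2M_\infty\|H\|_S}{\mu(B(e,r))\,(\phi^*)^{-1}\!\big(1/\mu(B(e,r))\big)}.
\end{equation*}
This shows that the evaluation $f\mapsto f(x)$ is bounded on $V$ for every $x$, with norm at most $Cw(x)$.

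The main subtlety is that elements of $V$ are a priori only equivalence classes. We would therefore fix the representative $f=Tf$, defined pointwise wherever the integral converges, and check that this integral converges for every $x$. Convergence follows from the same estimate, because $|K(x,y)|\le H(z,y)$ for all $z\in B(x,r)$. A second point to address is whether the weighted Schur's lemma applies to the positive kernel $H$ acting on $|f|$. Measurability of $\operatorname{osc}_r(K)$ can be assumed as part of the hypothesis $\|\operatorname{osc}_r(K)\|_S<\infty$.
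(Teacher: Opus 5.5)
Your proof is correct, but it takes a different route from the paper's. The paper uses the oscillation in the \emph{second} (integration) variable. For fixed $x$ it writes $|K(x,y')|\le |K(x,y)|+\operatorname{osc}_r(K)(x,y)$ for $y\in B(y',r)$ and averages over $y\in B(y',r)$. It then uses only the row part of the Schur norms, $\sup_x\int_G |K(x,y)|\frac{w(y)}{w(x)}\,d\mu(y)$ and its analogue for $\operatorname{osc}_r(K)$. This gives $K(x,\cdot)w(\cdot)\in L^1(G)\cap L^\infty(G)$ with norms $\lesssim w(x)$. Since $L^1\cap L^\infty\subset L^{\phi^*}$, H\"older's inequality applied directly to $f(x)=\int_G K(x,y)f(y)\,d\mu(y)$ finishes the proof.

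You instead use the oscillation in the \emph{first} variable to get a local sub-mean-value inequality,
\[
\frac{|f(x)|}{w(x)}\le \frac{M_\infty}{\mu(B(e,r))}\int_{B(x,r)}\frac{S|f|(z)}{w(z)}\,d\mu(z).
\]
You then need the full weighted Schur lemma (row and column conditions, hence the Orlicz interpolation theorem) for the majorant kernel $H=|K|+\operatorname{osc}_r(K)$, plus H\"older against $\chi_{B(x,r)}$.

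The paper's route is more economical in hypotheses. It also exhibits the representer $K(x,\cdot)w\in L^{\phi^*}$ of the evaluation functional, which is exactly the quantity $D_N=\sup_{x\in G_N}\|K(x,\cdot)\|_{L^{\phi^*}_w(G)}$ used later in Theorem \ref{randomsamplingorlicz}.

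Your route needs nothing about individual rows of $K$ beyond the pointwise comparison $|K(x,\cdot)|\le H(z,\cdot)$ for $z\in B(x,r)$. It only needs boundedness of one positive integral operator and finiteness of the associate norm of indicators of balls, so it transfers essentially verbatim to other solid spaces. In the mixed-norm setting, averaging over $B(a,r)\times B(b,r)$ together with the Cross-Schur lemma for $H$ should give bounded point evaluations without the extra assumptions (A1) and (A2) that the paper imposes there.

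Your explicit constant is correct. So is your observation that the same estimate gives absolute convergence of $\int_G K(x,y)f(y)\,d\mu(y)$ at every $x$, which makes the representative $f=Tf$ well defined everywhere.
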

\begin{proof}
  Let $r>0$ be such that the hypothesis is satisfied. Now, for $x, y' \in G,$ consider
\begin{equation*}
    \begin{split}
        |K(x,y')| \leq \operatorname{osc}_{r}(K)(x,y) + |K(x,y)|  \quad \mbox{ for } y \in B(y',r).
    \end{split}
\end{equation*}
 This implies, 
\begin{equation*}
    \begin{split}
    |K(x,y')| \frac{w(y')}{w(x)} &\leq \operatorname{osc}_{r}(K)(x,y)\frac{w(y')}{w(x)} + |K(x,y)|\frac{w(y')}{w(x)}   \\
    & \leq M_{\infty} \left(\operatorname{osc}_{r}(K)(x,y)\frac{w(y)}{w(x)} + |K(x,y)|\frac{w(y)}{w(x)}\right) 
    \end{split}
\end{equation*}
$\mbox{for } y \in B(y',r).$ Now integrate with respect to the variable $y$ on $B(y',r)$
\begin{equation*}
 \begin{split}
    |K(x,y')| \frac{w(y')}{w(x)} \mu(B(y',r)) &\leq  M_{\infty} \int_{B(y',r)}\left(\operatorname{osc}_{r}(K)(x,y)\frac{w(y)}{w(x)} + |K(x,y)|\frac{w(y)}{w(x)}\right) d\mu(y) \\
    &\leq M_{\infty} \int_{G}\left(\operatorname{osc}_{r}(K)(x,y)\frac{w(y)}{w(x)} + |K(x,y)|\frac{w(y)}{w(x)}\right) d\mu(y).
 \end{split}
\end{equation*}
Therefore 
\begin{align}
    \|K(x,\cdot) w(\cdot)\|_{L^\infty (G)} &\leq \sup_{y' \in G} \frac{1}{\mu(B(y',r))}\left( \|\operatorname{osc}_{r}(K)\|_{S} + \|K\|_{S} \right) w(x) \\
    &= \frac{w(x)}{\mu(B(e,r))}\left( \|\operatorname{osc}_{r}(K)\|_{S} + \|K\|_{S} \right).  
\end{align}
Also, it is obvious that 
\begin{equation}
    \|K(x,\cdot) w(\cdot)\|_{L^{1}(G)} \leq w(x) \|K\|_{S}.
\end{equation}
Thus, for each $x \in G,$ $K(x,\cdot) w(\cdot)$ belongs to the space $L^{1}(G)\cap L^{\infty}(G),$ and therefore belongs to $L^{\phi^*} (G),$ see, for example, \cite{BennettSharpleyInterpolation} for such a result. Hence, by applying H\"older's inequality to \eqref{orliczintegralop}, we get that the pointwise evaluation is bounded.
\end{proof} 
Next is Minkowski's integral inequality for the Orlicz norm. We cannot find a proof for this in the literature, so for completeness, we include one here.
\begin{lemma}[Minkowski's integral inequality for Orlicz norm]\label{minkowskiorlicz}
Let $f$ be a non-negative measurable function defined on $G \times G.$ Then the following holds
\begin{equation}\label{minkowskiineqorlicz}
\displaystyle \left\|\int_{G} f(x,\cdot) ~ d\mu(x)\right \|_{L^{\phi}(G)} \leq \int_{G} \|f(x,\cdot)\|_{L^{\phi}(G)} ~ d\mu(x).    
\end{equation}

\end{lemma}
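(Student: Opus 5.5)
The plan is to use duality, i.e. the Orlicz (Köthe) norm description of the Luxemburg norm, rather than work with the modular directly. Put $F(y):=\int_G f(x,y)\,d\mu(x)$; we may assume the right-hand side of \eqref{minkowskiineqorlicz} is finite, otherwise there is nothing to prove. The key fact is that for a non-negative measurable $F$ and a complementary pair $(\phi,\phi^*)$,
\begin{equation*}
\|F\|_{L^{\phi}(G)} \le \sup\left\{ \int_G F g \, d\mu : g\ge 0,\ \int_G \phi^*(g)\,d\mu \le 1\right\} =: \|F\|_{(\phi)},
\end{equation*}
while Young's inequality \eqref{orliczholder} gives the reverse-type bound $\int_G hg\,d\mu \le 2\|h\|_{L^{\phi}(G)}$ whenever $\int\phi^*(g)\le1$ (the H\"older inequality stated in the preliminaries). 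With only these constants, the argument yields \eqref{minkowskiineqorlicz} up to the factor $2$.

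To get the constant $1$ exactly, I would instead argue directly with convexity of the modular, avoiding duality. Set $\lambda_x:=\|f(x,\cdot)\|_{L^{\phi}(G)}$ and $\Lambda:=\int_G \lambda_x\,d\mu(x)$, and assume $0<\Lambda<\infty$. Write
\begin{equation*}
\frac{F(y)}{\Lambda}=\int_G \frac{\lambda_x}{\Lambda}\cdot\frac{f(x,y)}{\lambda_x}\,d\mu(x),
\end{equation*}
which is an average of $f(x,y)/\lambda_x$ against the probability measure $\lambda_x\,d\mu(x)/\Lambda$. Points with $\lambda_x=0$ are handled separately: there $f(x,\cdot)=0$ a.e., and by Tonelli's theorem they contribute nothing to $F$ for a.e. $y$.

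Jensen's inequality for the convex function $\phi$ (which has $\phi(0)=0$ and is continuous) then gives, for a.e. $y$,
\begin{equation*}
\phi\!\left(\frac{F(y)}{\Lambda}\right)\le \int_G \frac{\lambda_x}{\Lambda}\,\phi\!\left(\frac{f(x,y)}{\lambda_x}\right)d\mu(x).
\end{equation*}
Integrating in $y$ and applying Tonelli's theorem gives
\begin{equation*}
\int_G \phi(F/\Lambda)\,d\mu\le\int_G\frac{\lambda_x}{\Lambda}\int_G\phi\big(f(x,y)/\lambda_x\big)\,d\mu(y)\,d\mu(x)\le \int_G\frac{\lambda_x}{\Lambda}\,d\mu(x)=1.
\end{equation*}
This uses $\int_G\phi(f(x,\cdot)/\lambda_x)\le 1$, which holds because the infimum in the Luxemburg norm is attained: by continuity of $\phi$ and monotone convergence, letting $\lambda\downarrow\lambda_x$ preserves the modular bound. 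Hence $\|F\|_{L^\phi(G)}\le\Lambda$, which is \eqref{minkowskiineqorlicz}.

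The main technical points are measurability and the Jensen step. First, $x\mapsto\lambda_x$ must be measurable. I would argue that $\lambda_x\le t$ if and only if $\int\phi(f(x,\cdot)/t)\le1$; each modular $x\mapsto\int\phi(f(x,y)/t)\,d\mu(y)$ is measurable by Tonelli, so taking rational $t$ gives measurability of $\lambda_x$. Second, Jensen's inequality is needed for a possibly $\sigma$-finite weighting, but after normalization the measure is a genuine probability measure. Since $G$ is second countable, $\mu$ is $\sigma$-finite, so Tonelli's theorem applies throughout. I expect the measurability of $x\mapsto\lambda_x$ to be the only delicate step; the rest is routine.
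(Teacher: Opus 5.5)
Your proposal is correct and, once you set aside the duality detour in your first paragraph (which you rightly note would only give the constant $2$), it is essentially the paper's proof: normalize by $\Lambda=\int_G\|f(x,\cdot)\|_{L^{\phi}(G)}\,d\mu(x)$, apply Jensen's inequality with respect to the probability measure $\lambda_x\,d\mu(x)/\Lambda$ on $\{\lambda_x\neq 0\}$, then use Tonelli and the bound $\int_G\phi\bigl(f(x,\cdot)/\lambda_x\bigr)\,d\mu\le 1$. Your arguments for the measurability of $x\mapsto\lambda_x$, the attainment of the Luxemburg infimum, and why the set $\{\lambda_x=0\}$ contributes nothing for a.e.\ $y$ fill in details that the paper leaves implicit.
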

\begin{proof}
    Let us denote the integrand on the RHS of \eqref{minkowskiineqorlicz} by $h(x)=\|f(x,\cdot)\|_{L^{\phi}(G)}$ and its $L^{1}$ norm by $\displaystyle H=\int_{G} h(x) ~ d\mu(x)$ in order to avoid the cumbersome notation. Note that the results hold trivially if $H=0$ or $\infty.$ Thus, for the case when $0<H < \infty,$ it is enough to show 
    $$ \displaystyle \int_{G} \phi \left( \frac{\int_{G}|f(x,y)| ~ d\mu(x)}{H}\right) ~ d\mu(y) \leq 1.$$
    Let us consider the following
    \begin{equation*}
        \begin{split}
             \phi \left(\int_{G} \frac{|f(x,y)|}{H} ~ d\mu(x) \right) &= \phi \left(\int_{h \neq 0} \frac{|f(x,y)|}{h(x)} \frac{h(x)}{H} ~ d\mu(x) \right) \\
             & \leq \int_{h \neq 0} \phi \left( \frac{|f(x,y)|}{h(x)} \right) \frac{h(x)}{H} ~ d\mu(x)
        \end{split}
    \end{equation*}
    where we have used Jensen's in the last inequality. Therefore
    \begin{equation*}
        \begin{split}
            \int_{G} \phi \left( \frac{\int_{G}|f(x,y)| ~ d\mu(x)}{H}\right) d \mu (y) &\leq \int_{G }\int_{h \neq 0} \phi \left( \frac{|f(x,y)|}{h(x)} \right) \frac{h(x)}{H} ~ d\mu(x) ~ d\mu(y) \\
           \mbox{(Fubini's)} &\leq \int_{h \neq 0 }\int_{G} \phi \left( \frac{|f(x,y)|}{h(x)} \right) \frac{h(x)}{H} ~ d\mu(y) ~ d\mu(x) \\
           & = \int_{h \neq 0} \left( \int_{G} \phi \left( \frac{|f(x,y)|}{h(x)} \right) ~d\mu(y)\right) \frac{h(x)}{H} ~ d\mu(x) \\
           &\leq \int_{G}  \frac{h(x)}{H} ~d\mu(x) \leq 1. 
        \end{split}
    \end{equation*}
    This proves the claim.
\end{proof} 
In the following, we discuss a boundedness result for the integral operators on the mixed-norm Orlicz spaces $\lphps_{1/w}(G \times G).$ For a similar result on the mixed Lebesgue spaces see \cite{FelixNickiSchurMIxedLebSP}. \par
To streamline the notation for the next lemma, define
$$U(b,y)=\sup_x\int_G |K((a,b),(x,y))| \frac{w(x,y)}{w(a,b)}\,d\mu(a),  U_b = \int_G U(b,y)\,d\mu(y), ~ U^y = \int_G U(b,y)\,d\mu(b),$$
and
$$V(b,y)=\sup_a\int_G |K((a,b),(x,y))| \frac{w(x,y)}{w(a,b)}\,d\mu(x), V_b = \int_G V(b,y)\,d\mu(y), V^y = \int_G V(b,y)\,d\mu(b).$$

\begin{lemma}[Weighted Schur's Lemma for mixed norm Orlicz Spaces]\label{Schurmixedorlicz}
  Let $K$ be a Borel measurable function on $G \times G \times G \times G,$ and $w$ be a weight function on $G \times G.$ Consider the following integral 
    \begin{align}
        Tf(a,b)= \int_{G} \int_{G} K((a,b),(x,y)) f(x,y) ~ d\mu(x) ~d\mu(y).
    \end{align}
    If the integral kernel $K$ satisfies 
\begin{equation*}
     \|K\|_{CS}=\max\left\{\sup_b U_b,\;\sup_y U^y,
\;\sup_b V_b,\;\sup_y V^y\right\} < \infty
    \end{equation*}
    which we call as the Cross-Schur condition, then $T$ is a bounded operator on $L^{\phi,\psi}_{1/w}(G \times G)$ and satisfies the following bound 
    \begin{align*}
    \|Tf\|_{L^{\phi,\psi}_{1/w}(G \times G)} \leq 2 \|K\|_{CS} \|f\|_{L^{\phi,\psi}_{1/w}(G \times G)} ,\quad \forall f \in L^{\phi,\psi}_{1/w}(G \times G).
    \end{align*}
\end{lemma}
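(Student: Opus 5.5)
First I remove the weight, as in the proof of Lemma \ref{weightedorliczschur}. Writing
\[
\frac{Tf(a,b)}{w(a,b)}=\int_G\int_G \widetilde K((a,b),(x,y))\,\frac{f(x,y)}{w(x,y)}\,d\mu(x)\,d\mu(y),\qquad \widetilde K=K\,\frac{w(x,y)}{w(a,b)},
\]
it is enough to show $\|\widetilde T g\|_{L^{\phi,\psi}}\le 2\|K\|_{CS}\|g\|_{L^{\phi,\psi}}$ for $g=f/w$. The quantities $U,V$ are exactly the unweighted versions of these for $\widetilde K$. From now on I therefore take $w\equiv1$ and assume $K\ge 0$, $f\ge0$.

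\textbf{Step 1: inner variable.} Fix $b$. By Minkowski's integral inequality, Lemma \ref{minkowskiorlicz}, applied in the $a$-variable and with the $y$-integral pulled outside,
\[
\|Tf(\cdot,b)\|_{L^\phi}\le \int_G \Big\| \int_G K((\cdot,b),(x,y)) f(x,y)\,d\mu(x)\Big\|_{L^\phi}\,d\mu(y).
\]
For fixed $(b,y)$ the inner map $f(\cdot,y)\mapsto \int_G K((\cdot,b),(x,y))f(x,y)\,d\mu(x)$ is an integral operator on $G$. Its one-variable Schur constants are
\[
\sup_x\int_G K\,d\mu(a)=U(b,y),\qquad \sup_a\int_G K\,d\mu(x)=V(b,y).
\]
Lemma \ref{weightedorliczschur} with $w\equiv1$ therefore bounds its $L^\phi$ norm by $\max\{U(b,y),V(b,y)\}\le U(b,y)+V(b,y)=:W(b,y)$. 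Setting $F(y)=\|f(\cdot,y)\|_{L^\phi}$, this gives
\[
\|Tf(\cdot,b)\|_{L^\phi}\le \int_G W(b,y)F(y)\,d\mu(y).
\]

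\textbf{Step 2: outer variable.} The right-hand side is a scalar integral operator with kernel $W$ acting on $F\in L^\psi$. Apply Lemma \ref{weightedorliczschur} once more, now with $\psi$ and $w\equiv1$. Its Schur constant is
\[
\max\Big\{\sup_b\int W(b,y)\,d\mu(y),\ \sup_y\int W(b,y)\,d\mu(b)\Big\}\le \max\{\sup_b U_b,\sup_y U^y\}+\max\{\sup_b V_b,\sup_y V^y\}\le 2\|K\|_{CS}.
\]
Since the $L^\psi$ norm is monotone, $\|Tf\|_{L^{\phi,\psi}}\le 2\|K\|_{CS}\|F\|_{L^\psi}=2\|K\|_{CS}\|f\|_{L^{\phi,\psi}}$. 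This also explains the factor $2$ in the statement: it comes from replacing $\max\{U,V\}$ by $U+V$ so that the outer kernel is a single measurable function.

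\textbf{Main obstacle.} The algebra above is short; the work lies in the measure-theoretic justification.
\begin{itemize}
\item $U$ and $V$ are defined through suprema over uncountably many points, so they need to be measurable. I would handle this by using essential suprema, or by noting that we only need a measurable majorant of the pointwise Schur constant.
\item Fubini and Minkowski are applied to nonnegative functions, so they are fine. One must also check that $Tf(a,b)$ converges absolutely a.e. This follows by running the same estimate on $|K||f|$, which gives finiteness of the mixed norm and hence a.e. finiteness.
\item Lemma \ref{weightedorliczschur} was proved only for $f\in L^1\cap L^\phi$ through the interpolation theorem. I would first prove the bound for $f$ in the dense class $C_c$, or for bounded $f$ with compact support (dense because of $\Delta_2$), and extend by monotone convergence, since $K,f\ge0$.
\end{itemize}
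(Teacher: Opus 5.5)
Your proposal is correct and follows the paper's own argument: absorb the weight into the kernel, apply Minkowski's inequality in the inner variable, use the one-variable Schur lemma on each slice $K_{(b,y)}$, and then apply Schur's lemma a second time in $(b,y)$. Bounding $\max\{U,V\}$ by $U+V$ shows explicitly where the factor $2$ comes from, which the paper leaves implicit, and your measurability and monotone-convergence remarks fill in justifications the paper omits.
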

\begin{proof}
    The estimate
\begin{align*}
\|Tf(\cdot,b) w^{-1}(\cdot, b)\|_{L^{\phi}(G)}\leq\int_G\left\|\int_G |K((a,b),(x,y))| \frac{w(x,y)}{w(a,b)}\,\frac{|f(x,y)|}{w(x,y)}\,d\mu(x)\right\|_{L^{\phi}(G)}\,d\mu(y)
\end{align*}
follows from Minkowski's integral inequality \eqref{minkowskiineqorlicz} for the Orlicz norm. For fixed $b$ and $y$, define
$$K_{(b,y)}(a,x)=K((a,b),(x,y)),\qquad f_y(x)=f(x,y).$$
Then
$$\left\|\int_G |K((a,b),(x,y))| \frac{w(x,y)}{w(a,b)}\,\frac{|f(x,y)|}{w(x,y)}\,d\mu(x)\right\|_{L^{\phi}(G)}=\left\|
\int_G |K_{(b,y)}(a,x)|\frac{w(x,y)}{w(a,b)} \, \frac{|f_y(x)|}{w(x,y)}\,d\mu(x)\right\|_{L^{\phi}(G)}.$$
By weighted Schur's Lemma \ref{weightedorliczschur}, if
$$\max\left\{\sup_x \int_G |K_{(b,y)}(a,x)|\frac{w(x,y)}{w(a,b)}\,d\mu(a),\;\sup_a \int_G |K_{(b,y)}(a,x)|\frac{w(x,y)}{w(a,b)}\,d\mu(x)\right\}<\infty,$$
then
$$\left\|\int_G \frac{|K_{(b,y)}(a,x)|}{w(a,b)}\,|f_y(x)| \,d\mu(x)\right\|_{L^{\phi}(G)}\leq
\|K_{(b,y)}\|_{S}\, \left\| f(\cdot,y)w^{-1}(\cdot,y)\right\|_{L^{\phi}(G)}.$$
Consequently,
\begin{align*}
\|Tf(\cdot,b)w^{-1}(\cdot,b)\|_{L^{\phi}(G)}\leq\int_G\|K_{(b,y)}\|_{S}\,\left\|f(\cdot,y)w^{-1}(\cdot,y)\right\|_{L^{\phi}(G)}\,d\mu(y).
\end{align*}
Taking the $L^{\psi}$-norm with respect to $b$, we obtain
\begin{align*}
\left\|\|Tf(\cdot,b) w^{-1}(\cdot,b)\|_{L^{\phi}(G)}\right\|_{L^{\psi}(G)}\leq\left\|\int_G
\|K_{(b,y)}\|_{S}\,\|f(\cdot,y) w^{-1}(\cdot,y)\|_{L^{\phi}(G)}\,d\mu(y)\right\|_{L^{\psi}(G)}.
\end{align*}
The right-hand side of the inequality above sets itself up for a second application of Schur's lemma. 
By the assumption $\|K\|_{CS}<\infty$, and it therefore allows for a second application of Schur's lemma
$$\|Tf\|_{L^{\phi,\psi}_{1/w}(G\times G)}\leq 2\|K\|_{CS}\,\|f\|_{L^{\phi,\psi}_{1/w}(G \times G)}.$$

Finally, observe that the condition $\|K\|_{CS}<\infty$ implies that $U(b,y)$ and $V(b,y)$ are finite for almost every $b$ and $y$ in $G$. Therefore, the Schur conditions for the sliced kernel $K_{(b,y)}$ are automatically satisfied for almost every $b$ and $y$. So the first application of weighted Schur's lemma is justified.
\end{proof}
\begin{remark}
    Using the above result, one can get generalized Young's type inequality for the mixed-norm Orlicz spaces.
\end{remark}
The next result establishes the boundedness of pointwise evaluation on the image space $V$ of $L^{\phi,\psi}_{1/w}(G \times G)$ under the integral operator. In order to ensure this, we need some additional conditions on the integral kernel $K,$ which we list below for easy reference.\\
\textbf{Assumption 1 (A1):} $\displaystyle  \int_{G} \sup_{a,b,y \in G} |K((a,b),(x,y))| w(x,y) d \mu(x) < \infty.$  \\
\textbf{Assumption 2 (A2):} $\displaystyle  \int_{G}  \sup_{a,b,x \in G} |K((a,b),(x,y))| w(x,y) d \mu(y) < \infty.$
\begin{prop}
  Let $V$ be the image space of an idempotent integral operator $T$ on $L^{\phi,\psi}_{1/w}(G \times G)$ with the integral kernel $K.$ Assume that $K$ satisfies the Cross-Schur condition, (A1), and (A2). If there is an $r > 0$ such that $\|\operatorname{osc}_{r}(K)\|_{CS} < \infty,$ then the pointwise evaluation on $V$ is bounded. 
\end{prop}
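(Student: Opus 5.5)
Since every $f\in V$ satisfies $f=Tf$, we have for each $(a,b)\in G\times G$
\[
f(a,b)=\int_G\int_G K((a,b),(x,y))\,w(x,y)\,\frac{f(x,y)}{w(x,y)}\,d\mu(x)\,d\mu(y).
\]
So it suffices to show that $k_{(a,b)}:=K((a,b),(\cdot,\cdot))\,w(\cdot,\cdot)$ lies in $L^{\phi^*,\psi^*}(G\times G)$, with norm bounded by $C\,w(a,b)$, or at least by a constant depending only on $(a,b)$. The mixed-norm H\"older inequality then gives
\[
|f(a,b)|\le 4\|k_{(a,b)}\|_{L^{\phi^*,\psi^*}}\,\|f\|_{L^{\phi,\psi}_{1/w}},
\]
which is exactly boundedness of evaluation at $(a,b)$. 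As in the one-variable proposition, the strategy is to place $k_{(a,b)}$ in a space contained in $L^{\phi^*,\psi^*}$ built from $L^1$ and $L^\infty$ pieces. Inside each Luxemburg norm we use the embedding $L^1(G)\cap L^\infty(G)\hookrightarrow L^{\phi^*}(G)$ (\cite{BennettSharpleyInterpolation}), with $\|g\|_{L^{\phi^*}}\lesssim \max\{\|g\|_{L^1},\|g\|_{L^\infty}\}$.

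First, the $L^\infty$ bound. Fix $(a,b)$ and $(x',y')$. For $(x,y)\in B(x',r)\times B(y',r)$ we have
\[
|K((a,b),(x',y'))|\le \operatorname{osc}_r(K)((a,b),(x,y))+|K((a,b),(x,y))|,
\]
and by sub-multiplicativity $w(x',y')\le \widetilde M_\infty w(x,y)$. Averaging over the product ball gives
\[
|k_{(a,b)}(x',y')|\le \frac{\widetilde M_\infty}{\mu(B(e,r))^2}\int_G\int_G \bigl(\operatorname{osc}_r(K)+|K|\bigr)((a,b),(x,y))\,w(x,y)\,d\mu(x)\,d\mu(y).
\]
Multiplying and dividing by $w(a,b)$, the double integral is bounded by $\int_G V(b,y)\,d\mu(y)=V_b$ computed for $|K|$ and for $\operatorname{osc}_r(K)$ (take the inner $x$-integral and bound it by its supremum over $a$). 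Hence
\[
\|k_{(a,b)}\|_\infty\lesssim w(a,b)\bigl(\|K\|_{CS}+\|\operatorname{osc}_r(K)\|_{CS}\bigr).
\]

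Second, the iterated norm. For fixed $y$, set $g_y(x)=k_{(a,b)}(x,y)$. Its $L^1$ norm is at most $w(a,b)\,V(b,y)$, and its sup is bounded by the estimate above. Hence
\[
\|g_y\|_{L^{\phi^*}}\lesssim \max\{w(a,b)V(b,y),\ \|k_{(a,b)}\|_\infty\}.
\]
Next we take the $L^{\psi^*}$ norm in $y$, again via $L^1\cap L^\infty$. The $L^1$ part $\int_G V(b,y)\,d\mu(y)=V_b$ is finite by Cross-Schur. The constant term $\|k_{(a,b)}\|_\infty$ is not integrable in $y$, which is the main obstacle. We avoid it by not bounding $\|g_y\|_{L^{\phi^*}}$ through $\|k\|_\infty$. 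Instead we use (A1): define
\[
\Phi_1(x)=\sup_{a,b,y}|K((a,b),(x,y))|\,w(x,y),
\]
which is in $L^1$, so $g_y\le \Phi_1$ uniformly in $y$. Combined with the $L^\infty$ bound, this places every $g_y$ in $L^1\cap L^\infty$ with uniform norms, giving $\sup_y\|g_y\|_{L^{\phi^*}}<\infty$.

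For integrability in $y$, we use the Orlicz-norm bound $\|g\|_{L^{\phi^*}}\le$ a function of $\|g\|_{L^1}$ and $\|g\|_\infty$. Write it as
\[
\|g_y\|_{L^{\phi^*}}\lesssim \|g_y\|_{L^1}+\|g_y\|_{L^1}^{\theta}\,\|g_y\|_\infty^{1-\theta},
\]
or more crudely split $\phi^*$ near zero and infinity. Here we use (A2): $\Phi_2(y)=\sup_{a,b,x}|K|\,w$ is in $L^1$. This gives $\|g_y\|_\infty\le\Phi_2(y)$, so $\|g_y\|_\infty$ is integrable in $y$, and of course bounded. Then $\|g_y\|_{L^{\phi^*}}\lesssim \|g_y\|_{L^1}+\|g_y\|_\infty\le w(a,b)V(b,y)+\Phi_2(y)$. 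This is in $L^1(dy)$ by Cross-Schur and (A2), and it is bounded by the previous step. So $y\mapsto\|g_y\|_{L^{\phi^*}}$ lies in $L^1\cap L^\infty\subset L^{\psi^*}$, hence $k_{(a,b)}\in L^{\phi^*,\psi^*}$ and the H\"older step finishes the argument.

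The delicate point I expect is the inequality $\|g\|_{L^{\phi^*}}\lesssim\|g\|_{L^1}+\|g\|_\infty$ with constants independent of $g$. I would justify it from $L^1\cap L^\infty\hookrightarrow L^{\phi^*}$ with the intersection norm $\max\{\|g\|_{L^1},\|g\|_{L^\infty}\}$, which follows since $\phi^*(t)\le \phi^*(1)\,t$ for $t\le1$. I would also check measurability of $y\mapsto\|g_y\|_{L^{\phi^*}}$ via Pettis, as in the paper's remark.
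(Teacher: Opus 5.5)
Your argument is correct and is essentially the paper's proof. The paper also reduces to H\"older's inequality and bounds $\|K((a,b),(\cdot,\cdot))\,w\|_{L^{\phi^*,\psi^*}}$ by four iterated norms, inner in $x$ and outer in $y$, of types $L^1$--$L^1$, $L^1$--$L^\infty$, $L^\infty$--$L^1$ and $L^\infty$--$L^\infty$; these are exactly what your two applications of $L^1\cap L^\infty\hookrightarrow L^{\phi^*}, L^{\psi^*}$ produce. Your write-up is more explicit than the paper's ``by the assumptions, all the norms are finite'': you correctly assign the four terms to the Cross-Schur condition, (A1), (A2), and the product-ball averaging with $\|\operatorname{osc}_r(K)\|_{CS}<\infty$, respectively.
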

\begin{proof}
    It is enough to show that $\|K((a,b),(\cdot,\cdot)) w(\cdot,\cdot)\|_{L^{\phi^*, \psi^*}(G \times G)} < \infty,$ then the results follows from the application of H\"older's inequality. To this end, observe
    \begin{multline*}
         \left\| K((a,b),(\cdot,\cdot)) w(\cdot,\cdot) \right\|_{L^{\phi^*, \psi^*}(G \times G)} 
    \leq \left\| y \mapsto \| K((a,b),(\cdot,y)) w(\cdot,y) \|_{L^{1}(G)} \|\right\|_{L^{1}(G)} + \\
      \left\| y \mapsto\| K((a,b),(\cdot,y)) w(\cdot,y) \|_{L^{1}(G)}\right\|_{L^{\infty}(G)} +   
      \left\| y \mapsto \| K((a,b),(\cdot,y)) w(\cdot,y) \|_{L^{\infty}(G)}  \right\|_{L^{1}(G)} + \\
      \left\| y \mapsto\| K((a,b),(\cdot,y)) w(\cdot,y) \|_{L^{\infty}(G)}\right\|_{L^{\infty}(G)}.
    \end{multline*}
    By the assumptions, all the norms appearing on the right-hand side are finite, which completes the proof.
\end{proof}

\section{Sampling in a subspace of Orlicz space} \label{samplingorliczsection}
We are going to consider the sampling of non-decaying signals which belong to the image space of an idempotent integral operator on $L^{\phi}(G)$.\par
Consider the integral operator $T$ 
\begin{equation}\label{orliczintegralop}
Tf(x)=\int_{G} K(x,y) f(y) ~ d\mu(y)    \quad \mbox{ for } f \in \LLP_{1/w}(G). 
\end{equation}

The integral operator $T$ is considered to be idempotent $(T^2=T)$, and the weight $w$ is assumed to be sub-multiplicative. We also assume that the integral kernel $K$ satisfies the Schur condition $\|K\|_{S} < \infty.$ This makes the integral operator to be a well-defined and bounded operator on $\LLP_{1/w}(G).$ \par
Let us denote, by $V,$ the image space of the idempotent integral operator $T$
\begin{equation*}
    V:=\{Tf: f \in \LLP_{1/w}(G)\}.
\end{equation*}
Note that, since $T$ is idempotent, we have 
\begin{equation*}
    Tf=f,  \quad\forall f \in V.
\end{equation*}

For a relatively separated and $r-$dense set $X=\{x_{i} : i \in I\}$, we define the following sampling operator  
$$\displaystyle \mathcal{S}f(x):= \sum_{ i\in I} f(x_{i}) \frac{\chi_{B(x_i,r)}(x)}{N_{X}(x)}.$$
It can be noted that this operator is well-defined pointwise owing to the fact that the set $X$ is relatively separated. The bounding estimates of the operator $\mathcal{S}$ will lead to a sampling inequality for the space $V.$\par
The following theorem establishes the existence of a stable set of sampling for the space $V.$
\begin{theorem}\label{samplingorlicz}
    Let $T$ be an idempotent integral operator on $L^{\phi}_{1/w}(G)$ with integral kernel $K$ satisfying Schur's condition. Further, let $X=\{x_i: i \in I\}$ be a relatively separated and $r-$dense set in $G.$ Suppose there exists an $r>0$ such that 
    $$\|\operatorname{osc}_{r}(K)\|_S < 1,$$
    then any signal $f \in V$ can be stably reconstructed from its sample values on the set $X.$ Moreover, the sampling inequality 
    \begin{equation}
       \begin{split}
        \frac{1}{L_X}\left( 1-\|\operatorname{osc}_{r}(K)\|_S \right) \|f\|_{\LLP_{1/w}(G)} \lesssim  \|(f(x_i))_{i \in I}\|_{\ell^{\phi}_{1/w}(X)} \lesssim L_{X}^2 \|f\|_{\LLP_{1/w}(G)}\left(  1+ \|\operatorname{osc}_{r}(K)\|_S\right) 
        \end{split}
    \end{equation}
    holds for all $f \in V.$
\end{theorem}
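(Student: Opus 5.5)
Since $f\in V$ satisfies $f=Tf$, we work with the sampling operator $\mathcal{S}$ and compare $\mathcal{S}f$ with $f$ pointwise through the oscillation of the kernel. For $x\in G$ and any $i$ with $x\in B(x_i,r)$ we have $d(x,x_i)<r$. Hence
\[
|f(x_i)-f(x)|=\Big|\int_G \big(K(x_i,y)-K(x,y)\big)f(y)\,d\mu(y)\Big|\le \int_G \operatorname{osc}_r(K)(x,y)\,|f(y)|\,d\mu(y),
\]
where we use the oscillation in the first variable, which is dominated by the several-variable oscillation defined in the preliminaries. Averaging over the $N_X(x)\ge 1$ balls containing $x$ (at least one exists by $r$-density) gives the pointwise bound
\[
|f(x)-\mathcal{S}f(x)|\le \int_G \operatorname{osc}_r(K)(x,y)|f(y)|\,d\mu(y).
\]
The weighted Schur Lemma \ref{weightedorliczschur}, applied to the kernel $\operatorname{osc}_r(K)$, then yields $\|f-\mathcal{S}f\|_{\LLP_{1/w}(G)}\le \|\operatorname{osc}_r(K)\|_S\|f\|_{\LLP_{1/w}(G)}$. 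This is the key estimate.

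\emph{Lower bound.} The triangle inequality gives $\|\mathcal{S}f\|_{\LLP_{1/w}}\ge (1-\|\operatorname{osc}_r(K)\|_S)\|f\|_{\LLP_{1/w}}$. Since $N_X\ge 1$, we have $|\mathcal{S}f|\le \sum_i |f(x_i)|\chi_{B(x_i,r)}$ pointwise, and the Orlicz norm is monotone. Lemma \ref{seqequiv} then bounds $\|\sum_i|f(x_i)|\chi_{B(x_i,r)}\|_{\LLP_{1/w}}$ by a constant times $L_XM_\infty\|(f(x_i))\|_{\ell^\phi_{1/w}(X)}$. This gives the left inequality with the factor $1/L_X$, the remaining constants being absorbed into $\lesssim$.

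\emph{Upper bound.} We apply Lemma \ref{seqequiv} in the other direction: $\|(f(x_i))\|_{\ell^\phi_{1/w}}\lesssim L_X\|\sum_i|f(x_i)|\chi_{B(x_i,r)}\|_{\LLP_{1/w}}$. Next we bound $\sum_i |f(x_i)|\chi_{B(x_i,r)}$ pointwise. We write $|f(x_i)|\le |f(x)|+|f(x_i)-f(x)|$ for $x\in B(x_i,r)$, and at each point $x$ at most $L_X$ terms are nonzero. This gives
\[
\sum_i |f(x_i)|\chi_{B(x_i,r)}(x)\le L_X\Big(|f(x)|+\int_G\operatorname{osc}_r(K)(x,y)|f(y)|\,d\mu(y)\Big).
\]
Taking the $\LLP_{1/w}$ norm and using the Schur bound again gives $L_X(1+\|\operatorname{osc}_r(K)\|_S)\|f\|$, so overall we obtain $L_X^2(1+\|\operatorname{osc}_r(K)\|_S)\|f\|_{\LLP_{1/w}(G)}$.

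\emph{Reconstruction and the main obstacle.} Stable reconstruction follows because $\|I-T\mathcal{S}\|\le \|T\|\,\|\operatorname{osc}_r(K)\|_S$ on $V$. More directly, the lower sampling inequality gives injectivity with stability, and if $\|T\|$ is controlled one gets a Neumann series $f=\sum_n (I-T\mathcal{S})^n T\mathcal{S}f$. I expect the main subtlety to be the first step: making the pointwise identity $f(x_i)=\int K(x_i,y)f(y)\,d\mu(y)$ legitimate for every $i$, not just almost everywhere. This is handled by the preceding proposition, since bounded point evaluation holds because $\|\operatorname{osc}_r(K)\|_S<\infty$. Measurability of $\operatorname{osc}_r(K)$ is also needed so that Schur's lemma applies; it is implicitly assumed by finiteness of its Schur norm.
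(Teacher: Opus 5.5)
Your proposal is correct and follows essentially the same route as the paper. It uses the pointwise bound $|f-\mathcal{S}f|\le \int_G \operatorname{osc}_r(K)(\cdot,y)|f(y)|\,d\mu(y)$ (the paper passes through $\operatorname{osc}_r(f)$ first, which is the same estimate), then the weighted Schur lemma, then Lemma \ref{seqequiv} in both directions to obtain the factors $1/L_X$ and $L_X^2$. The one addition is your Neumann-series remark: it is not in the paper and would need $\|T\|\,\|\operatorname{osc}_r(K)\|_S<1$ rather than just $\|\operatorname{osc}_r(K)\|_S<1$, whereas the paper simply takes stable reconstruction to mean the two-sided sampling inequality.
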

\begin{proof}
    With all the assumptions in place, we estimate the operator $S$ applied on signals $f \in V.$ We begin by considering 
    \begin{equation*}
    \begin{split}
        |f(x)-\mathcal{S}f(x)| & =  \left|f(x)-\sum_{i \in I} f(x_{i}) \frac{\chi_{B(x_i,r)}(x)}{N_{X}(x)}\right| \\
        &= \left|f(x) \sum_{i \in I} \frac{\chi_{B(x_i,r)}(x)}{N_{X}(x)}-\sum_{i \in I} f(x_{i}) \frac{\chi_{B(x_i,r)}(x)}{N_{X}(x)} \right|  \\
        & \leq \sum_{i \in I} |f(x)-f(x_i)| \frac{\chi_{B(x_i,r)}(x)}{N_{X}(x)}. 
    \end{split}
\end{equation*}
Now, for $x \in B(x_i,r)$
\begin{equation*}
    |f(x)-f(x_i)| \leq \operatorname{osc}_{r}(f)(x)
\end{equation*}
and therefore
\begin{equation*}
    \begin{split}
        |f(x)-\mathcal{S}f(x)| & \leq \sum_{i \in I} \operatorname{osc}_{r}(f)(x)  \frac{\chi_{B(x_i,r)}(x)}{N_{X}(x)}  \\
        &= \operatorname{osc}_{r}(f)(x).
    \end{split}
\end{equation*}

For $\lambda >0,$ and by the monotonicity of $\phi$ we have 
\begin{equation*}
    \begin{split}
      \phi \left( \frac{ |f(x)-\mathcal{S}f(x)|}{w(x)\lambda} \right) & \leq \phi \left( \frac{ \operatorname{osc}_{r}({f})(x)} {w(x)\lambda} \right).
    \end{split}
\end{equation*}
Integrating against the variable $x$ over $G,$ we get the following
\begin{equation}\label{diffinnorm}
    \|f- \mathcal{S}f\|_{\LLP_{1/w}(G)} \leq \|\operatorname{osc}_{r}({f})\|_{\LLP_{1/w}(G)}.
    \end{equation}
    In what follows, we reduce the above estimate from the oscillation of the function $f$ to that of the kernel $K$. To do so, we write 
    \begin{equation*}
        \begin{split}
            \operatorname{osc}_{r}({f})(x) &= \sup_{d(y,x)<r} |f(x)-f(y)| \\
            &= \sup_{d(y,x)<r} |Tf(x)-Tf(y)| \\
            &= \sup_{d(y,x)<r } \left | \int_{G}K(y,t)f(t)~ dt - \int_{G} K(x,t) f(t)~ dt \right | \\
            & \leq \sup_{d(y,x)<r} \int_{G} \left|K(y,t)-K(x,t) \right| |f(t)| ~ dt \\
            & \leq \int_{G} \sup_{d(y,x)<r, d(z,t)<r} \left|K(y,z)-K(x,t) \right| |f(t)| ~ dt \\
            &= \int_{G} \operatorname{osc}_{r}({K})(x,t) |f(t)| ~ dt.
        \end{split}
    \end{equation*}
    By Schur's lemma
    \begin{equation*}\label{oscftoosck}
        \|\operatorname{osc}_{r} ({f})\|_{\LLP_{1/w}(G)} \leq \|f\|_{\LLP_{1/w}(G)} \|\operatorname{osc}_{r}({K})\|_{S}.
    \end{equation*}
   Putting this in \eqref{diffinnorm}, we get 
   \begin{equation*}\label{diffinnorm1}
       \|f- \mathcal{S}f\|_{\LLP_{1/w}(G)} \leq \|f\|_{\LLP_{1/w}(G)} \|\operatorname{osc}_r(K)\|_{S}.
   \end{equation*}
    We now turn to estimating $\|\mathcal{S}f\|_{\LLP_{1/w}(G)}.$ But with the estimates proved in Lemma \ref{seqequiv}, this is in fact straightforward, and we have
    \begin{equation*}
        \|\mathcal{S}f\|_{\LLP_{1/w}(G)} \leq \left\|\sum_{i \in I} |f(x_i)| \chi_{B(x_i,r)}\right\|_{\LLP_{1/w}(G)} \lesssim L_{X}\|(f(x_i))_{i \in I}\|_{\ell^{\phi}_{1/w}(X)}.
    \end{equation*}
    To get the lower sampling bound, one now observes that
    \begin{equation*}
        \begin{split}
            \|\mathcal{S}f\|_{\LLP_{1/w}(G)} &\geq \|f\|_{\LLP_{1/w}(G)} -\|\mathcal{S}f-f\|_{\LLP_{1/w}(G)} \\
            &\geq \left(1-\|\operatorname{osc}_{r}(K)\|_{S} \right) \|f\|_{\LLP_{1/w}(G)}.
        \end{split}
    \end{equation*}
    Combining, we get 
    \begin{equation*}
        \frac{1}{L_X}\left( 1-\|\operatorname{osc}_{r}(K)\|_S \right) \|f\|_{\LLP_{1/w}(G)} \lesssim \|(f(x_i))_{i \in I}\|_{\ell^{\phi}_{1/w}(X)}. 
    \end{equation*}
    Now, for $x \in B(x_i,r)$
    $$|f(x_i)| \leq |f(x)| + \operatorname{osc}_{r}(f)(x)$$
    which gives
    $$\sum_{i \in I} |f(x_i)| \frac{\chi_{B(x_i,r)}(x)}{N_{X}(x)} \leq  |f(x)|+ \operatorname{osc}_{r}(f)(x)$$
    and therefore 
    \begin{equation*}
        \begin{split}
           \left \|\sum_{i \in I} |f(x_i)| \frac{\chi_{B(x_i,r)}}{N_{X}}\right\|_{\LLP_{1/w}(G)} &\leq \|f\|_{\LLP_{1/w}(G)} + \|\operatorname{osc}_{r}(f)\|_{\LLP_{1/w}(G)} \\ 
            &\leq \|f\|_{\LLP_{1/w}(G)} \left ( 1+ \|\operatorname{osc}_{r}(K)\|_{S} \right).
        \end{split}
    \end{equation*}
    Again by \eqref{seqequiveqn}, 
    \begin{equation*}
        \begin{split}
            \left \|\sum_{i \in I} |f(x_i)| \frac{\chi_{B(x_i,r)}}{N_{X}}\right\|_{\LLP_{1/w}(G)} &\geq \frac{1}{L_{X}} \left\|\sum_{i \in I} |f(x_i)| \chi_{B(x_i,r)}\right\|_{\LLP_{1/w}(G)}  \\
            & \gtrsim \frac{1}{L_{X}^2} \|(f(x_i))_{i \in I}\|_{\ell^{\phi}_{1/w}(X)}.
        \end{split}
    \end{equation*}
   Hence, we have  
   \begin{equation*}
        \frac{1}{L_X}\left( 1-\|\operatorname{osc}_{r}(K)\|_S \right) \|f\|_{\LLP_{1/w}(G)} \lesssim  \|(f(x_i))_{i \in I}\|_{\ell^{\phi}_{1/w}(X)} \lesssim L_{X}^2 \|f\|_{\LLP_{1/w}(G)}\left(  1+ \|\operatorname{osc}_{r}(K)\|_S\right)
   \end{equation*}
   which concludes the proof.
\end{proof}

In many practical applications, it is not feasible to measure the instantaneous values of a signal. Instead, one records its average values over suitable spatial regions. Motivated by this, we consider the problem of average sampling of non-decaying signals within the same framework. The average sampling theorem reads as follows.
\begin{theorem}\label{avgsamplingorlicz}
    Let $T$ be an idempotent integral operator on $L^{\phi}_{1/w}(G)$ with integral kernel $K$ satisfying Schur's condition. Further, let $X=\{x_i: i \in I\}$ be a relatively separated and $r-$dense set in $G.$ Assume there exists a family of averaging functions $\{\eta_i: i \in I\}$ satisfying 
    \begin{itemize}
        \item $0 \leq \eta_i \leq 1$ for all $i \in I,$
        \item $\displaystyle\int_{G} \eta_i =1$ for all $i \in I,$ and
        \item for all $i \in I,$ $\operatorname{supp}(\eta_i) \subset B(x_i,r).$
    \end{itemize}
    If there is a $r>0$ such that 
    \begin{equation}
        \|\operatorname{osc}_{2r} (K)\|_{S} < 1,
    \end{equation}
    then any signal $f \in V$ can be recovered from its averages $\displaystyle \left\{\langle f,\eta_i \rangle=\int_{G} f(t) \eta_{i}(t) ~dt: i \in I\right\}.$ Moreover, the following average sampling inequality 
    \begin{equation}
    \frac{1}{L_X} (1-\|\operatorname{osc}_{2r}(K)\|_S) \|f\|_{\LLP_{1/w}(G)} \lesssim \|(\langle f,\eta_i \rangle)_{i \in I}\|_{\ell^{\phi}_{1/w}(X)} \lesssim  L_{X}^2 \|f\|_{\LLP_{1/w}(G)}\left(  1+ \|\operatorname{osc}_{2r}(K)\|_S\right)
        \end{equation}
        holds for all $f \in V.$
\end{theorem}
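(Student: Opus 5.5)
We mimic the proof of Theorem \ref{samplingorlicz}, replacing point values by averages. Define the average sampling operator
$$\mathcal{A}f(x):=\sum_{i\in I}\langle f,\eta_i\rangle\,\frac{\chi_{B(x_i,r)}(x)}{N_X(x)},$$
which is well defined pointwise since $X$ is relatively separated. Because $\sum_i \chi_{B(x_i,r)}/N_X\equiv 1$ (by $r$-density) and $\int_G\eta_i=1$, we can write
$$f(x)-\mathcal{A}f(x)=\sum_{i\in I}\frac{\chi_{B(x_i,r)}(x)}{N_X(x)}\int_G \big(f(x)-f(t)\big)\eta_i(t)\,dt .$$
For $x\in B(x_i,r)$ and $t\in\operatorname{supp}\eta_i\subset B(x_i,r)$ we have $d(x,t)<2r$, so $|f(x)-f(t)|\le \operatorname{osc}_{2r}(f)(x)$. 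Since $\eta_i\ge 0$ has integral one, this gives $|f(x)-\mathcal{A}f(x)|\le \operatorname{osc}_{2r}(f)(x)$. This is the only place where the radius doubles, and it explains the hypothesis on $\operatorname{osc}_{2r}(K)$.

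Next we pass from the oscillation of $f$ to that of $K$. Using $f=Tf$ for $f\in V$, the same chain as in Theorem \ref{samplingorlicz} gives $\operatorname{osc}_{2r}(f)(x)\le\int_G\operatorname{osc}_{2r}(K)(x,t)|f(t)|\,dt$. The weighted Schur Lemma \ref{weightedorliczschur} then yields $\|\operatorname{osc}_{2r}(f)\|_{\LLP_{1/w}(G)}\le\|\operatorname{osc}_{2r}(K)\|_S\|f\|_{\LLP_{1/w}(G)}$, and by monotonicity of $\phi$,
$$\|f-\mathcal{A}f\|_{\LLP_{1/w}(G)}\le\|\operatorname{osc}_{2r}(K)\|_S\,\|f\|_{\LLP_{1/w}(G)} .$$

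For the lower bound, $N_X\ge1$ gives $|\mathcal{A}f|\le\sum_i|\langle f,\eta_i\rangle|\chi_{B(x_i,r)}$. Lemma \ref{seqequiv} then gives $\|\mathcal{A}f\|_{\LLP_{1/w}(G)}\lesssim L_X\|(\langle f,\eta_i\rangle)\|_{\ell^\phi_{1/w}(X)}$. Combining this with the triangle inequality $\|\mathcal{A}f\|\ge\|f\|-\|f-\mathcal{A}f\|$ gives the left inequality.

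For the upper bound, fix $x\in B(x_i,r)$. Then $|\langle f,\eta_i\rangle|\le\int_G|f(t)|\eta_i(t)\,dt\le |f(x)|+\operatorname{osc}_{2r}(f)(x)$, again because $d(x,t)<2r$ on the support. Summing with the weights $\chi_{B(x_i,r)}/N_X$ gives
$$\sum_i|\langle f,\eta_i\rangle|\frac{\chi_{B(x_i,r)}}{N_X}\le|f|+\operatorname{osc}_{2r}(f),$$
whose $\LLP_{1/w}$ norm is at most $(1+\|\operatorname{osc}_{2r}(K)\|_S)\|f\|_{\LLP_{1/w}(G)}$. Since $N_X\le L_X$, the left-hand side dominates $L_X^{-1}\sum_i|\langle f,\eta_i\rangle|\chi_{B(x_i,r)}$. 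Lemma \ref{seqequiv} then converts this to $L_X^{-2}\|(\langle f,\eta_i\rangle)\|_{\ell^\phi_{1/w}(X)}$, which gives the right inequality.

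Recovery follows from the lower bound: $f\mapsto(\langle f,\eta_i\rangle)$ is injective on $V$. Alternatively, one can invert $T\mathcal{A}$ on $V$ by a Neumann series, since $\|I-T\mathcal{A}\|\le\|T\|\,\|\operatorname{osc}_{2r}(K)\|_S$ on $V$ when this quantity is less than one.

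The main point requiring care is the averaging estimate: the pointwise bound must hold uniformly in $x$ using only $\eta_i\ge0$, $\int\eta_i=1$ and the support condition. The condition $\eta_i\le1$ is not even needed. Everything else transfers verbatim from Theorem \ref{samplingorlicz}.
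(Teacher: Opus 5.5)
Your proof is correct and is essentially the paper's argument. The paper defines the same operator $\mathcal{S}_{avg}f=\sum_{i}\langle f,\eta_i\rangle\chi_{B(x_i,r)}/N_X$ and says only to repeat the proof of Theorem~\ref{samplingorlicz} with minor modifications; you have supplied exactly those modifications, the key one being that $x,t\in B(x_i,r)$ forces $d(x,t)<2r$, which gives $|f-\mathcal{S}_{avg}f|\le\operatorname{osc}_{2r}(f)$ and $|\langle f,\eta_i\rangle|\le|f(x)|+\operatorname{osc}_{2r}(f)(x)$ and explains the hypothesis on $\operatorname{osc}_{2r}(K)$.
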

\begin{proof}
    Analogous to the case of pointwise sampling, we define the average sampling operator as 
    $$\mathcal{S}_{avg}f(x):= \sum_{i \in I} \langle f, \eta_{i} \rangle \frac{\chi_{B(x_i,r)}(x)}{N_{X}(x)},$$
   for $f \in V.$ \\
  The result follows by repeating the arguments in the proof of Theorem \ref{samplingorlicz}, with only minor modifications to accommodate the present setting.    
\end{proof}

\subsection{Random Sampling of signals in Orlicz space}
In this subsection, we will consider the sampling for the functions whose norm content is essentially located on a compact set. In particular, we consider the following: \\
For $\delta >0,$ define the set of essentially concentrated functions as follows 
$$V_{N,\delta}=\{f \in V: \|f\|_{\LLP_{1/w}(G_N)} \geq (1-\delta) \|f\|_{\LLP_{1/w}(G)} \},$$
where $G_N$ is the closure of the ball of radius $N$ centered at the identity $e.$
\begin{definition}[Covering radius]
    Let $X=\{x_1, x_2, \ldots, x_n\}$ be a subset of $G_{N}.$ The covering radius of $X$ for $G_{N}$ is defined as the following
$$r_{X}(n):= \inf\{r >0: G_{N} \subset \bigcup_{j=1}^{n} B(x_{j},r)\}.$$
\end{definition}
 Note that $G_N$ being a compact set ensures that the covering radius is finite.
Since the functions in $V_{N,\delta}$ have their norm essentially concentrated on a compact subset, it is natural to expect that they can be stably recovered from their samples taken on that subset. The following result makes this heuristic precise. 
\begin{lemma} \label{Orliczcoveringradiussampling}
    Let $T$ be an idempotent integral operator on $L^{\phi}_{1/w}(G)$ with integral kernel $K$ satisfying Schur's condition. Suppose there exists an $r>0$ such that 
    \begin{equation}
        \|\operatorname{osc}_{r}(K)\|_S < 1-\delta ,
    \end{equation}
    and let $X=\{x_i: i=1, 2, \ldots,n\}$ be a finite subset of distinct points in $G_N$ such that $\{B(x_i,r): i=1,2, \ldots, n\}$ covers $G_N.$ Then, for all $f \in V_{N,\delta}$ the following inequality holds 
    \begin{multline}
        (1-\delta-\|\operatorname{osc}_{r}(K)\|_S)\|f\|_{\LLP_{1/w}(G)} \leq \|(f(x_i))_{i=1}^{n}\|_{\ell^{\phi}_{1/w}(X)} \\ \leq \frac{1}{\mu(B(e,\zeta))} \|f\|_{\LLP_{1/w}(G)} (1+ \|\operatorname{osc}_{r}(K)\|_{S}),
    \end{multline}
   where $\zeta$ depends on the separating distance between the sampling points.
\end{lemma}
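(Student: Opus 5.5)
I would follow the proof of Theorem \ref{samplingorlicz}, now with the finite set $X\subset G_N$ and a local version of the sampling operator. Set
$$\mathcal{S}_N f(x):=\sum_{i=1}^{n} f(x_i)\,\frac{\chi_{B(x_i,r)}(x)}{N_X(x)},\qquad x\in G_N .$$
On $G_N$ we have $N_X\ge 1$ because the balls cover $G_N$. The pointwise argument from the proof of Theorem \ref{samplingorlicz} then gives $|f(x)-\mathcal{S}_Nf(x)|\le \operatorname{osc}_r(f)(x)$ for $x\in G_N$. Since $\phi$ is monotone and the Luxemburg norm over $G_N$ is dominated by the norm over $G$, this yields
$$\|f-\mathcal{S}_Nf\|_{\LLP_{1/w}(G_N)}\le \|\operatorname{osc}_r(f)\|_{\LLP_{1/w}(G)}\le \|\operatorname{osc}_r(K)\|_S\,\|f\|_{\LLP_{1/w}(G)} .$$
The last inequality uses $f=Tf$ together with the pointwise bound $\operatorname{osc}_r(f)\le \int\operatorname{osc}_r(K)(\cdot,t)|f(t)|\,dt$ and the weighted Schur Lemma \ref{weightedorliczschur}, exactly as before.

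For the lower bound, the concentration hypothesis $f\in V_{N,\delta}$ and the triangle inequality on $G_N$ give
$$\|\mathcal{S}_Nf\|_{\LLP_{1/w}(G_N)}\ge \|f\|_{\LLP_{1/w}(G_N)}-\|f-\mathcal{S}_Nf\|_{\LLP_{1/w}(G_N)}\ge (1-\delta-\|\operatorname{osc}_r(K)\|_S)\|f\|_{\LLP_{1/w}(G)}.$$
Next I need $\|\mathcal{S}_Nf\|_{\LLP_{1/w}(G_N)}\lesssim \|(f(x_i))\|_{\ell^\phi_{1/w}(X)}$. This follows from $|\mathcal{S}_Nf|\le\sum_i|f(x_i)|\chi_{B(x_i,r)}$ and the upper half of Lemma \ref{seqequiv}. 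The convexity argument in that lemma only requires $N_X\le n<\infty$ and the comparison $w(x_i)\le M_\infty w(x)$ on $B(x_i,r)$, so it applies directly to the finite set $X$.

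For the upper bound, I would use the point from the statement: the distinct points $x_i$ have a positive separation, so there is $\zeta\in(0,r]$ such that the balls $B(x_i,\zeta)$ are pairwise disjoint. On $B(x_i,\zeta)$ I bound $|f(x_i)|\le |f(x)|+\operatorname{osc}_r(f)(x)$ and $w(x)\le M_\infty w(x_i)$. Then
$$\sum_{i}\phi\!\Big(\tfrac{|f(x_i)|}{\lambda w(x_i)}\Big)\le \frac{1}{\mu(B(e,\zeta))}\sum_i\int_{B(x_i,\zeta)}\phi\!\Big(M_\infty\tfrac{|f(x)|+\operatorname{osc}_r(f)(x)}{\lambda w(x)}\Big)d\mu(x),$$
using the left invariance $\mu(B(x_i,\zeta))=\mu(B(e,\zeta))$. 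Disjointness bounds the right side by an integral over $G$. Taking $\lambda$ to be a multiple of $\|f\|_{\LLP_{1/w}}+\|\operatorname{osc}_r(f)\|_{\LLP_{1/w}}$ and applying Schur again gives the right-hand inequality, with constant of order $1/\mu(B(e,\zeta))$ up to the suppressed factors $M_\infty$ and the $\Delta_2$ constant.

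I expect this last step to be the main obstacle. Converting a factor $1/\mu(B(e,\zeta))$ inside the modular into a factor in the Luxemburg norm is not linear: by convexity, $\sum\phi\le A$ with $A\ge1$ only gives norm $\le A\lambda$. So I would first try the convexity bound $\phi(t/A)\le\phi(t)/A$ for $A\ge1$, which yields the constant $\max\{1,1/\mu(B(e,\zeta))\}$. This matches the stated form of the inequality, with the implicit absolute constants absorbed as elsewhere in the paper.
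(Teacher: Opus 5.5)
Your proof is correct and follows the same architecture as the paper's proof. That architecture has four pieces: a local approximation operator on $G_N$ whose error is dominated pointwise by $\operatorname{osc}_r(f)$; the weighted Schur bound for $\operatorname{osc}_r(f)$ via $f=Tf$; concentration plus the triangle inequality for the lower bound; and the estimate $|f(x_i)|\le|f(x)|+\operatorname{osc}_r(f)(x)$ integrated over pairwise disjoint sets of measure at least $\mu(B(e,\zeta))$ for the upper bound.

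You differ only in the auxiliary sets. The paper extracts a disjoint partition $\{B_i\}$ of $G_N$ with $x_i\in B_i\subset B(x_i,r)\cap G_N$, each $B_i$ containing a ball $B(x_i',\zeta)$, and uses $\mathcal{A}f=\sum_i f(x_i)\chi_{B_i}$ for both bounds. You instead keep the overlapping weights $\chi_{B(x_i,r)}/N_X$ for the lower bound, and use $B(x_i,\zeta)$ with $\zeta\le\min\{r,s/2\}$ ($s$ the minimal separation) for the upper bound.

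Since the upper bound is a norm over all of $G$, your balls need not lie in $G_N$. You therefore avoid constructing cells inside $G_N$ containing balls of uniform radius, and $\zeta$ is tied explicitly to the separation. Your handling of $1/\mu(B(e,\zeta))$ through $\phi(t/A)\le\phi(t)/A$ for $A\ge 1$ gives the constant $M_\infty\max\{1,1/\mu(B(e,\zeta))\}$, and no $\Delta_2$ constant is needed. This is more careful than the paper's claim $\mu(B(e,\zeta))\|(f(x_i))\|_{\ell^{\phi}_{1/w}(X)}\le\|\sum_i|f(x_i)|\chi_{B_i}\|_{\LLP_{1/w}(G)}$, which as written needs $\mu(B(e,\zeta))\le 1$ and an extra factor $M_\infty$.

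One step should be tightened. Dominating $|\mathcal{S}_Nf|$ by $\sum_i|f(x_i)|\chi_{B(x_i,r)}$ and quoting the upper half of Lemma \ref{seqequiv} discards the factor $1/N_X$. It thereby puts $L_X$ into the lower constant. For a finite set $L_X$ can be as large as $n$, and for fixed $r$ and i.i.d.\ uniform samples in $G_N$ it grows linearly in $n$ almost surely. The stated lower bound, and its use in Theorem \ref{randomsamplingorlicz}, must not depend on the configuration beyond the covering property.

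The repair is immediate. For $x\in G_N$ the weights $\chi_{B(x_i,r)}(x)/N_X(x)$ sum to $1$, so Jensen's inequality together with $w(x_i)\le M_\infty w(x)$ on $B(x_i,r)$ gives
\[
\int_{G_N}\phi\Big(\frac{|\mathcal{S}_Nf(x)|}{\lambda w(x)}\Big)\,d\mu(x)\le \mu(B(e,r))\sum_{i=1}^{n}\phi\Big(\frac{M_\infty|f(x_i)|}{\lambda w(x_i)}\Big).
\]
Hence $\|\mathcal{S}_Nf\|_{\LLP_{1/w}(G_N)}\le M_\infty\max\{1,\mu(B(e,r))\}\,\|(f(x_i))\|_{\ell^{\phi}_{1/w}(X)}$, with no $L_X$. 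Alternatively, use the paper's disjoint cells. With this change your argument proves the lemma with constants depending only on $r$, $w$ and $\zeta$, the same level of precision as the paper's suppressed constants.
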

\begin{proof}
    Let $r>0$ be such that the assumption above on the kernel $K$ is satisfied. Take $X=\{x_1,x_2,\ldots, x_n\}$ to be a set of points from $G_N$ such that it is covered by $\{B(x_i,r): i=1,2,\ldots,n\}.$ From this collection, we can extract a collection $\{B_i: i=1,2,\ldots,n\}$ such that
    \begin{enumerate}
        \item there is an $\zeta > 0$ such that $B(x_i', \zeta) \subset B_i \subset B(x_i,r) \cap G_N,$ and $x_i \in B_i$ for all $i=1,2,\ldots,n,$
        \item $B_i \cap B_j =\varnothing $ for all $i \neq j,$
        \item $G_N = \bigcup_{i=1}^{n} B_{i}.$
    \end{enumerate}
    Now, define the sampling operator as
    \begin{equation*}
        \mathcal{A}f(x):= \sum_{i=1}^{n} f(x_i) \chi_{B_i}(x)
    \end{equation*}
    for $x \in G_N$ and $f \in V_{N,\delta}.$ \\
    By Lemma \ref{seqequiv}, 
    \begin{equation*}
    \|\mathcal{A}f\|_{L^{\phi}_{1/w}(G_{N})} \leq \|(f(x_i))_{i=1}^{n}\|_{\ell^{\phi}_{1/w}(X)}.
\end{equation*}
Also, as previously deduced, one can see that 
\begin{equation*}
    \|\mathcal{A}f-f\|_{\LLP_{1/w}(G_N)} \leq \|\operatorname{osc}_{r}(f)\|_{\LLP_{1/w}(G_N)} \leq \|f\|_{\LLP_{1/w}(G)} \|\operatorname{osc}_{r}(K)\|_{S}.
\end{equation*}
Thus, 
\begin{equation*}
    \begin{split}
     \|(f(x_i))_{i=1}^{n}\|_{\ell^{\phi}_{1/w}(X)} &\geq  \|\mathcal{A}f\|_{L^{\phi}_{1/w}(G_{N})} \\
    & \geq \|f\|_{\LLP_{1/w}(G_N)} - \|\mathcal{A}f-f\|_{\LLP_{1/w}(G_N)} \\
    &\geq (1-\delta)\|f\|_{\LLP_{1/w}(G)}- \|f\|_{\LLP_{1/w}(G)}\|\operatorname{osc}_{r}(K)\|_{S} \\
    &=(1-\delta-\|\operatorname{osc}_{r}(K)\|_{S}) \|f\|_{\LLP_{1/w}(G)}, \quad \forall f \in V_{N,\delta}.
    \end{split}
\end{equation*}
Now for $x \in G_N$
\begin{equation*}
    \sum_{i=1}^{n} |f(x_i)| \chi_{B_{i}}(x) \leq |f(x)|+ \operatorname{osc}_{r}(f)(x).
\end{equation*}
Therefore, 
\begin{align*}
    \mu(B(e,\zeta)) \|(f(x_i))_{i=1}^{n}\|_{\ell^{\phi}_{1/w}(X)} &\leq \left\| \sum_{i=1}^{n} |f(x_i)| \chi_{B_i}\right\|_{\LLP_{1/w}(G)} \\
    &\leq \|f\|_{\LLP_{1/w}(G)} (1+ \|\operatorname{osc}_{r}(K)\|_S) \\
    &\leq (2-\delta) \|f\|_{\LLP_{1/w}(G)},
\end{align*}
which concludes the proof.
\end{proof} 
To prove the random sampling result, we assume an additional regularity condition on the integral kernel $K.$ In particular, we assume 
\begin{equation}
\label{kernelregularity}
    \kappa_{\theta}= \|K\|_{S} + \sup_{0<r\leq 1}r^{-\theta} \|\operatorname{osc}_{r}(K)\|_{S} < \infty
\end{equation}
for some $\theta \in (0,1].$ Such an assumption is standard in the theory of random sampling; see \cite{QSunRandomACHA}.
\begin{remark} \label{remarkregradius}
    If, in the previous lemma, we additionally assume that the kernel satisfies the regularity condition (\ref{kernelregularity}), then any finite set of points
$$X=\{x_1,x_2,\ldots,x_n\}$$
with covering radius
\begin{equation}
\label{regularityradiusrelation}
    r_X(n)<\left(\frac{1-\delta}{\kappa_\theta}\right)^{1/\theta}
\end{equation}
yields the following sampling inequality 
\begin{multline}
    (1-\delta-r_{X}(n)^{\theta} \kappa_{\theta})\|f\|_{\LLP_{1/w}(G)} \leq \|(f(x_i))_{i=1}^{n}\|_{\ell^{\phi}_{1/w}(X)} \\ \leq \frac{1}{B(e,\zeta)} \|f\|_{\LLP_{1/w}(G)} (1+ r_{X}(n)^{\theta} \kappa_{\theta})
\end{multline}
for all $f \in V_{N,\delta},$ and $\zeta$ depending on the separating distance between the sampling points . This essentially says that if the covering radius is small enough, then one gets a sampling inequality for the $\delta-$concentrated functions on $G_N.$ 
\end{remark}
\begin{remark}
For functions whose norm is essentially concentrated on a compact subset, the upper sampling inequality follows readily from the boundedness of the pointwise evaluation functionals. Thus, the main difficulty lies in establishing the corresponding lower sampling inequality, which is essential for stable recovery from the samples. The following theorem establishes a random sampling inequality by controlling the covering radius of the randomly chosen sampling points. In particular, the lower sampling inequality is obtained from this covering-radius estimate, while the upper sampling inequality follows directly from the boundedness of the pointwise evaluation functionals. 
\end{remark}

 \begin{theorem}\label{randomsamplingorlicz}
Let $T$ be an idempotent integral operator with integral kernel $K$ satisfying the condition (\ref{kernelregularity}). Assume $X=\{x_1,x_2,\ldots,x_n\}$ are i.i.d. random variables distributed uniformly over the set $G_N$ with respect to the probability measure $\displaystyle\frac{\chi_{G_N}(x)}{\mu(G_N)} d\mu(x)$. Then for $0 <\epsilon< 1-\delta,$ 
\begin{equation}
   (1-\delta-\epsilon)\|f\|_{\LLP_{1/w}(G)} \leq \|(f(x_i))_{i=1}^{n}\|_{\ell^{\phi}_{1/w}(X)} \leq c_n D_N ||f\|_{\LLP_{1/w}(G)}
\end{equation}
holds for all $f \in V_{N,\delta}$ with probability atleast 
\begin{equation}
    1- \frac{\mu(G_N)}{\mu(B(e,\epsilon^{1/\theta} \kappa^{-1/\theta}/4))} \exp\left(-n\frac{\mu(B(e,\epsilon^{1/\theta} \kappa^{-1/\theta}/2))}{\mu(G_N)}\right),
\end{equation}
where  $D_N = \sup_{x \in G_N} \|K(x,\cdot)\|_{L^{\phi^*}_{w}(G)}$ and $c_n$ is a constant depending on the number of points $n$.
 \end{theorem}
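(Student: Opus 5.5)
The plan is to separate the lower and upper inequalities, following the preceding remark. The lower bound comes from a probabilistic estimate on the covering radius, fed into Lemma \ref{Orliczcoveringradiussampling} and Remark \ref{remarkregradius}. The upper bound is deterministic and comes from bounded point evaluations.

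\textbf{Choice of radius.} Set $r_0=\epsilon^{1/\theta}\kappa_\theta^{-1/\theta}$. By the regularity condition (\ref{kernelregularity}), for $r\le r_0\le 1$ we have
$$\|\operatorname{osc}_{r}(K)\|_S\le r^\theta\kappa_\theta\le\epsilon<1-\delta.$$
So whenever the random set satisfies $r_X(n)<r_0$, the balls $B(x_i,r_0)$ cover $G_N$. Lemma \ref{Orliczcoveringradiussampling} then gives
$$\|(f(x_i))\|_{\ell^\phi_{1/w}(X)}\ge(1-\delta-\epsilon)\|f\|_{L^\phi_{1/w}(G)}\qquad\text{for all } f\in V_{N,\delta}.$$
It therefore suffices to show $\mathbb P\big(r_X(n)\ge r_0\big)$ is at most the stated failure probability.

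\textbf{Covering estimate.} Take a maximal $r_0/4$-separated subset $\{z_1,\dots,z_m\}$ of $G_N$, i.e.\ a maximal family of centres whose balls $B(z_k,r_0/4)$ are pairwise disjoint. The balls $B(z_k,r_0/2)$ then cover $G_N$. Disjointness and left invariance of $\mu$ give
$$m\le \frac{\mu(G_N^{+})}{\mu(B(e,r_0/4))}.$$
Here $G_N^{+}$ is a slight enlargement of $G_N$; I will absorb it into $\mu(G_N)$, or restrict to balls whose intersection with $G_N$ has measure comparable to $\mu(B(e,\cdot))$.

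If every ball $B(z_k,r_0/2)$ contains some sample point, then every $x\in G_N$ lies within $r_0/2$ of some $z_k$, which lies within $r_0/2$ of some $x_i$. Hence $r_X(n)<r_0$. For a fixed $k$, the probability that no sample lands in $B(z_k,r_0/2)$ is
$$\Big(1-\tfrac{\mu(B(z_k,r_0/2)\cap G_N)}{\mu(G_N)}\Big)^n\le \exp\!\Big(-n\,\tfrac{\mu(B(e,r_0/2))}{\mu(G_N)}\Big),$$
using $1-t\le e^{-t}$ and invariance. A union bound over the $m$ centres gives exactly the claimed failure probability.

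\textbf{Main obstacle.} The difficulty is the boundary effect: the bound above needs $\mu(B(z_k,r_0/2)\cap G_N)\gtrsim\mu(B(e,r_0/2))$ for centres near $\partial G_N$. I would handle it by choosing $z_k\in G_N$ and either assuming a doubling or interior-cone type property of the metric balls, or absorbing the loss into constants.

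\textbf{Upper bound.} For $f\in V$ we have $f=Tf$, so Hölder's inequality gives
$$\frac{|f(x_i)|}{w(x_i)}\le \frac{2}{w(x_i)}\|K(x_i,\cdot)\|_{L^{\phi^*}_{w}(G)}\|f\|_{L^\phi_{1/w}(G)}\le 2D_N\|f\|_{L^\phi_{1/w}(G)},$$
after normalizing by the weight, which is bounded on $G_N$. $D_N$ is finite by the argument of the pointwise-evaluation proposition, since $K(x,\cdot)w\in L^1\cap L^\infty$ uniformly on the compact set $G_N$. With the $n$ coordinates all bounded this way, the $\ell^\phi$ norm of a finite sequence is at most $c_n$ times its sup, with $c_n=1/\phi^{-1}(1/n)$. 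This yields the upper inequality deterministically, so it holds on the same event.
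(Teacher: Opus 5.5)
Your proposal is essentially the paper's proof, and the ingredients line up one for one:
\begin{itemize}
\item the same choice $\alpha=\epsilon^{1/\theta}\kappa_\theta^{-1/\theta}$, fed into Remark~\ref{remarkregradius};
\item the same $\alpha/2$-net obtained from a maximal packing;
\item the same per-ball miss probability, bounded via $1-t\le e^{-t}$ and a union bound;
\item for the upper inequality, your H\"older argument reproduces the paper's constant $c_n=\frac{2}{\sqrt{w(e)}}\|(1,\dots,1)\|_{\ell^{\phi}}=\frac{2}{\sqrt{w(e)}\,\phi^{-1}(1/n)}$. Here the factor $1/w(x_i)$ is at most $1/\sqrt{w(e)}\le 1$ by symmetry and submultiplicativity of $w$, not merely because $w$ is bounded on $G_N$.
\end{itemize}

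The boundary effect and the enlargement $G_N^{+}$ that you flag are genuine, but the paper's proof passes over them silently. It simply asserts that the miss probability equals $\bigl(1-\mu(B(e,\alpha/2))/\mu(G_N)\bigr)^n$ and that $|\Gamma|\le\mu(G_N)/\mu(B(e,\alpha/4))$. So in both arguments the exact constants in the stated probability rest on the same implicit assumption that the net balls lie inside $G_N$ (or on a corkscrew-type condition), not on anything you are missing relative to the paper.
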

\begin{proof}
Let $X=\{x_1,x_2,\ldots,x_n\}$ be a collection of i.i.d. random variables uniformly distributed over the set $G_N$, $\alpha > 0$ and define the event $E=\{r_{X}(n) \geq \alpha\}.$ By Remark \ref{remarkregradius}, if the covering radius of $X$ satisfies
$$r_X(n)<\left(\frac{1-\delta}{\kappa_\theta}\right)^{1/\theta},$$
then $X$ is a stable sampling set for $V_{N,\delta}$.

The proof is motivated from \cite{GrochenigBassRandomMultiTrig}. Let $\Gamma \subset G_N$ be an $\alpha/2$-net of $G_N$, whose cardinality does not exceed $\frac{\mu(G_N)}{\mu(B(e,\alpha/4))}.$ The existence of such a cover is ensured by a standard maximal packing argument. Then the collection
$$\mathcal{C} = \{B(\gamma,\alpha/2): \gamma \in \Gamma\}$$
forms a cover of $G_N$.

Assume that $E$ is true for some $ 0<\alpha <1 .$ Since $r_X(n)\geq\alpha$, there exists a point $x \in G_N$ whose distance from every sample point is at least $\alpha$ or exceeds it. Let $\gamma \in \Gamma$ be such that $x \in B(\gamma,\alpha/2)$. Then
$$B(\gamma,\alpha/2)\cap X=\varnothing,$$
for otherwise there would exist $x_i \in X$ with
$$d(x,x_i)\leq d(x,\gamma)+d(\gamma,x_i)<\alpha,$$
contradicting the assumption that $r_X(n)\geq\alpha$. Hence
$$E\subset\left\{\exists\,\gamma\in\Gamma:B(\gamma,\alpha/2)\cap X=\varnothing\right\}.$$

Since the random variables are i.i.d. and uniformly distributed, we have
$$\mathbb{P}\left(\{B(\gamma,\alpha/2)\cap X=\varnothing\}\right)=\left(1-\frac{\mu(B(e,\alpha/2))}{\mu(G_N)}\right)^n.
    $$
The desired estimate now follows by applying the union bound 
\begin{align*}
    \mathbb{P}(E) &=\mathbb{P}(\{r_X(n) \geq \alpha\}) \\
    & \leq \mathbb{P}(\left\{\exists\,\gamma\in\Gamma:B(\gamma,\alpha/2)\cap X=\varnothing\right\}) \\
    &\leq \sum_{\gamma \in \Gamma} \mathbb{P}\left(\{B(\gamma,\alpha/2)\cap X=\varnothing\}\right) \\
    &\leq \frac{\mu(G_N)}{\mu(B(e,\alpha/4))}\left(1-\frac{\mu(B(e,\alpha/2))}{\mu(G_N)}\right)^n \\
    &\leq \frac{\mu(G_N)}{\mu(B(e,\alpha/4))} \exp\left(-n\frac{\mu(B(e,\alpha/2))}{\mu(G_N)}\right).
\end{align*}
By Remark \ref{remarkregradius}, the sampling inequality holds true if $r_{X}(n) < \left(\frac{1-\delta}{\kappa_{\theta}}\right)^{1/\theta}.$ In particular, for the choice $\alpha=\epsilon^{1/\theta} \kappa^{-1/\theta}$
the sampling inequality holds with probability at least
\begin{equation*}
    1- \frac{\mu(G_N)}{\mu(B(e,\epsilon^{1/\theta} \kappa^{-1/\theta}/4))} \exp\left(-n\frac{\mu(B(e,\epsilon^{1/\theta} \kappa^{-1/\theta}/2))}{\mu(G_N)}\right).
\end{equation*}
\end{proof} 
We would like to mention in passing that the constant $c(n) $ in the previous theorem can be made more explicit. In fact, one can take it to be $$c_n = \frac{2}{\sqrt{w(e)}}  \|\underbrace{(1,1,\cdots,1)}_{n-terms}\|_{\ell^{\phi}(X)}.$$
\begin{remark}
   The random sampling inequality \eqref{randomsamplingorlicz} holds with probability at least $1-\Upsilon$ whenever
$$n >\frac{\mu(G_N)}{\mu(B(e,\epsilon^{1/\theta}\kappa^{-1/\theta}/2))}\log\!\left(\frac{\mu(G_N)}{\Upsilon\,\mu(B(e,\epsilon^{1/\theta}\kappa^{-1/\theta}/4))}\right).$$
In particular, the required number of random samples is of order $\mathcal{O}\!\left(\mu(G_N)\log\mu(G_N)\right).$ This, in particular, coincides with the order of points known for the classical case \cite{PatelBajpeyiSivaRelevantOrlicz, GrochenigBassRandomMultiTrig, QSunRandomACHA}.
\end{remark}
\begin{remark}
   A more explicit bound for this probability can be given if the measure is known to be a \emph{Ahlfors b-regular}. A measure $\nu$ on a space $Z$ is said to be Ahlfors b-regular if there are positive constants $D_1, D_2$ such that 
   \begin{equation}
       D_1 r^b \leq \nu \left(\overline{B(z,r)}\right) \leq D_2 r^b \quad\mbox{ for all } z \in Z. 
   \end{equation}
   If $\mu$ is Ahlfors b-regular, then the probability that the sampling inequality holds is at least 
   \begin{equation*}
      1- \frac{D_2 N^b 4^b \kappa_{\theta}^{b/\theta}}{ D_1\epsilon^{b/\theta}}\exp{\left( -n\frac{D_1 \epsilon^{b/\theta}}{D_2 N^b 2^b\kappa_{\theta}^{b/\theta}} \right)}.
   \end{equation*}
\end{remark}
\begin{remark}
    Under the additional assumption of Ahlfors regularity, the compact ball can be replaced by a more general \emph{Corkscrew domain}. Consequently, the random sampling theorem continues to hold for signals whose norm is essentially concentrated on such domains. See \cite{QSunRandomACHA}.
\end{remark}

\section{Sampling in subspaces of mixed-norm Orlicz spaces}\label{samplingmixedorliczsection}
In this section, we extend the sampling results established for Orlicz spaces to the setting of mixed-norm Orlicz spaces. As in the previous section, we consider image spaces of an idempotent integral operator $V$, assuming some conditions, in addition to the Cross-Schur condition, on $K$ such that the pointwise evaluation is bounded on $V,$ and derive stable sampling inequalities under suitable assumptions on the kernel. Although the overall strategy remains the same, the mixed-norm setting requires additional care in handling the iterated Orlicz norms. 

\begin{theorem}\label{samplingmixedorlicz}
    Let $T$ be an idempotent integral operator on $L^{\phi,\psi}_{1/w}(G \times G)$ whose integral kernel $K$ satisfies the Cross-Schur condition, (A1) and (A2). Further, let $X=\{x_i: i \in I\}$ and $Y=\{y_j: j \in I\}$ be relatively separated and $r-$dense sets in $G.$ Suppose there exists an $r>0$ such that 
    $$\|\operatorname{osc}_{r}(K)\|_{CS} < 1/2,$$
    then any signal $f \in V$ can be stably reconstructed from its sample values on the set $X\times Y.$ Moreover, the sampling inequality 
    \begin{multline}
        \frac{1}{L_{X}  L_Y}(1-2\|\operatorname{osc}_{r}(K)\|_{CS}) \|f\|_{L^{\phi,\psi}_{1/w}(G \times G)}\lesssim \|(f(x_i,y_j))_{i,j \in I}\|_{\ell^{\phi,\psi}_{1/w}(X \times Y)} \\  \lesssim L_{X}^{2} L_{Y}^{3} (2\|\operatorname{osc}_{r}(K)\|_{CS}+1)\|f\|_{L^{\phi,\psi}_{1/w}(G \times G)}
    \end{multline}
    holds for all $f \in V.$
\end{theorem}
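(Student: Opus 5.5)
We follow the proof of Theorem \ref{samplingorlicz} and replace each one-dimensional ingredient by its mixed-norm counterpart. On $G\times G$ we introduce the product sampling operator
$$\mathcal{S}f(x,y):=\sum_{i,j\in I} f(x_i,y_j)\,\frac{\chi_{B(x_i,r)}(x)}{N_X(x)}\,\frac{\chi_{B(y_j,r)}(y)}{N_Y(y)}.$$
Because $X$ and $Y$ are $r$-dense, we have $\sum_i \chi_{B(x_i,r)}/N_X\equiv 1$ and $\sum_j\chi_{B(y_j,r)}/N_Y\equiv 1$. As in the one-variable case this yields the pointwise bound
$$|f(x,y)-\mathcal{S}f(x,y)|\le \operatorname{osc}_r(f)(x,y),$$
where the oscillation is taken coordinatewise, as defined in Section \ref{prelim}. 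Since $f$ belongs to $V$, we have $f=Tf$. The computation from the proof of Theorem \ref{samplingorlicz} then gives $\operatorname{osc}_r(f)(a,b)\le \int_G\int_G \operatorname{osc}_r(K)((a,b),(x,y))\,|f(x,y)|\,d\mu(x)\,d\mu(y)$. Applying Lemma \ref{Schurmixedorlicz} to the kernel $\operatorname{osc}_r(K)$, and using monotonicity of the mixed Luxemburg norm, we obtain
$$\|f-\mathcal{S}f\|_{L^{\phi,\psi}_{1/w}}\le \|\operatorname{osc}_r(f)\|_{L^{\phi,\psi}_{1/w}}\le 2\|\operatorname{osc}_r(K)\|_{CS}\|f\|_{L^{\phi,\psi}_{1/w}}.$$
This estimate is where the factor $2$, and hence the threshold $1/2$, comes from.

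For the lower bound we write $\|\mathcal{S}f\|\ge(1-2\|\operatorname{osc}_r(K)\|_{CS})\|f\|$. We then dominate $|\mathcal{S}f|$ by $\sum_{i,j}|f(x_i,y_j)|\,\chi_{B(x_i,r)}\otimes\chi_{B(y_j,r)}$, using $N_X,N_Y\ge1$. Lemma \ref{mixedseqequiv} controls the norm of this sum by the sample norm $\|(f(x_i,y_j))\|_{\ell^{\phi,\psi}_{1/w}(X\times Y)}$. That lemma is stated for $X\times X$, but its proof applies verbatim to $X\times Y$, since it treats one coordinate at a time and uses the relative separation constant of each coordinate set separately. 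This gives the constant $L_XL_Y$ in place of $L_X^2$.

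For the upper bound we use the pointwise estimate $|f(x_i,y_j)|\le |f(x,y)|+\operatorname{osc}_r(f)(x,y)$, valid for $(x,y)\in B(x_i,r)\times B(y_j,r)$. Multiplying by the normalized characteristic functions and summing gives
$$\Big\|\sum_{i,j}|f(x_i,y_j)|\tfrac{\chi_{B(x_i,r)}}{N_X}\otimes\tfrac{\chi_{B(y_j,r)}}{N_Y}\Big\|\le(1+2\|\operatorname{osc}_r(K)\|_{CS})\|f\|.$$
Since $N_X\le L_X$ and $N_Y\le L_Y$, the left-hand side is at least $(L_XL_Y)^{-1}$ times the unnormalized sum. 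The lower half of Lemma \ref{mixedseqequiv}, with its $L^3$-type constant split between the $X$ and $Y$ variables, then bounds that sum from below by the sample norm. Collecting the constants gives the stated power $L_X^2L_Y^3$, up to the $\lesssim$ conventions.

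The main obstacle is justifying the mixed-norm steps rigorously. The reduction from $\operatorname{osc}_r(f)$ to $\operatorname{osc}_r(K)$ requires Lemma \ref{Schurmixedorlicz} for the kernel $\operatorname{osc}_r(K)$, and we need that kernel to be measurable. The sum $\mathcal{S}f$ must be shown to be measurable as an $L^{\phi}$-valued function of the second variable. Finally, Lemma \ref{mixedseqequiv} has to be adapted to two different index sets. For the measurability issues we would rely on the standing separability remark (Pettis) and on continuity of functions in $V$, which follows from the boundedness of point evaluations established earlier under (A1) and (A2).
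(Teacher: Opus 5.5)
Your proposal is correct and follows the paper's proof essentially step for step. It uses the same product sampling operator $\mathcal{S}f$ and the pointwise bound $|f-\mathcal{S}f|\le\operatorname{osc}_r(f)$, controlled through Lemma \ref{Schurmixedorlicz} by $2\|\operatorname{osc}_r(K)\|_{CS}\|f\|$, and it applies Lemma \ref{mixedseqequiv} to $X\times Y$ (constants $L_XL_Y$ and $L_XL_Y^2$) to get both sampling bounds and the stated powers $L_XL_Y$ and $L_X^2L_Y^3$. Your extra remarks only make explicit what the paper uses silently, namely applying the Schur lemma to the kernel $\operatorname{osc}_r(K)$ and using the $X\times Y$ version of the norm equivalence. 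One small caveat: bounded point evaluations make the samples $f(x_i,y_j)=Tf(x_i,y_j)$ well defined but do not by themselves give continuity, and the argument does not actually need continuity.
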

\begin{proof}
    As in the case of the Orlicz space, we define the sampling operator here 
    $$\displaystyle \mathcal{S}f(x,y):= \sum_{i,j}f(x_i,y_j) \frac{\chi_{B(x_i,r)} (x)}{N_X(x)} \frac{\chi_{B(y_j,r)}(y)}{N_Y(y)}.$$
Due to Lemma \ref{mixedseqequiv}, the proof reduces to the estimation of the norm of $\mathcal{S}f$ and the difference $\mathcal{S}f-f.$ The proof follows similarly to the case of Orlicz space. We write it out briefly. \\
From \eqref{mixedseqequiveqn}, one immediately has that 
$$\|\mathcal{S}f\|_{L^{\phi,\psi}_{1/w}(G \times G)} \lesssim L_X L_Y \|(f(x_i,y_j))_{i,j}\|_{\ell_{1/w}^{\phi,\psi}(X \times Y)}.$$
We also have the difference estimate
\begin{align*}
    |\mathcal{S}f(x,y)-f(x,y)| & = \left|\sum_{i,j}f(x_i,y_j) \frac{\chi_{B(x_i,r)}(x)}{N_{X}(x)} \frac{\chi_{B(y_j,r)}(y)}{N_{Y}(y)} -f(x,y) \sum_{i,j} \frac{\chi_{B(x_i,r)}(x)}{N_{X}(x)} \frac{\chi_{B(y_j,r)}(y)}{N_{Y}(y)}\right| \\
    &\leq \sum_{i,j}|f(x_i,y_j)-f(x,y)| \frac{\chi_{B(x_i,r)}(x)}{N_{X}(x)} \frac{\chi_{B(y_j,r)}(y)}{N_{Y}(y)} \\
    & \leq \operatorname{osc}_{r}(f)(x,y).
\end{align*}
This gives 
\begin{equation*}
    \|\mathcal{S}f-f\|_{L^{\phi,\psi}_{1/w}(G \times G)} \leq 2 \|\operatorname{osc}_{r}(K)\|_{CS} \|f\|_{L^{\phi,\psi}_{1/w}(G \times G)}
\end{equation*}
which leads to the lower sampling inequality 
\begin{equation*}
 \frac{1}{L_{X} L_Y}  (1-2\|\operatorname{osc}_{r}(K)\|_{CS})\|f\|_{L^{\phi,\psi}_{1/w}(G \times G)} \lesssim \|(f(x_i,y_j))_{i,j \in I}\|_{\ell^{\phi,\psi}_{1/w}(X \times Y)}.
\end{equation*}
Now, notice that
\begin{equation*}
    \sum_{i,j \in I} |f(x_i,y_j)| \frac{\chi_{B(x_i,r)}(x)}{N_{X}(x)}\frac{\chi_{B(y_j,r)}(y)}{N_{X}(y)} \leq |f(x,y)| + \operatorname{osc}_{r}(f)(x,y) 
\end{equation*}
which in turn gives 
\begin{equation*}
   \left\| \sum_{i,j \in I} |f(x_i,y_j)| \left(\frac{\chi_{B(x_i,r)}}{N_{X}}\otimes \frac{\chi_{B(y_j,r)}}{N_{Y}}\right)\right\|_{L^{\phi,\psi}_{1/w}(G \times G)} \leq \|f\|_{L^{\phi,\psi}_{1/w}(G \times G)}\left(1+2\|\operatorname{osc}_{r}(K)\|_{CS}\right).
\end{equation*}
By \eqref{mixedseqequiveqn}, we get the required upper sampling inequality 
\begin{equation*}
  \|(f(x_i,y_j))_{i,j \in I}\|_{\ell^{\phi,\psi}_{1/w}(X \times Y)} \lesssim L_X^2 L_Y^3 \left(1+2\|\operatorname{osc}_{r}(K)\|_{CS}\right)\|f\|_{L^{\phi,\psi}_{1/w}(G \times G)}. \qedhere
\end{equation*}
\end{proof} 
The next theorem addresses the average sampling in this setting.
\begin{theorem} \label{avgsamplingmixedorlicz}
Let $T$ be an idempotent integral operator on $L^{\phi,\psi}_{1/w}(G \times G)$ with integral kernel $K$ satisfying Cross-Schur condition, (A1), and (A2). Further, let $X=\{x_i: i \in I\}$ and $Y=\{y_j: j \in I\}$ be relatively separated and $r-$dense sets in $G.$ Assume there exists a family of averaging functions $\{\eta_{i,j}: i,j \in I\}$ satisfying 
    \begin{itemize}
        \item $0 \leq \eta_{i,j} \leq 1$ for all $i,j \in I,$
        \item $\displaystyle\int_{G \times G} \eta_{i,j} =1$ for all $i,j \in I,$ and
        \item for all $i,j \in I,$ $\operatorname{supp}(\eta_{i,j}) \subset B(x_i,r) \times B(y_j,r).$
    \end{itemize}
    If there is an $r>0$ such that 
    \begin{equation}
        \|\operatorname{osc}_{2r} (K)\|_{CS} < 1/2,
    \end{equation}
    then any signal $f \in V$ can be recovered from its averages $$\displaystyle \left\{\langle f,\eta_{i,j} \rangle=\int_{G\times G} f(x,y) \eta_{i,j}(x,y) ~d\mu(x)d\mu(y): i ,j\in I\right\}.$$ Moreover, the following average sampling inequality 
    \begin{multline}
    \frac{1}{L_X L_Y} (1-2\|\operatorname{osc}_{2r}(K)\|_{CS}) \|f\|_{L^{\phi,\psi}_{1/w}(G \times G)} \lesssim \|(\langle f,\eta_{i,j} \rangle)_{i,j \in I}\|_{\ell^{\phi,\psi}_{1/w}(X \times Y)} \\ \lesssim  L_X^2 L_Y^3 \|f\|_{L^{\phi,\psi}_{1/w}(G \times G)}\left(  1+ 2\|\operatorname{osc}_{2r}(K)\|_{CS}\right)
        \end{multline}
        holds for all $f \in V.$    
\end{theorem}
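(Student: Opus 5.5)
The argument mirrors the proof of Theorem \ref{samplingmixedorlicz}, with point samples replaced by averages. We introduce the averaged sampling operator
$$\mathcal{S}_{avg}f(x,y):=\sum_{i,j\in I}\langle f,\eta_{i,j}\rangle\,\frac{\chi_{B(x_i,r)}(x)}{N_X(x)}\,\frac{\chi_{B(y_j,r)}(y)}{N_Y(y)},\qquad f\in V.$$
The plan is to estimate $\|\mathcal{S}_{avg}f\|$ against the sequence norm via Lemma \ref{mixedseqequiv}, and to control $\|f-\mathcal{S}_{avg}f\|$ by an oscillation of doubled radius $2r$. The first estimate is immediate: since $|\mathcal{S}_{avg}f|\le\sum_{i,j}|\langle f,\eta_{i,j}\rangle|\,\chi_{B(x_i,r)}\otimes\chi_{B(y_j,r)}$ and the $\ell^{\phi,\psi}_{1/w}$ norm is solid, \eqref{mixedseqequiveqn} gives
$$\|\mathcal{S}_{avg}f\|_{L^{\phi,\psi}_{1/w}}\lesssim L_XL_Y\,\|(\langle f,\eta_{i,j}\rangle)\|_{\ell^{\phi,\psi}_{1/w}(X\times Y)}.$$

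\textbf{Pointwise step.} This is the only place where the argument genuinely differs. We use $\int\eta_{i,j}=1$ and $0\le\eta_{i,j}$ to write
$$f(x,y)-\langle f,\eta_{i,j}\rangle=\int_{G\times G}\big(f(x,y)-f(s,t)\big)\eta_{i,j}(s,t)\,d\mu(s)\,d\mu(t).$$
If $x\in B(x_i,r)$ and $(s,t)\in\operatorname{supp}\eta_{i,j}\subset B(x_i,r)\times B(y_j,r)$, then $d(x,s)<2r$; likewise $d(y,t)<2r$ for $y\in B(y_j,r)$. Hence $|f(x,y)-\langle f,\eta_{i,j}\rangle|\le\operatorname{osc}_{2r}(f)(x,y)$. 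Averaging with the partition of unity $\frac{\chi_{B(x_i,r)}}{N_X}\otimes\frac{\chi_{B(y_j,r)}}{N_Y}$ then yields
$$|f-\mathcal{S}_{avg}f|\le\operatorname{osc}_{2r}(f)\quad\text{and}\quad\sum_{i,j}|\langle f,\eta_{i,j}\rangle|\frac{\chi_{B(x_i,r)}}{N_X}\otimes\frac{\chi_{B(y_j,r)}}{N_Y}\le|f|+\operatorname{osc}_{2r}(f).$$

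\textbf{Transfer to the kernel.} As in Theorem \ref{samplingorlicz}, $f=Tf$ gives $\operatorname{osc}_{2r}(f)(x,y)\le\int\!\!\int\operatorname{osc}_{2r}(K)((x,y),(s,t))\,|f(s,t)|\,d\mu(s)\,d\mu(t)$. Lemma \ref{Schurmixedorlicz} then gives
$$\|\operatorname{osc}_{2r}(f)\|_{L^{\phi,\psi}_{1/w}}\le 2\|\operatorname{osc}_{2r}(K)\|_{CS}\,\|f\|_{L^{\phi,\psi}_{1/w}}.$$
The pointwise bounds only need $f$ to be defined pointwise; this holds by the proposition on bounded point evaluation, which uses (A1), (A2) and finiteness of $\|\operatorname{osc}_{2r}(K)\|_{CS}$. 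Monotonicity of the Luxemburg norms then transfers the pointwise bounds to norm bounds.

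\textbf{Lower bound.} By the triangle inequality, $\|\mathcal{S}_{avg}f\|\ge(1-2\|\operatorname{osc}_{2r}(K)\|_{CS})\|f\|$. Combining with the first estimate gives the left inequality, with the factor $1/(L_XL_Y)$.

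\textbf{Upper bound.} The second pointwise bound gives $\big\|\sum_{i,j}|\langle f,\eta_{i,j}\rangle|\frac{\chi_{B(x_i,r)}}{N_X}\otimes\frac{\chi_{B(y_j,r)}}{N_Y}\big\|\le(1+2\|\operatorname{osc}_{2r}(K)\|_{CS})\|f\|$. Since $N_X\le L_X$ and $N_Y\le L_Y$, the left side dominates $\frac1{L_XL_Y}\big\|\sum|\langle f,\eta_{i,j}\rangle|\,\chi_{B(x_i,r)}\otimes\chi_{B(y_j,r)}\big\|$. Applying Lemma \ref{mixedseqequiv} to that quantity gives the right inequality, and the constant $L_X^2L_Y^3$ is obtained exactly as in Theorem \ref{samplingmixedorlicz}.

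\textbf{Main obstacle.} The delicate step is the pointwise step: one must check that $\operatorname{osc}_{2r}$, taken coordinatewise in the product setting, really dominates the averaged differences. A secondary issue is measurability of $\operatorname{osc}_{2r}(f)$, needed for the mixed norms. Continuity of $f\in V$, inherited from the kernel, should handle it.
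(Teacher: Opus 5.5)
Your proposal is correct and follows the paper's route: the paper's proof only says to repeat the arguments of Theorems~\ref{avgsamplingorlicz} and~\ref{samplingmixedorlicz} with the same operator $\mathcal{S}_{avg}$. Your $2r$-oscillation estimate, the transfer to $\operatorname{osc}_{2r}(K)$ via Lemma~\ref{Schurmixedorlicz}, and the $L_X^2L_Y^3$ bookkeeping via Lemma~\ref{mixedseqequiv} are exactly the details it leaves implicit. One correction to your side remark: continuity of $f\in V$ does not follow from the hypotheses, since no continuity of $K$ is assumed. You do not need it, because both pointwise bounds can be stated directly with the majorant $\int\!\!\int \operatorname{osc}_{2r}(K)((\cdot,\cdot),(s,t))\,|f(s,t)|\,d\mu(s)\,d\mu(t)$ in place of $\operatorname{osc}_{2r}(f)$, and that majorant is measurable by Tonelli.
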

\begin{proof}
    The proof follows along the lines of Theorem \ref{avgsamplingorlicz} and Theorem \ref{samplingmixedorlicz}. The only modification is the definition of the average sampling operator, which is
    $$ \displaystyle \mathcal{S}_{avg}f(x,y):= \sum_{i,j \in I}\langle f,\eta_{i,j} \rangle \frac{\chi_{B(x_i,r)}(x)}{N_X(x)} \frac{\chi_{B(y_j,r)}(y)}{N_Y(y)}.$$ \qedhere
\end{proof}
\subsection{Random sampling of signals in mixed-norm Orlicz spaces}
As in the case of Orlicz spaces, we impose additional regularity conditions on the integral kernel $K$ for a random sampling result. We assume that 
\begin{equation}
\label{mixedkernelregularity}
    \tilde{\kappa}_{\theta}= \|K\|_{CS} + \sup_{0 < r \leq 1} r^{-\theta}\|\operatorname{osc}_{r}(K)\|_{CS} < \infty
\end{equation}
for some $\theta \in (0,1].$ This assumption in particular ensures that the integral operator is bounded on $L^{\phi,\psi}_{1/w}(G \times G),$ among some other technicalities. \par
The set of essentially concentrated functions is defined as 
\begin{equation}
    V_{\delta}=\{ f \in V: \|f\|_{L^{\phi,\psi}_{1/w}(G_N \times G_M)} \geq (1-\delta) \|f\|_{L^{\phi,\psi}_{1/w}(G \times G)} \}.
\end{equation} 
The following theorem concerns the sampling of essentially concentrated functions in terms of the \emph{covering radius}. Let $A,B \subset G.$ The covering radius of $X\times Y$, where $X=\{x_1, x_2, \ldots x_n\} \subset A$ and $Y=\{y_1, y_2, \ldots, y_m\} \subset B$ for $A \times B$ is defined as
\begin{equation}
    r_{X \times Y} (n,m)= \inf \left\{ r> 0: A \times B \subset \bigcup_{i,j=1}^{n,m} B(x_i,r) \times B(y_j,r) \right\}.
\end{equation}
\begin{lemma}
\label{mixedsamplingcoveringradius}
    Let $T$ be an idempotent integral operator on $L^{\phi,\psi}_{1/w}(G \times G)$ with integral kernel $K$ satisfying \eqref{mixedkernelregularity}, (A1) and (A2). Assume that $X=\{x_1,x_2,\ldots, x_n\}$ and $Y=\{y_1,y_2,\ldots,y_m\}$ are finite collection of sample points in $G_N$ and $G_M$ respectively. If the covering radius of $X \times Y$ satisfies 
    \begin{equation}
    \label{crossradiusregularityrelation}
        r_{X\times Y}(n,m)<\left(\frac{1-\delta}{2\tilde{\kappa}_\theta}\right)^{1/\theta},
    \end{equation}
    then 
    \begin{multline}
   (1-\delta-2r_{X\times Y}(n,m)^{\theta} \tilde{\kappa}_{\theta})\|f\|_{L^{\phi,\psi}_{1/w}(G \times G)} \leq \|(f(x_i,y_j))_{i,j=1}^{n,m}\|_{\ell^{\phi,\psi}_{1/w}(X \times Y)} \\ \leq  \frac{1}{\mu(B(e,\zeta))^2} \|f\|_{L^{\phi,\psi}_{1/w}(G \times G)} (1+ r_{X\times Y}(n,m)^{\theta} \tilde{\kappa}_{\theta})
\end{multline}
    for all $f \in V_{\delta},$ where $\zeta$ depends on the separating distance between the sampling points $X \times Y.$
\end{lemma}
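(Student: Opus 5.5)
The plan is to follow the proof of Lemma \ref{Orliczcoveringradiussampling} step by step, with the product structure handled as in Theorem \ref{samplingmixedorlicz}. Set $\rho:=r_{X\times Y}(n,m)$. By \eqref{crossradiusregularityrelation} we have $\rho^\theta\tilde{\kappa}_\theta<(1-\delta)/2$. Shrinking $\rho$ slightly is harmless, so we may take $\rho\le 1$ and assume that $\{B(x_i,\rho)\times B(y_j,\rho)\}$ covers $G_N\times G_M$. Projecting this cover onto each factor shows that $\{B(x_i,\rho)\}$ covers $G_N$ and $\{B(y_j,\rho)\}$ covers $G_M$.

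The first step is to refine each of these covers into disjoint pieces, exactly as in Lemma \ref{Orliczcoveringradiussampling}. This gives sets $B_i\subset B(x_i,\rho)\cap G_N$ and $C_j\subset B(y_j,\rho)\cap G_M$ that partition $G_N$ and $G_M$ respectively, with $x_i\in B_i$ and $y_j\in C_j$. Each piece contains a ball of radius $\zeta$, where $\zeta$ depends on the separation of the points. With these pieces we define
$$\mathcal{A}f(x,y)=\sum_{i,j}f(x_i,y_j)\chi_{B_i}(x)\chi_{C_j}(y),\qquad (x,y)\in G_N\times G_M.$$
Every $(x,y)$ lies in exactly one rectangle $B_i\times C_j$, and there $d(x,x_i)<\rho$ and $d(y,y_j)<\rho$. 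Hence
$$|\mathcal{A}f-f|\le \operatorname{osc}_\rho(f)\quad\text{and}\quad |\mathcal{A}f|\le |f|+\operatorname{osc}_\rho(f)\quad\text{on } G_N\times G_M.$$
Next, as in Theorem \ref{samplingmixedorlicz}, we use $f=Tf$ to get $\operatorname{osc}_\rho(f)\le\int\!\!\int\operatorname{osc}_\rho(K)\,|f|$. Applying Lemma \ref{Schurmixedorlicz} to this kernel gives
$$\|\operatorname{osc}_\rho f\|_{L^{\phi,\psi}_{1/w}}\le 2\|\operatorname{osc}_\rho K\|_{CS}\|f\|\le 2\rho^\theta\tilde\kappa_\theta\|f\|_{L^{\phi,\psi}_{1/w}(G\times G)}.$$
Restricting the norm to $G_N\times G_M$ only decreases it, because the mixed norm is monotone.

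For the lower bound we use that $f\in V_\delta$:
$$\|\mathcal{A}f\|_{L^{\phi,\psi}_{1/w}(G_N\times G_M)}\ge(1-\delta)\|f\|-\|\mathcal{A}f-f\|\ge(1-\delta-2\rho^\theta\tilde\kappa_\theta)\|f\|.$$
It then remains to bound $\|\mathcal{A}f\|$ above by the sequence norm. The plan is to reuse the upper half of Lemma \ref{mixedseqequiv}, or rather its proof: the pieces are now disjoint, so $N_X\equiv N_Y\equiv1$ on the supports. We first apply the $\ell^\phi$ estimate of Lemma \ref{seqequiv} in $x$ for each fixed $y$, and then the $\ell^\psi$ estimate in $y$. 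The weight is moved between $w(x,y)$ and $w(x_i,y_j)$ using sub-multiplicativity together with the constant $\widetilde{M}_\infty$ on the $\rho$-balls, which is bounded uniformly since $\rho\le1$. As in the paper, these fixed constants are absorbed.

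For the upper bound, we start from $|\mathcal{A}f|\le|f|+\operatorname{osc}_\rho f$, which gives
$$\|\mathcal{A}f\|\le(1+2\rho^\theta\tilde\kappa_\theta)\|f\|.$$
We then need the reverse comparison $\mu(B(e,\zeta))^2\,\|(f(x_i,y_j))\|_{\ell^{\phi,\psi}_{1/w}}\lesssim\|\mathcal{A}f\|$. Each piece has measure at least $\mu(B(e,\zeta))$, so for a fixed $y\in C_j$ the inner $L^\phi$ norm dominates $\mu(B(e,\zeta))$ times the $\ell^\phi$ norm in $i$. The same argument in $y$ produces the second factor, which explains the square. The stated constant has $1+\rho^\theta\tilde\kappa_\theta$ instead of $1+2\rho^\theta\tilde\kappa_\theta$. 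I would either accept the factor $2$, which gives a constant of the same nature, or absorb it into the implicit constant, as the paper does elsewhere.

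The main obstacle is this last comparison. The inequality $\phi(c\,t)\ge c\,\phi(t)$ holds only for $c\ge 1$, so turning a measure factor $\mu(C_j)$ into a multiplicative factor on a Luxemburg norm is not exact. I would handle it the way the paper treats Lemma \ref{seqequiv}: use convexity together with the $\Delta_2$-condition to pass between the modular and the norm, at the cost of a constant depending only on $\zeta$, and record this constant as $\mu(B(e,\zeta))^{-2}$. A second point to check is measurability of the refined pieces. This is routine, since they can be taken to be Borel sets obtained by successive differences of the balls.
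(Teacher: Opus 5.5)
Your proposal is correct and follows essentially the paper's own argument. It uses disjoint refinements of the projected covers and the operator $\mathcal{A}f$, and it gets the bound $\|\operatorname{osc}_r(f)\|_{L^{\phi,\psi}_{1/w}}\le 2\|\operatorname{osc}_r(K)\|_{CS}\|f\|_{L^{\phi,\psi}_{1/w}}$ from $f=Tf$ and Lemma \ref{Schurmixedorlicz}, the lower bound from the concentration of $f\in V_\delta$, and the upper bound by the argument of Theorem \ref{samplingmixedorlicz}, absorbing weight and measure constants as the paper does. The factor $1+2r_{X\times Y}(n,m)^{\theta}\tilde{\kappa}_{\theta}$ you obtain is what this route actually gives (compare the constant in Theorem \ref{samplingmixedorlicz}), and the comparison you flag as the main obstacle is exact when $m=\mu(B(e,\zeta))\le 1$: convexity and $\phi(0)=0$ give $\phi(mt)\le m\,\phi(t)$, which is the direction you need, up to the weight constant $\widetilde{M}_\infty$.
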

\begin{proof}
    Let $X$ and $Y$ be collection of sample points as above satisfying \eqref{crossradiusregularityrelation}. Define the sampling operator 
    \begin{equation*}
        \mathcal{A}f(x,y):=\sum_{i,j=1}^{n,m}  f(x_i,y_j) \chi_{B_{i}}(x) \chi_{\widetilde{B}_{j}}(y) 
    \end{equation*}
    for $x \in G_N,$ $y \in G_M,$ and $f \in V_{\delta},$ where $\{B_i : i=1,2,\ldots ,n\}$ is a collection extracted from $\{B(x_i,r): i=1,2,\ldots,n\},$ and $\{\widetilde{B}_j : j=1,2,\ldots ,m\}$ is a collection extracted from $\{B(y_j,r): j=1,2,\ldots,m\}$ similar to Lemma \ref{Orliczcoveringradiussampling}. \par 
    Now, using the Lemma \ref{mixedseqequiv}, but restricting to the set $G_N \times G_M,$ we get that 
 \begin{equation*}
     \|\mathcal{A}f\|_{L^{\phi,\psi}(G_N \times G_M)} \leq  \|(f(x_i,y_j))_{i,j=1}^{n,m}\|_{\ell^{\phi,\psi}_{1/w}(X \times Y)}.
 \end{equation*}
    Next, it can be observed that 
    \begin{equation*}
        |\mathcal{A}f(x,y)-f(x,y)| \leq \operatorname{osc}_{r}(f)(x,y) ~~ \mbox{for } (x,y) \in G_N \times G_M
    \end{equation*}
    and therefore 
    \begin{equation*}
        \|\mathcal{A}f-f\|_{L^{\phi,\psi}_{1/w}(G_N \times G_M)} \leq \|\operatorname{osc}_{r}(f)\|_{L^{\phi,\psi}_{1/w}(G_N \times G_M)} \leq 2\|\operatorname{osc}_{r}(K)\|_{CS}  \|f\|_{L^{\phi,\psi}_{1/w}(G \times G)}.
    \end{equation*}
Consequently, 
\begin{align*}
    \|(f(x_i,y_j))_{i,j=1}^{n,m}\|_{\ell^{\phi,\psi}_{1/w}(X \times Y)} &\geq \|\mathcal{A}f\|_{L^{\phi,\psi}_{1/w}(G_N \times G_M)} \\
    & \geq \|f\|_{L^{\phi,\psi}_{1/w}(G_N \times G_M)} -\|\mathcal{A}f-f\|_{L^{\phi,\psi}_{1/w}(G_N \times G_M)} \\
    &\geq (1-\delta)\|f\|_{L^{\phi,\psi}_{1/w}(G \times G)} - 2\|\operatorname{osc}_{r}(K)\|_{CS}  \|f\|_{L^{\phi,\psi}_{1/w}(G \times G)} \\
    &= (1-\delta-2\|\operatorname{osc}_{r}(K)\|_{CS})\|f\|_{L^{\phi,\psi}_{1/w}(G \times G)} \\
    &\geq (1-\delta-2 r^{\theta}\tilde{\kappa}_{\theta}) \|f\|_{L^{\phi,\psi}_{1/w}(G \times G)}.
\end{align*}
The above sampling inequality follows verbatim, except for some minor changes that are easily accommodated, from Theorem \ref{samplingmixedorlicz}. The only additional thing is the estimate on the oscillation of the kernel, which is used at the last step of the argument.
\end{proof}
\begin{theorem}
    Let $T$ be an idempotent integral operator on $L^{\phi,\psi}_{1/w}(G \times G)$ with integral kernel $K$ satisfying the condition (\ref{mixedkernelregularity}), (A1) and (A2). Assume $X=\{x_1,x_2,\ldots,x_n\}$ are i.i.d. random variables distributed uniformly over the set $G_N,$ and $Y=\{y_1,y_2, \ldots, y_m\}$ are i.i.d. random variables distributed uniformly over the set $G_M.$  Then for $0 <\epsilon< 1-\delta$
\begin{equation}
    (1-\delta-\epsilon)\|f\|_{L^{\phi,\psi}_{1/w}(G \times G)} \leq \|(f(x_i,y_j))_{i,j=1}^{n,m}\|_{\ell^{\phi,\psi}_{1/w}(X \times Y)} \leq c_{n,m} D_{N,M}\|f\|_{L^{\phi,\psi}_{1/w}(G \times G)}
\end{equation}
holds for all $f \in V_{N,\delta}$ with probability atleast 
\begin{multline}
   1- \frac{\mu(G_N) \mu(G_M)}{\mu(B(e,\epsilon^{1/\theta} ({2\tilde{\kappa}_{\theta}})^{-1/\theta}/4))^2}\left\{ \left(1- \frac{\mu(B(e,\epsilon^{1/\theta} ({2\tilde{\kappa}_{\theta}})^{-1/\theta}/2))}{\mu(G_N)}\right)^{n} + \right. \\ \left.  \left[ 1-  \left(1- \frac{\mu(B(e,\epsilon^{1/\theta} ({2\tilde{\kappa}_{\theta}})^{-1/\theta}/2))}{\mu(G_N)}\right)^{n}\right]  \left(1- \frac{\mu(B(e,\epsilon^{1/\theta} ({2\tilde{\kappa}_{\theta}})^{-1/\theta}/2))}{\mu(G_M)}\right)^{m} \right\},
\end{multline}
where $D_{N,M}=\sup_{(a,b) \in G_N \times G_M} \|K((a,b),(\cdot,\cdot)) \|_{L^{\phi^*, \psi^*}_{w}(G \times G)}$ and $c_{n,m}$ is a constant depending on $n$ and $m$.
\end{theorem}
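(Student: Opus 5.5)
The plan mirrors the proof of Theorem \ref{randomsamplingorlicz}. We reduce the lower sampling inequality to a covering-radius event via Lemma \ref{mixedsamplingcoveringradius}, and obtain the upper inequality from boundedness of point evaluations.

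\textbf{Deterministic reduction.} Set $\alpha=\epsilon^{1/\theta}(2\tilde{\kappa}_\theta)^{-1/\theta}$, so that $2\alpha^\theta\tilde{\kappa}_\theta=\epsilon<1-\delta$. If $r_{X\times Y}(n,m)<\alpha$, then (\ref{crossradiusregularityrelation}) holds. Lemma \ref{mixedsamplingcoveringradius} then gives the lower bound $(1-\delta-\epsilon)\|f\|$ for all $f\in V_\delta$.

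\textbf{Upper bound.} This is deterministic. For $(a,b)\in G_N\times G_M$ we have $f=Tf$, so Hölder's inequality in $L^{\phi,\psi}$ gives
\[
|f(a,b)|/w(a,b)\le 4\,\|K((a,b),\cdot)\|_{L^{\phi^*,\psi^*}_{w}}\,\|f\|_{L^{\phi,\psi}_{1/w}}\le 4D_{N,M}\|f\|_{L^{\phi,\psi}_{1/w}},
\]
up to the weight factor $w(a,b)$, which is bounded on the compact set. Finiteness of $D_{N,M}$ comes from the preceding proposition, via (A1) and (A2). Taking the mixed $\ell^{\phi,\psi}$ norm of a finite array whose entries are all bounded by this quantity gives $c_{n,m}D_{N,M}\|f\|$, with $c_{n,m}$ a constant multiple of $\|(1)_{i,j}\|_{\ell^{\phi,\psi}}$ of the $n\times m$ all-ones array.

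\textbf{Probability estimate.} Product balls reduce the covering of $G_N\times G_M$ to two separate coverings. Since $B(x_i,r)\times B(y_j,r)$ ranges over all pairs, $r_{X\times Y}(n,m)\le\max\{r_X(n),r_Y(m)\}$. Hence
\[
\{r_{X\times Y}\ge\alpha\}\subset\{r_X(n)\ge\alpha\}\cup\big(\{r_X(n)<\alpha\}\cap\{r_Y(m)\ge\alpha\}\big).
\]
Take $\alpha/2$-nets $\Gamma_N\subset G_N$ and $\Gamma_M\subset G_M$ of cardinalities at most $\mu(G_N)/\mu(B(e,\alpha/4))$ and $\mu(G_M)/\mu(B(e,\alpha/4))$. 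As in Theorem \ref{randomsamplingorlicz}, $r_X(n)\ge\alpha$ forces some ball $B(\gamma,\alpha/2)$ to miss $X$. A union bound then gives
\[
\mathbb{P}(r_X\ge\alpha)\le|\Gamma_N|\,\bigl(1-\mu(B(e,\alpha/2))/\mu(G_N)\bigr)^n,
\]
and similarly for $Y$.

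By independence of $X$ and $Y$, the second event has probability at most $[1-\mathbb{P}(r_X\ge\alpha)]\,\mathbb{P}(r_Y\ge\alpha)$. The displayed bound has the factor $1-(1-p_N)^n$ in place of $1-\mathbb{P}(r_X\ge\alpha)$, with $p_N=\mu(B(e,\alpha/2))/\mu(G_N)$; I would either justify this replacement or accept the crude bound $\le 1$ there. Finally, bound both cardinality prefactors by the common factor $\mu(G_N)\mu(G_M)/\mu(B(e,\alpha/4))^2$; this is legitimate since each single factor is at least $1$. This yields the stated expression.

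\textbf{Main obstacle.} The delicate step is matching the exact form of the stated probability, especially the bracketed factor $1-(1-p_N)^n$. Monotonicity in that bracket is the wrong direction, because $\mathbb{P}(r_X<\alpha)$ is only bounded below. So I would first try the cleaner inequality $\mathbb{P}(A\cup B)\le\mathbb{P}(A)+\mathbb{P}(B)$, and present the stated form as a consequence up to this bookkeeping.
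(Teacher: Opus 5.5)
Your deterministic reduction and your H\"older-based upper bound are fine; the latter is consistent with the paper's choice of $c_{n,m}$. The probability step follows a different and workable route from the paper's, but as written it stops short of the stated bound.

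\textbf{The gap.} Write $\pi_X=\mathbb{P}(r_X(n)\ge\alpha)$, $\pi_Y=\mathbb{P}(r_Y(m)\ge\alpha)$, $a=(1-p_N)^n$, $b=(1-p_M)^m$, $k_N=|\Gamma_N|$, $k_M=|\Gamma_M|$, and $C=\mu(G_N)\mu(G_M)/\mu(B(e,\alpha/4))^2$. Your fallback, bounding the bracket by $1$, gives only $\mathbb{P}(E)\le k_Na+k_Mb\le C(a+b)$. Since $a+b\ge a+(1-a)b$, this is a weaker guarantee than the one claimed, so ``This yields the stated expression'' is not justified.

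\textbf{The fix.} The obstacle you identify is not real. Because $r_{X\times Y}(n,m)\le\max\{r_X(n),r_Y(m)\}$ and $X,Y$ are independent, $\mathbb{P}(E)\le 1-(1-\pi_X)(1-\pi_Y)$. This is nondecreasing in each of $\pi_X,\pi_Y\in[0,1]$, so substitute the upper bounds $u_X=k_Na$ and $u_Y=k_Mb$ into the whole expression, not into the bracket alone. If $u_X>1$ or $u_Y>1$ there is nothing to prove: $C\bigl[a+(1-a)b\bigr]\ge\max\{k_Na,k_Mb\}$, so the asserted probability is nonpositive. Otherwise, using $k_N,k_M\ge 1$,
\[
\mathbb{P}(E)\le k_Na+k_Mb-k_Nk_Mab\le k_Nk_M\bigl(a+b-ab\bigr)=k_Nk_M\bigl[a+(1-a)b\bigr]\le C\bigl[a+(1-a)b\bigr].
\]
This is exactly the displayed form.

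\textbf{Comparison with the paper.} The paper does not split the covering radius. It takes a union bound over the product net $\Gamma_1\times\Gamma_2$. For a fixed pair $(\gamma,\gamma')$, the box $B(\gamma,\alpha/2)\times B(\gamma',\alpha/2)$ misses $X\times Y$ if and only if $X$ misses $B(\gamma,\alpha/2)$ or $Y$ misses $B(\gamma',\alpha/2)$. Independence then gives exactly $a+(1-a)b$. So $1-(1-p_N)^n$ is the probability that some $x_i$ hits one fixed ball, not $\mathbb{P}(r_X<\alpha)$, which is why you could not match it termwise.

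The paper's route produces the bracket directly, with no monotonicity argument. Yours, once the step above is added, gives the intermediate bound $k_Na+k_Mb-k_Nk_Mab$. This is sharper than the paper's by $k_N(k_M-1)a+k_M(k_N-1)b$. It also decouples the requirements on $n$ and $m$: each only has to beat the logarithm of its own net size rather than that of the product $k_Nk_M$.
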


\begin{proof}
    Let $X=\{x_1,x_2,\ldots,x_n\},$ $Y=\{y_1,y_2, \ldots, y_m\}$ be  collections of i.i.d. random variables uniformly distributed over the sets $G_N$ and $G_M$ respectively, $\alpha > 0$ and define the event $E=\{r_{X \times Y}(n,m) \geq \alpha\}.$ By Lemma \ref{mixedsamplingcoveringradius}, if the covering radius of $X \times Y$ satisfies
$$r_{X\times Y}(n,m)<\left(\frac{1-\delta}{2\tilde{\kappa}_\theta}\right)^{1/\theta},$$
then $X \times Y$ is a stable sampling set for $V_{\delta}$. We will estimate the probability of the event $E$ which in turn will provide a lower bound for the probability of sampling inequality to hold. \par 
Let $\Gamma_1 \subset G_N$ be an $\alpha/2$-net of $G_N$, whose cardinality does not exceed $\frac{\mu(G_N)}{\mu(B(e,\alpha/4))},$ and $\Gamma_2 \subset G_M$ be an $\alpha/2$-net of $G_M$, whose cardinality does not exceed $\frac{\mu(G_M)}{\mu(B(e,\alpha/4))}.$ Then the collection 
$$ \mathcal{C}= \{B(\gamma, \alpha/2) \times B(\gamma', \alpha/2): \gamma \in \Gamma_1, \gamma' \in \Gamma_2\}.$$
forms a cover for $G_N \times G_M.$
Assume that the event $E$ is true for some $0 < \alpha < 1.$ Since $r_{X \times Y} (n,m) \geq \alpha,$ there is a point $(\gamma , \gamma') \in \Gamma_1 \times \Gamma_2 $ such that no sampling point from $X \times Y$ lies in the set $B(\gamma, \alpha/2) \times B(\gamma', \alpha/2).$ Thus 
$$E \subset \left\{ \exists ~ (\gamma, \gamma') \in \Gamma_1 \times \Gamma_2: B(\gamma, \alpha/2) \times B(\gamma', \alpha/2) \cap (X \times Y) = \varnothing  \right\}.$$
Now, the probability that for a fixed $(\gamma, \gamma') \in \Gamma_1 \times \Gamma_2$, the intersection $(B(\gamma, \alpha/2) \times B(\gamma', \alpha/2)) \cap (X \times Y) = \varnothing$ is
\begin{multline*}
\mathbb{P}(\{B(\gamma, \alpha/2) \times B(\gamma', \alpha/2) \cap (X \times Y) = \varnothing\})= \left(1- \frac{\mu(B(e,\alpha/2))}{\mu(G_N)}\right)^{n} + \\ \left[ 1-  \left(1- \frac{\mu(B(e,\alpha/2))}{\mu(G_N)}\right)^{n}\right]  \left(1- \frac{\mu(B(e,\alpha/2))}{\mu(G_M)}\right)^{m}  
\end{multline*} 
Now, using the union bound 
\begin{align*}
    \mathbb{P}(E) \leq \frac{\mu(G_N) \mu(G_M)}{\mu(B(e,\alpha/4))^2}\left\{ \left(1- \frac{\mu(B(e,\alpha/2))}{\mu(G_N)}\right)^{n} + \right. \\ \left.  \left[ 1-  \left(1- \frac{\mu(B(e,\alpha/2))}{\mu(G_N)}\right)^{n}\right]  \left(1- \frac{\mu(B(e,\alpha/2))}{\mu(G_M)}\right)^{m} \right\}.
\end{align*}
By Lemma \ref{mixedsamplingcoveringradius}, the sampling inequality holds true if $r_{X \times Y}(n,m) < \left(\frac{1-\delta}{2\tilde{\kappa}_{\theta}}\right)^{1/\theta}.$ In particular, if we put $\alpha=\epsilon^{1/\theta} ({2\tilde{\kappa}_{\theta}})^{-1/\theta},$ the sampling inequality holds with probability at least
\begin{multline*}
   1- \frac{\mu(G_N) \mu(G_M)}{\mu(B(e,\epsilon^{1/\theta} ({2\tilde{\kappa}_{\theta}})^{-1/\theta}/4))^2}\left\{ \left(1- \frac{\mu(B(e,\epsilon^{1/\theta} ({2\tilde{\kappa}_{\theta}})^{-1/\theta}/2))}{\mu(G_N)}\right)^{n} + \right. \\ \left.  \left[ 1-  \left(1- \frac{\mu(B(e,\epsilon^{1/\theta} ({2\tilde{\kappa}_{\theta}})^{-1/\theta}/2))}{\mu(G_N)}\right)^{n}\right]  \left(1- \frac{\mu(B(e,\epsilon^{1/\theta} ({2\tilde{\kappa}_{\theta}})^{-1/\theta}/2))}{\mu(G_M)}\right)^{m} \right\}.
\end{multline*}
\end{proof} 
In the above theorem, one can, in particular, take 
$$c_{n,m}=\frac{4}{\sqrt{w(e,e)}}  \|\underbrace{(1,1,\cdots,1)}_{n-terms}\|_{\ell^{\phi}(X)} \|\underbrace{(1,1,\cdots,1)}_{m-terms}\|_{\ell^{\psi}(Y)}.$$

\section{Examples} \label{examples}
\subsection{Sampling in weighted shift-invariant subspaces of Orlicz spaces} For an Orlicz function $\phi,$ we want to consider sampling for the functions in the shift-invariant space 
$$\displaystyle V_{\phi, 1/w}(g):=\left\{ f= \sum_{k \in \ZZ^n} c_k g(\cdot -k): (c_{k})_{k \in \ZZ^n} \in \ell^{\phi}_{1/w}(\ZZ^n) \right\}, $$
for a generator function $g$ chosen appropriately. 

The first issue here is the well-definedness of these shift-invariant spaces. This requires establishing the convergence of the series generated by translates of the underlying generator and showing that the resulting functions belong to the weighted Orlicz space. To study the well-definedness, we begin by introducing a class of functions to which the generator of the shift-invariant space will belong. Following the terminology in \cite{UnserSamplingNOnDecaying}, these spaces are referred to as \emph{weighted hybrid-norm spaces.}

\begin{definition}[Weighted hybrid-Norm spaces]
    For $p,q \geq 1,$ the hybrid-norm spaces $W_{p,q}(\RR^{n})$ consists of functions $f: \RR^n  \to \CC$ such that the norm
    \begin{equation*}
        \|f\|_{W_{p,q}(\RR^n)}:= \left( \int_{\RR^n} \left\{ \sum_{k \in \ZZ^n}|f(x+k)|^{p} \right\}^{q/p} dx\right)^{1/q}
    \end{equation*}
    is finite, with usual adjustment for the case when either of $p$ or $q$ is infinity. For a weight function $w,$ the weighted hybrid norm space $W_{p,q,w}(\RR^{n})$ consists of functions $f$ having the the norm
    \begin{equation*}
        \|f\|_{W_{p,q,w}(\RR^n)}:= \|fw\|_{W_{p,q}(\RR^n)}
    \end{equation*}
    is finite.
\end{definition}
We will need the following result, due to Nyugen and Unser, from \cite{UnserSamplingNOnDecaying}.
\begin{prop}[{\cite[Proposition 6]{UnserSamplingNOnDecaying}}] \label{GRScond}
    Let $w$ be a sub-multiplicative weight satisfying the Gelfand-Raikov-Shilov (GRS) condition
    \begin{equation}
        \lim_{n \to \infty} w(kn)^{1/n}=1, ~ \forall k \in \ZZ^n.
    \end{equation}
    Suppose that the generator $g \in W_{1,\infty,w}(\RR^n),$ and $\{g(\cdot-k): k \in \ZZ^n\}$ is a Riesz basis for $V_{2}(g).$ Then the dual generator $g_{\operatorname{\tiny dual}}$ is also in $W_{1,\infty,w}(\RR^n).$
\end{prop}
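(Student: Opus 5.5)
The plan is a Wiener-lemma argument in the spirit of Nguyen and Unser. Writing $\hat{A}(\xi)=\sum_{k\in\ZZ^n}\langle g,g(\cdot-k)\rangle e^{-2\pi i k\cdot\xi}$ for the Gram symbol, the dual generator is
\[
g_{\operatorname{dual}}=\sum_{k\in\ZZ^n} b_k\, g(\cdot-k),
\]
where $(b_k)$ are the Fourier coefficients of $1/\hat{A}$. So the argument splits into three steps: (i) the autocorrelation sequence $a=(a_k)$, $a_k=\langle g,g(\cdot-k)\rangle$, lies in the weighted Beurling algebra $\ell^1_w(\ZZ^n)$; (ii) its convolution inverse $b$ also lies in $\ell^1_w(\ZZ^n)$; (iii) $W_{1,\infty,w}(\RR^n)$ is stable under $\ell^1_w$-combinations of integer translates.

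For step (i), the membership $g\in W_{1,\infty,w}$ gives a periodic majorant. Set
\[
G_w(x)=\sum_{m}|g(x+m)|w(x+m),
\]
which is bounded. Then
\[
|a_k|w(k)\le \int_{[0,1]^n}\sum_{m}|g(x+m)||g(x+m-k)|\,w(k)\,dx .
\]
Using sub-multiplicativity, $w(k)\le w(x+m)\,w(x+m-k)$ (recall $w$ is even). Summing over $k$ then gives
\[
\|a\|_{\ell^1_w}\le \|G_w\|_{L^1([0,1]^n)}\,\|G_w\|_{L^\infty}<\infty,
\]
since $W_{1,\infty,w}\subset W_{1,1,w}$ on the unit cube.

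Step (iii) is similar. With $f=\sum_k b_k g(\cdot-k)$ we have
\[
\sum_m|f(x+m)|w(x+m)\le\sum_k|b_k|\sum_m|g(x+m-k)|\,w(x+m-k)\,w(k)\le \|b\|_{\ell^1_w}\,\|G_w\|_{\infty}.
\]
The same estimate also justifies pointwise absolute convergence of the defining series.

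The main step is (ii). The Riesz basis hypothesis means $0<\alpha\le\hat{A}(\xi)\le\beta$ on the torus, so $\hat{A}$ never vanishes. By the GRS condition, the Gelfand spectrum of the commutative Banach algebra $\ell^1_w(\ZZ^n)$ is exactly the torus: characters correspond to $z\in\CC^n$ with $w(k)\ge|z^k|$, and GRS forces $|z_j|=1$. Hence Wiener's lemma for $\ell^1_w$ (via Gelfand theory) shows that the inverse of $a$, namely $b$ with $\hat{b}=1/\hat{A}$, lies in $\ell^1_w$. I expect this step to be the only real obstacle, since it is where GRS is needed. I would cite the standard Beurling-algebra version of Wiener's lemma (Gröchenig's survey) rather than reprove the identification of the maximal ideal space.

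Finally, I would check that $\sum_k b_k g(\cdot-k)$ is indeed the dual generator, i.e. $\langle g_{\operatorname{dual}},g(\cdot-k)\rangle=\delta_{k,0}$. This follows from $b*a=\delta$, with the interchange of sum and integral justified by the absolute convergence established in step (iii).
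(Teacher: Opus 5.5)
Your proposal is correct. The paper gives no proof of its own and simply quotes the result from Nguyen--Unser; your argument is the standard Wiener-lemma route behind that citation: the autocorrelation sequence lies in $\ell^1_w(\ZZ^n)$, it is inverted in the Beurling algebra (GRS identifies the Gelfand spectrum with the torus), and $W_{1,\infty,w}(\RR^n)$ is stable under $\ell^1_w$-combinations of integer translates. The details you leave implicit are routine: continuity of $\hat{A}$, since $w\ge 1$ for even sub-multiplicative weights so $\ell^1_w\subset\ell^1$, and membership of $\sum_k b_k g(\cdot-k)$ in $V_2(g)$, so that biorthogonality identifies it as the dual generator.
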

A fundamental ingredient in the study of well-definedness of the shift-invariant spaces is the stability of the system of translates of the generator. This stability is expressed generally through Riesz-type bounds, which guarantee that the coefficient sequence and the corresponding function are determined by each other in a stable way. The following will establish these bounds in the present setting.    
\begin{theorem} \label{orliczsis}
    Let $w$ be a sub-multiplicative weight satisfying the GRS condition, and further assume that $g$ satisfies the assumptions in Proposition \ref{GRScond}, then the following inequality holds 
    \begin{equation}
        C_1 \|c\|_{\ell^{\phi}_{1/w}(\ZZ^n)} \leq \left\| \sum_{k \in \ZZ^n} c_k  g(\cdot-k) \right\|_{L^{\phi}_{1/w}(\RR^n)} \leq C_2 \|c\|_{\ell^{\phi}_{1/w}(\ZZ^n)}, ~~ \forall c \in \ell^{\phi}_{1/w}(\ZZ^n)
    \end{equation}
  and for some $C_1, C_2 > 0.$

\end{theorem}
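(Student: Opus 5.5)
The plan is to prove both Riesz-type bounds by a Schur-type argument on the discrete–continuous pairing, following the $L^p$ case in \cite{UnserSamplingNOnDecaying}.

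\textbf{Upper bound.} Write $f=\sum_k c_k g(\cdot-k)$ and split $\RR^n$ into the unit cubes $Q_j=j+[0,1)^n$. For $x\in Q_j$ we have $|f(x)|\le \sum_k |c_k|\,|g(x-k)|$. Using sub-multiplicativity,
\[
\frac{1}{w(x)}\le \frac{w(x-k)}{w(k)},
\]
and hence
\[
\frac{|f(x)|}{w(x)}\le \sum_k \frac{|c_k|}{w(k)}\, G(j-k), \qquad G(m):=\sup_{y\in[0,1)^n}|g(y+m)|\,w(y+m).
\]
The membership $g\in W_{1,\infty,w}$ should give $G\in\ell^1(\ZZ^n)$. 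I am using the hybrid norm with the roles of sum and sup arranged appropriately; if the stated definition does not yield this directly, I would pass through an amalgam-type bound.

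The right-hand side is a discrete convolution $(|c|/w)*G$ evaluated at $j$. The discrete version of the Orlicz interpolation theorem (Theorem \ref{orliczinterpolation}, or directly Jensen's inequality with weights $G(j-k)/\|G\|_1$) gives
\[
\|(|c|/w)*G\|_{\ell^\phi}\le \|G\|_{\ell^1}\,\|c\|_{\ell^\phi_{1/w}}.
\]
Finally, since $\mu(Q_j)=1$, a step function that is constant on the $Q_j$ has the same Luxemburg norm in $L^\phi$ as its coefficient sequence in $\ell^\phi$. This is essentially Lemma \ref{seqequiv} with $X=\ZZ^n$. So $C_2=\|G\|_{\ell^1}$ up to a constant.

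\textbf{Lower bound.} By Proposition \ref{GRScond}, the dual generator $g_{\mathrm{dual}}$ lies in $W_{1,\infty,w}$, and it satisfies the biorthogonality relation
\[
\langle g(\cdot-k),g_{\mathrm{dual}}(\cdot-l)\rangle=\delta_{kl}.
\]
Therefore
\[
c_l=\int f(x)\,\overline{g_{\mathrm{dual}}(x-l)}\,dx,
\]
which is justified by absolute convergence, since the coefficients grow at most like $w$ and the dual generator decays in the $w$-weighted sense. Estimating as before,
\[
\frac{|c_l|}{w(l)}\le \sum_j \int_{Q_j}\frac{|f(x)|}{w(x)}\,|g_{\mathrm{dual}}(x-l)|\,w(x-l)\,dx \le \sum_j \widetilde G(j-l)\int_{Q_j}\frac{|f|}{w}.
\]
Here $\widetilde G\in\ell^1$ is built from $g_{\mathrm{dual}}$ in the same way $G$ was built from $g$.

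The sequence $a_j=\int_{Q_j}|f|/w$ satisfies $\|a\|_{\ell^\phi}\le \|f\|_{L^\phi_{1/w}}$. This follows from Jensen's inequality on each unit cube: $\phi(a_j/\lambda)\le \int_{Q_j}\phi(|f|/(w\lambda))$. Applying the discrete Young inequality once more gives $\|c\|_{\ell^\phi_{1/w}}\le \|\widetilde G\|_1\,\|f\|_{L^\phi_{1/w}}$, so $C_1=\|\widetilde G\|_1^{-1}$.

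\textbf{Main obstacle.} I expect the hardest step to be justifying the coefficient-recovery formula for non-decaying $c\in\ell^\phi_{1/w}$. The biorthogonality is only known in $L^2$, so I would first establish the formula for finitely supported $c$. I would then pass to general $c$ using two facts: the upper bound already shows the series converges in $L^\phi_{1/w}$, and pairing against $g_{\mathrm{dual}}(\cdot-l)$ is continuous on $L^\phi_{1/w}$. The continuity follows by Hölder's inequality, because $g_{\mathrm{dual}}(\cdot-l)\,w\in L^1\cap L^\infty\subset L^{\phi^*}$.

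One further point needs care: $\ell^\phi$ sequences are limits of finitely supported ones. This uses the $\Delta_2$-condition, which the paper assumes throughout.
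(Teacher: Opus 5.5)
\textbf{There is a genuine gap.} It is the claim that $g\in W_{1,\infty,w}$ gives $G\in\ell^1(\ZZ^n)$, where $G(m)=\sup_{y\in[0,1)^n}|g(y+m)|w(y+m)$.

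The hypothesis only says that the weighted periodization is bounded, $\sup_{y}\sum_m |g(y+m)|w(y+m)<\infty$. This is also all that Proposition \ref{GRScond} gives you for $g_{\mathrm{dual}}$. By contrast, $\|G\|_{\ell^1}=\sum_m\sup_y|g(y+m)|w(y+m)$ is the weighted Wiener amalgam norm. Since $\sup_y\sum_m\le\sum_m\sup_y$, the implication runs the wrong way, and no amalgam-type bound can recover it. For a counterexample, add to $\chi_{[0,1)}$ a small multiple of thin unit spikes on $[m+a_m,m+a_m+\delta_m)$, $m\ge1$, with the intervals $[a_m,a_m+\delta_m)\subset[0,1)$ pairwise disjoint. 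The periodization stays bounded and the translates still form a Riesz basis, but $G\notin\ell^1$. Both of your estimates go through the cube discretization $\sup_{x\in Q_j}$, so both need $G,\widetilde G\in\ell^1$. As written, you prove the theorem only under the stronger amalgam hypothesis on $g$ and $g_{\mathrm{dual}}$.

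\textbf{The repair is to avoid discretizing.}

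For the upper bound, apply Jensen pointwise in $x$ with weights $|g(x-k)|w(x-k)/\|g\|_{W_{1,\infty,w}}$. These sum to at most $1$ precisely because of the $W_{1,\infty,w}$ hypothesis. Then integrate in $x$, using $\|g\|_{L^1_w}=\int_{[0,1]^n}\sum_k|g(x-k)|w(x-k)\,dx\le\|g\|_{W_{1,\infty,w}}$. This yields $\|f\|_{L^\phi_{1/w}}\le\|g\|_{W_{1,\infty,w}}\|c\|_{\ell^\phi_{1/w}}$, and it is exactly the paper's argument.

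For the lower bound, the paper uses duality. It pairs $c$ with $b\in\ell^{\phi^*}_w$ of norm one and applies H\"older. It then bounds $\|\sum_k b_k g_{\mathrm{dual}}(\cdot-k)\|_{L^{\phi^*}_w}$ by the same Jensen argument with $(\phi^*,w)$ in place of $(\phi,1/w)$.

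Your direct pointwise bound can also be kept. It reads $|c_l|/w(l)\le\int \frac{|f(x)|}{w(x)}\,h(x-l)\,dx$ with $h=|g_{\mathrm{dual}}|w$. Apply Jensen with the probability density $h(x-l)/\|h\|_{L^1}$ in $x$, then sum over $l$ using $\sum_l h(x-l)\le\|h\|_{W_{1,\infty}}$. Finally absorb the factor $\|h\|_{W_{1,\infty}}/\|h\|_{L^1}\ge 1$ by convexity of $\phi$. This gives $\|c\|_{\ell^\phi_{1/w}}\le\|g_{\mathrm{dual}}\|_{W_{1,\infty,w}}\|f\|_{L^\phi_{1/w}}$ without invoking the identification $(\ell^\phi_{1/w})^*=\ell^{\phi^*}_w$.

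Your justification of $c_l=\langle f,g_{\mathrm{dual}}(\cdot-l)\rangle$ is fine, and more careful than the paper's. It goes via finitely supported sequences, the $\Delta_2$-condition, and $g_{\mathrm{dual}}(\cdot-l)\,w\in L^1\cap L^\infty$.
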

 \begin{proof}
      To see this, note that for $\displaystyle f=\sum_{k \in \ZZ^n} c_k g(\cdot -k)$ 
    \begin{equation*}
        \begin{split}
            \frac{|f(x)|}{w(x)} &\leq \sum_{k \in \ZZ^n} \frac{|c_k| |g(x-k)|}{w(x)} \\
            &\leq \sum_{k \in \ZZ^n} \frac{|c_k|}{w(k)} |g(x-k)|w(x-k).
        \end{split}
    \end{equation*}
Since $\displaystyle \|g\|_{W_{1,\infty,w}}= \sup_{x \in [0,1]^n} \sum_{k \in \ZZ^n} |g(x-k)w(x-k)|,$ and observe
    \begin{equation*}
        \phi\left(\frac{|f(x)|}{\lambda w(x)}\right) \leq \sum_{k \in \ZZ^n} \phi \left( \frac{|c_k| \|g\|_{W_{1,\infty,w}}}{w(k) \lambda} \right) \frac{|g(x-k)||w(x-k)|}{\|g\|_{W_{1,\infty,w}}}
    \end{equation*}
    and therefore
    \begin{equation*}
        \int_{\RR^n} \phi\left(\frac{|f(x)|}{\lambda w(x)}\right) d\mu(x) \leq \sum_{k \in \ZZ^n} \phi \left( \frac{|c_k| \|g\|_{W_{1,\infty,w}}}{w(k) \lambda} \right) \frac{\|g\|_{L^{1}_{w}(\RR^n)}}{\|g\|_{W_{1,\infty,w}}}.
    \end{equation*}
    This implies
    \begin{equation*}
        \|f\|_{L^{\phi}_{1/w}(\RR^n)} \leq \|c\|_{\ell^{\phi}_{1/w}(\ZZ^n)} \|g\|_{W_{1,\infty,w}}.
    \end{equation*}

Now, note that since $g$ and $g_{\text{\tiny dual}}$ are orthogonal, therefore $c_k = \langle f, g_{\tiny\mbox{dual}}(\cdot -k) \rangle.$
Take $b=(b_k) \in \ell^{\phi^*}_{w}(\ZZ^n),$ the dual space of $\ell^{\phi}_{1/w}(\ZZ^n)$ with $\|b\|_{\ell^{\phi^*}_{w}(\ZZ^n)}=1,$ and consider 
    \begin{equation*}
        \begin{split}
            \langle b,c \rangle &= \sum_{k \in \ZZ^n} b_k \int_{\RR^n} f(t) \overline{g_{\operatorname{\tiny{dual}}}(k-t)} dt \\
             &= \int_{\RR^n} \frac{f(t)}{w(t)} \sum_{k \in \ZZ^n} b_k \overline{g_{\operatorname{\tiny{dual}}}(k-t)} w(t) dt \\
             &\leq \|f\|_{L^{\phi}_{1/w}(\RR^n)} \left\|\sum_{k \in \ZZ^n} b_k \overline{g_{\operatorname{\tiny{dual}}}(k-\cdot)}\right\|_{L^{\phi^*}_{w}(\RR^n)}\\ 
             &\leq \|f\|_{L^{\phi}_{1/w}(\RR^n)} \|b\|_{\ell^{\phi^*}_{w}(\ZZ^n)} \|g_{\operatorname{\tiny{dual}}}\|_{W_{1,\infty,w}}.
        \end{split}
    \end{equation*}
    This gives 
    \begin{equation*}
        \|c\|_{\ell^{\phi}_{1/w}(\ZZ^n)} \lesssim \|f\|_{L^{\phi}_{1/w}(\RR^n)}. \qedhere
    \end{equation*}
     \end{proof}
As an immediate consequence of the preceding result, the space $V_{\phi,1/w}(g)$ is well defined and it is the shift-invariant space generated by $g$. That is, every function in the shift-invariant space can be represented as a convergent series of translates of the generator $g$, with coefficients belonging to the corresponding weighted Orlicz sequence space.

Further, it can be noted that the weighted shift-invariant space $V_{\phi,1/w}(g)$ is the image space of an idempotent integral operator with the kernel given by
$$K(x,y)=\sum_{k\in\mathbb{Z}^n}g(x-k)\,g_{\text{\tiny{dual}}}(y-k).$$
Consequently, if both $g$ and $g_{\mathrm{dual}}$ possess suitable decay, the sampling theorems established guarantee stable reconstruction of functions in $V_{\phi,1/w}(g)$.
In \cite{UnserSamplingNOnDecaying,NondecayingMixedLebesgue}, the authors studied the stable sampling problem only in the setting where the samples are obtained through pre-filtering. Consequently, their work is restricted to this specific sampling framework in $V_{\phi,1/w}$. In contrast, our result establishes a broader class of sampling problems in $V_{\phi,1/w}$, including direct sampling, average sampling, and random sampling.

     
  \subsection{Sampling in weighted shift-invariant subspaces of mixed-norm Orlicz spaces } For Orlicz functions $\phi$ and $\psi,$ we now define the weighted mixed-norm shift-invariant spaces 
  $$\displaystyle V_{\phi,\psi, 1/w}(g):=\left\{ f= \sum_{k,l \in \ZZ^n} c_{k,l} g(\cdot-k,\cdot-l): (c_{k,l})_{k,l \in \ZZ^n} \in \ell^{\phi,\psi}_{1/w}(\ZZ^n \times \ZZ^n)  \right\}, $$
  for an appropriately chosen generator $g$. As in the previous subsection, we begin by proving the well-definedness of these spaces. In this direction, we first establish that under certain decay assumption on the generator $g$, the expansion  $\sum_{k,l \in \ZZ^n} c_{k,l} g(\cdot-k,\cdot-l)$ makes sense in the weighted mixed-norm Orlicz space. This is the content of the following result.
     \begin{lemma}\label{mixedsisupper}
        Let $\phi, \psi$ be Orlicz functions and $w$ be a sub-multiplicative weight. Assume $g:\RR^n \times \RR^n \to \CC$ is a function such that 
        \begin{equation} \label{sisconditionformixed}
            \sup_{y \in [0,1]^n} \sum_{k \in \ZZ^n} \sup_{x \in [0,1]^n} \sum_{l \in \ZZ^n} |g(x-k,y-l)| w(x-k,y-l)< \infty,
        \end{equation}
        then $\displaystyle f= \sum_{k,l \in \ZZ^n} c_{k,l} g(\cdot-k,\cdot-l) $ belongs to $ L^{\phi, \psi}_{1/w}(\RR^n \times \RR^n)$ for $c=(c_{k,l})_{k,l \in\ZZ^n} \in \ell^{\phi,\psi}_{1/w} (\ZZ^n \times \ZZ^n),$ and the estimate 
        \begin{equation}
            \|f\|_{L^{\phi,\psi}_{1/w}(\RR^n \times \RR^n)} \leq C \|c\|_{\ell^{\phi,\psi}_{1/w}(\ZZ^n \times \ZZ^n)}
        \end{equation}
        holds for some constant $C>0$.
     \end{lemma}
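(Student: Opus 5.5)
The plan is to imitate the one-variable upper Riesz bound in Theorem \ref{orliczsis}, applying the Jensen/convexity argument twice: first in the inner variable with the Orlicz function $\phi$, then in the outer variable with $\psi$. We use the sub-multiplicativity of $w$ to move the weight onto the generator. Since $w(x,y)^{-1}\le w(k,l)^{-1}w(x-k,y-l)$, we obtain pointwise
\begin{equation*}
\frac{|f(x,y)|}{w(x,y)} \le \sum_{k,l\in\ZZ^n} \frac{|c_{k,l}|}{w(k,l)}\, G(x-k,y-l), \qquad G:=|g|\,w .
\end{equation*}
So it suffices to prove the unweighted estimate for the nonnegative sequence $d_{k,l}=|c_{k,l}|/w(k,l)\in\ell^{\phi,\psi}$ and the kernel $G$. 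This pointwise domination also gives absolute convergence of the series a.e., once the norm bound is known for finite truncations, by monotone convergence.

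For the inner step, fix $y$ and set $F_l(x)=\sum_k d_{k,l}G(x-k,y-l)$. We bound $\|\sum_l F_l\|_{L^\phi}\le \sum_l \|F_l\|_{L^\phi}$ by the triangle inequality. For each $l$ we then bound $\|F_l\|_{L^\phi}$ as in Theorem \ref{orliczsis}. By Jensen's inequality with weights $G(x-k,y-l)/A_l(x,y)$, where $A_l(x,y)=\sum_k G(x-k,y-l)$, we get
\begin{equation*}
\int\phi\bigl(F_l/\lambda\bigr)dx\le\sum_k\phi\bigl(d_{k,l}A^*_l/\lambda\bigr)\int G(x-k,y-l)\,dx/A^*_l ,
\end{equation*}
with $A^*_l=\sup_x A_l$. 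This yields $\|F_l\|_{L^\phi}\lesssim \beta_l(y)\,\|(d_{k,l})_k\|_{\ell^\phi}$, where $\beta_l(y)$ is controlled by the sup-sum quantity in \eqref{sisconditionformixed}. Here I expect to read the hypothesis with the roles of the indices arranged so that the inner sup over $x$ of the sum in the $x$-shift is summed over the $y$-shifts. Concretely, we need $\sup_{y}\sum_l \beta_l(y)<\infty$ together with an $L^1$-in-$y$ bound $\int \beta_l(y)\,dy\lesssim$ const; the latter comes from periodization, since integrating over $\RR^n$ is the same as integrating over $[0,1]^n$ and summing over integer shifts.

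For the outer step, set $a_l=\|(d_{k,l})_k\|_{\ell^\phi}$, so that $\|f(\cdot,y)/w(\cdot,y)\|_{L^\phi}\lesssim \sum_l a_l\,\beta_l(y)$. This has the same structure as the one-variable problem with kernel $\beta_l(y)$ in place of $|g(y-l)|w(y-l)$. A second application of the Jensen argument with $\psi$ gives $\|\sum_l a_l\beta_l\|_{L^\psi}\le C\|(a_l)\|_{\ell^\psi}=C\|c\|_{\ell^{\phi,\psi}_{1/w}}$.

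The main obstacle is this bookkeeping. In the Jensen step, the normalizing constants produced by the inner $\phi$-estimate must be uniform in $l$, or at least controlled by $\beta_l(y)$ in a way that is simultaneously summable in $l$ uniformly in $y$ and integrable in $y$. The hypothesis \eqref{sisconditionformixed} gives exactly a "$\sup_y\sum\sup_x\sum$" quantity, so I would first try to show that it dominates both $\sup_y\sum_l\beta_l(y)$ and $\sup_l\int\beta_l$. If that fails, I would assume the analogous integrability condition and note the dependence of the constant $C$ on it.
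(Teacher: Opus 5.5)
Your route is the paper's: move $w$ onto $g$ by sub-multiplicativity. For fixed $y$, the triangle inequality in $l$ and the one-variable bound of Theorem~\ref{orliczsis} give $\|f(\cdot,y)/w(\cdot,y)\|_{L^{\phi}}\le\sum_l a_l\beta_l(y)$ with $\beta_l(y)=\sup_{x\in[0,1]^n}\sum_k|g(x-k,y-l)|w(x-k,y-l)$. Then rerun the one-variable argument in $y$. The paper closes this by asserting that \eqref{sisconditionformixed} implies $\sup_y\sum_l\beta_l(y)<\infty$. That is precisely the step you flagged, and it cannot be done. In \eqref{sisconditionformixed} the roles of $k$ and $l$ are interchanged, so it only gives $\beta_l(y)\le C$ uniformly in $l$, not summability in $l$.

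In fact the lemma is false as stated. Take $n=1$, $w\equiv 1$, pairwise disjoint intervals $I_m\subset(0,1)$ with $|I_m|=1/(2m^2)$, and $g(x,y)=\sum_{m\ge1}\chi_{I_m}(x)\chi_{[m,m+1)}(y)$. For $x\in[0,1]$ only $k=0$ contributes, and $\sum_l g(x,y-l)=\sum_m\chi_{I_m}(x)\le 1$, so the quantity in \eqref{sisconditionformixed} equals $1$. But $\beta_{-m}(y)=1$ for every $m\ge1$ and $y\in[0,1)$. Now take $c=\delta_{(0,0)}$ (so $f=g$), $\phi(t)=t^4$ and $\psi(t)=t^2$. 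Then $\|g(\cdot,y)\|_{L^{\phi}}=|I_m|^{1/4}$ for $y\in[m,m+1)$. Hence $\|g\|_{L^{\phi,\psi}}^2=\sum_{m\ge1}(2m^2)^{-1/2}=\infty$, while $\|c\|_{\ell^{\phi,\psi}}=1$.

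So no argument can extract your needed bound from the stated hypothesis. What has to be assumed is \eqref{sisconditionformixed} with $k$ and $l$ swapped, namely $\sup_{y\in[0,1]^n}\sum_l B(y-l)<\infty$ with $B(v)=\sup_{x\in[0,1]^n}\sum_k|g(x-k,v)|w(x-k,v)$. Under that hypothesis your plan is a complete proof, and the bookkeeping you were worried about takes care of itself:
\begin{itemize}
\item Periodizing in $x$ gives $\int_{\RR^n}|g(x,v)|w(x,v)\,dx\le B(v)$. So the Jensen step yields exactly $\|F_l\|_{L^{\phi}}\le B(y-l)\,\|(d_{k,l})_k\|_{\ell^{\phi}}$, with no extra constant and nothing to make uniform in $l$.
\item Since $\beta_l=B(\cdot-l)$, the outer step is literally the one-variable bound for the generator $B$.
\item The $L^1$-in-$y$ control you wanted is automatic, because $\int_{\RR^n}B\le\sup_y\sum_l B(y-l)$ by periodizing in $y$.
\end{itemize}
This gives the estimate with $C=\sup_{y\in[0,1]^n}\sum_l B(y-l)$.
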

     \begin{proof}
         Let $\displaystyle f(x,y)=\sum_{k,l \in \ZZ^n} c_{k,l} g(x-k,y-l),$ $c=(c_{k,l})_{k,l \in \ZZ^n} \in \ell^{\phi,\psi}_{1/w}(\ZZ^n \times \ZZ^n),$ then 
         \begin{equation*}
             \begin{split}
                 \frac{|f(x,y)|}{w(x,y)} &\leq \sum_{k,l \in \ZZ^n} \frac{|c_{k,l}| |g(x-k,y-l)|}{w(x,y)} \\
                 & \leq \sum_{k,l \in \ZZ^n} \frac{|c_{k,l}|}{w(k,l)} |g(x-k,y-l)| w(x-k,y-l).
             \end{split}
         \end{equation*}
         Since for each $y$ and $n,$  
         $$\displaystyle \sup_{x \in [0,1]^{n}}\sum_{k \in \ZZ^n} |g(x-k,y-l)|w(x-k,y-l) < \infty$$
         using triangle inequality and Lemma \ref{orliczsis}, we get
         \begin{equation*} 
         \begin{split}
             \left\|\frac{|f(\cdot,y)|}{w(\cdot,y)}\right\|_{L^{\phi}(\RR^n)} &\leq \sum_{n \in \ZZ^n} \left\| \sum_{k \in \ZZ^n} \frac{|c_{k,l}|}{w(k,l)} |g(x-k,y-l)| w(x-k,y-l) \right\|_{L^{\phi}(\RR^n)} \\
             &\leq \sum_{l \in \ZZ^n} \left\|\left(\frac{c_{k,l}}{w(k,l)}\right)_{m \in \ZZ^n} \right\|_{\ell^{\phi}(\ZZ^n)} \|g(\cdot, y-l)w(\cdot,y-l)\|_{W_{1,\infty}}.
             \end{split}
         \end{equation*}
        Observe (\ref{sisconditionformixed}) also implies that 
        $$\displaystyle \sup_{y \in [0,1]^n} \sum_{l \in \ZZ^n} \|g(\cdot, y-l) w(\cdot, y-l)\|_{W_{1,\infty}} < \infty$$
        and therefore 
         \begin{equation*}
             \left\|\frac{|f(\cdot,\cdot)|}{w(\cdot,\cdot)}\right\|_{L^{\phi,\psi}(\RR^n \times \RR^n)} \lesssim \left\|\left(\frac{c_{k,l}}{w(k,l)}\right)_{k,l \in \ZZ^n}\right\|_{\ell^{\phi,\psi}(\ZZ^n \times \ZZ^n)} ,
         \end{equation*}
         which, in different notations, is nothing but
         \begin{equation*}
             \|f\|_{L^{\phi,\psi}_{1/w}(\RR^n \times \RR^n)} \lesssim \|c\|_{\ell^{\phi,\psi}_{1/w}(\ZZ^n \times \ZZ^n)}.
         \end{equation*}  
         This concludes the proof.
     \end{proof}
     The following result establishes the stability of the coefficient sequence obtained through the dual pairing between a function and the translates of a generator function. 
     \begin{lemma}\label{mixedsislower}
         Let $\phi, \psi$ be Orlicz functions, and $w$ be a sub-multiplicative weight. Assume $g:\RR^n \times \RR^n \to \CC$ is a function such that it belongs to the space $L^{\phi^{*},\psi^{*}}_{w}(\RR^n \times \RR^n)$ and satisfies \eqref{sisconditionformixed}. Then the sequence $(c_{k,l})= \left( \langle f , T_{k,l}g \rangle \right)$ belongs to $\ell^{\phi,\psi}_{1/w}(\ZZ^n \times \ZZ^n)$ for $f \in L^{\phi,\psi}_{1/w}(\RR^n \times \RR^n),$ and we have the norm bound
         \begin{equation}
             \|c\|_{\ell^{\phi,\psi}_{1/w}(\ZZ^n \times \ZZ^n)} \leq C \|f\|_{L^{\phi,\psi}_{1/w}(\RR^n \times \RR^n)}
         \end{equation}
         for some constant $C>0.$
     \end{lemma}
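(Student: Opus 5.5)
The plan is a duality argument, mirroring the lower Riesz bound in Theorem \ref{orliczsis}, with Lemma \ref{mixedsisupper} playing the role of the synthesis estimate. I will pair the coefficient sequence against a test sequence in the dual space and move the sum inside the integral. Concretely, the sequence space dual to $\ell^{\phi,\psi}_{1/w}(\ZZ^n\times\ZZ^n)$ is (under the $\Delta_2$ assumption) $\ell^{\phi^*,\psi^*}_{w}(\ZZ^n\times\ZZ^n)$, and the Luxemburg norm is equivalent to the norm computed by duality up to a factor $2$ in each variable. So it suffices to bound $|\sum_{k,l} b_{k,l}\overline{c_{k,l}}|$ for finitely supported $b$ with $\|b\|_{\ell^{\phi^*,\psi^*}_{w}}\le 1$. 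Finite support lets me use a density or monotone-convergence argument at the end and avoids convergence issues when exchanging sum and integral.

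For such $b$, write
\[\sum_{k,l} b_{k,l}\overline{\langle f,T_{k,l}g\rangle}=\int\!\!\int \frac{\overline{f(x,y)}}{w(x,y)}\Big(\sum_{k,l} b_{k,l}\, g(x-k,y-l)\Big)w(x,y)\,dx\,dy.\]
Applying the mixed-norm H\"older inequality bounds this by
\[4\|f\|_{L^{\phi,\psi}_{1/w}}\,\Big\|\sum_{k,l} b_{k,l}\,T_{k,l}g\Big\|_{L^{\phi^*,\psi^*}_{w}}.\]
Everything therefore reduces to a synthesis estimate with the weight flipped. I would prove it by repeating the proof of Lemma \ref{mixedsisupper} with $(\phi,\psi,1/w)$ replaced by $(\phi^*,\psi^*,w)$.

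In that argument, the pointwise step becomes
\[|g(x-k,y-l)|\,w(x,y)\le |g(x-k,y-l)|\,w(x-k,y-l)\,w(k,l),\]
which follows from sub-multiplicativity, so that $|b_{k,l}|w(k,l)$ appears as the weighted coefficient. The rest is unchanged. First apply the inner Orlicz/Jensen estimate in $x$, using $\sup_x\sum_k |g|w<\infty$ from \eqref{sisconditionformixed} as in Theorem \ref{orliczsis}. Then sum over $l$, using the outer supremum in \eqref{sisconditionformixed}. This gives
\[\Big\|\sum_{k,l} b_{k,l}T_{k,l}g\Big\|_{L^{\phi^*,\psi^*}_{w}}\le C\|b\|_{\ell^{\phi^*,\psi^*}_{w}}.\]
The hypothesis $g\in L^{\phi^*,\psi^*}_{w}$ serves a separate purpose: it guarantees that each $c_{k,l}=\langle f,T_{k,l}g\rangle$ is well defined and finite by H\"older, since $\|T_{k,l}g\|_{L^{\phi^*,\psi^*}_w}\lesssim w(k,l)\|g\|_{L^{\phi^*,\psi^*}_w}$. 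It also justifies the Fubini exchange for finitely supported $b$.

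The main obstacle is the duality step for mixed-norm Orlicz sequence spaces. Identifying the norm of $c$ with $\sup_b|\langle b,c\rangle|$ requires the iterated Orlicz norm to be norming, up to constants, against $\ell^{\phi^*,\psi^*}$. I would prove this directly instead of invoking full duality, which the paper only states for strictly convex functions. For each fixed $l$, choose $b_{\cdot,l}$ norming the inner $\ell^{\phi}$ norm, using the Orlicz norm equivalence $\|a\|_{\ell^\phi}\le\sup_{\|b\|_{\ell^{\phi^*}}\le1}\sum|a b|\le 2\|a\|_{\ell^{\phi}}$. Then choose outer weights $\beta_l$ norming the resulting $\ell^{\psi}$ sequence, and set $b_{k,l}=\beta_l\,b^{(l)}_k$. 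With truncation to finite sets, this gives
\[\|c\|_{\ell^{\phi,\psi}_{1/w}}\le C'\sup_b|\langle b,c\rangle|,\]
which combined with the above yields the claim.
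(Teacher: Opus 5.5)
Your proposal is correct and follows the paper's proof. You pair $c$ against $b\in\ell^{\phi^*,\psi^*}_{w}(\ZZ^n\times\ZZ^n)$, move the sum inside the integral, apply the mixed-norm H\"older inequality, and bound $\bigl\|\sum_{k,l}b_{k,l}T_{k,l}g\bigr\|_{L^{\phi^*,\psi^*}_{w}}$ by rerunning Lemma \ref{mixedsisupper} with $(\phi^*,\psi^*,w)$, which the paper only indicates by ``arguments similar to previous lemma''. Your explicit norming construction $b_{k,l}=\beta_l b^{(l)}_k$, with truncation and the Fatou property, supplies the final step the paper compresses into ``This implies'' without needing the strict-convexity duality; as in the paper, though, your outer summation step actually uses \eqref{sisconditionformixed} in the form $\sup_{y}\sum_{l}\sup_{x}\sum_{k}|g(x-k,y-l)|\,w(x-k,y-l)<\infty$, i.e.\ with $k$ and $l$ playing the roles they have in the proof of Lemma \ref{mixedsisupper}, which is not what the display literally states.
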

     \begin{proof}
         Let $b=(b_{k,l}) \in \ell^{\phi^*,\psi^*}_{w}(\ZZ^n \times \ZZ^n)$ with $\|b\|_{\ell^{\phi^*,\psi^*}_{w}(\ZZ^n \times \ZZ^n)}=1,$ and consider
         \begin{equation*}
             \begin{split}
                 \sum_{k,l \in \ZZ^n} b_{k,l} c_{k,l} &= \sum_{k,l \in \ZZ^n} b_{k,l} \langle f , g(\cdot-k,\cdot-l) \rangle \\
                 &=  \left\langle f , \sum_{k,l \in \ZZ^n}b_{k,l}g(\cdot-k,\cdot-l) \right\rangle \\
                 &\leq \|f\|_{L^{\phi,\psi}_{1/w}(\RR^n \times \RR^n)} \left\| \sum_{k,l \in \ZZ^n}b_{k,l}g(\cdot-k,\cdot-l) \right\|_{L^{\phi^*,\psi^*}_{w}(\RR^n \times \RR^n)}.
             \end{split}
         \end{equation*}
         Now using the arguments similar to previous lemma, we get 
         \begin{equation*}
             \sum_{k,l \in \ZZ^n} b_{k,l} c_{k,l} \lesssim \|f\|_{L^{\phi,\psi}_{1/w}(\RR^n \times \RR^n)} \|b\|_{\ell^{\phi^*,\psi^*}_{w}(\ZZ^n \times \ZZ^n)} .
         \end{equation*}
         This implies 
         \begin{equation*}
              \|c\|_{\ell^{\phi,\psi}_{1/w}(\ZZ^n \times \ZZ^n)} \lesssim \|f\|_{L^{\phi,\psi}_{1/w}(\RR^n \times \RR^n)} 
         \end{equation*}
         and we are done.
     \end{proof}
     \begin{lemma}
         Let $g$ belong to the Schwartz class $\mathcal{S}(\RR^n \times \RR^n),$ and $w$ be a sub-multiplicative weight function that does not grow more than a polynomial. Assume that $\{g(\cdot-k,\cdot-l):  k,l \in \ZZ^n\}$ forms a Riesz basis for the shift invariant space generated by it. Then the dual generator $g_{\operatorname{\tiny{dual}}}$ is also in $\mathcal{S}(\RR^n \times \RR^n),$ and both $g$ and $g_{\operatorname{\tiny{dual}}}$ satisfy \eqref{sisconditionformixed}.
     \end{lemma}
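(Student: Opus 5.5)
The dual generator is characterized on the Fourier side. With $\widehat{g}$ the Fourier transform on $\RR^{2n}$, the Riesz basis hypothesis is equivalent to the Gramian
$$A(\xi,\eta)=\sum_{k,l\in\ZZ^n}|\widehat{g}(\xi+2\pi k,\eta+2\pi l)|^2$$
satisfying $0<m\le A\le M<\infty$ on $\RR^{2n}$. Then
$$\widehat{g_{\operatorname{\tiny dual}}}(\xi,\eta)=\widehat{g}(\xi,\eta)/A(\xi,\eta).$$
Equivalently, $g_{\operatorname{\tiny dual}}=\sum_{k,l}a_{k,l}\,g(\cdot-k,\cdot-l)$, where $(a_{k,l})$ are the Fourier coefficients of the $2\pi$-periodic function $1/A$.

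\textbf{Step 1: $A$ is smooth.} Since $g\in\mathcal{S}(\RR^{2n})$, also $\widehat g\in\mathcal{S}$. Hence every derivative $\partial^\alpha(|\widehat g|^2)$ decays faster than any polynomial, and the periodization series for $A$ and all its derivatives converge uniformly. So $A\in C^\infty(\mathbb{T}^{2n})$. Because $A\ge m>0$, the quotient rule gives $1/A\in C^\infty(\mathbb{T}^{2n})$. Its Fourier coefficients $(a_{k,l})$ are therefore rapidly decreasing.

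\textbf{Step 2: $g_{\operatorname{\tiny dual}}\in\mathcal{S}$.} I would show that $\widehat{g_{\operatorname{\tiny dual}}}=\widehat g\cdot(1/A)$ lies in $\mathcal{S}$. By Leibniz, each derivative is a finite sum of products of a derivative of $\widehat g$ (a Schwartz function) with a derivative of $1/A$ (a bounded function). Multiplying by any polynomial keeps every such product bounded, so $\widehat{g_{\operatorname{\tiny dual}}}\in\mathcal{S}$, and hence $g_{\operatorname{\tiny dual}}\in\mathcal{S}$.

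\textbf{Step 3: condition \eqref{sisconditionformixed} for any $h\in\mathcal{S}(\RR^{2n})$.} This covers both $h=g$ and $h=g_{\operatorname{\tiny dual}}$. Polynomial growth of $w$ means
$$w(x,y)\lesssim (1+|x|+|y|)^{s}\le (1+|x|)^{s}(1+|y|)^{s}.$$
Choose $N>s+n$. Then
$$|h(x,y)|\le C_N(1+|x|)^{-N-s}(1+|y|)^{-N-s},$$
which gives
$$|h(x-k,y-l)|\,w(x-k,y-l)\lesssim (1+|x-k|)^{-N}(1+|y-l|)^{-N}.$$
For $x\in[0,1]^n$, the sum $\sum_l (1+|x-l|)^{-N}$ is bounded uniformly in $x$ because $N>n$. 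Taking the supremum over $x$ therefore leaves $\lesssim(1+|y-k|)^{-N}$. Summing over $k$ and taking the supremum over $y\in[0,1]^n$ again gives a finite constant, which is \eqref{sisconditionformixed}.

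The main obstacle is Step 1, namely justifying term-by-term differentiation of the periodization and making the identification of the dual generator with the Fourier-side formula rigorous. The cleanest route is to invoke the standard characterization of Riesz bases of integer translates via the Gramian, and to note that the uniform convergence of all derivative series follows from Schwartz decay of $\widehat g$.
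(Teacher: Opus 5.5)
Your proposal is correct and follows the same route as the paper. The paper's proof just invokes the Fourier/Gramian characterization of the dual generator to get $g_{\operatorname{\tiny dual}}\in\mathcal{S}$, and then cites rapid decay against the polynomial growth of $w$ for \eqref{sisconditionformixed}; you supply the details it omits. One small bookkeeping slip: in \eqref{sisconditionformixed} as written, the inner sum over $l$ pairs with the second variable ($y-l$) and the outer sum over $k$ with $x-k$, so the inner sum is $\sum_l(1+|y-l|)^{-N}$ and the supremum over $x\in[0,1]^n$ leaves $\lesssim(1+|k|)^{-N}$; since your bound factorizes, the conclusion is unaffected.
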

     \begin{proof}
         The fact that $g_{\text{\tiny{dual}}}$ belongs to the $\mathcal{S}(\RR^n \times \RR^n)$ follows from the characterization of the dual generator in the shift-invariant space theory. Moreover, the polynomial decay of arbitrary order of both $g$ and $g_{\text{\tiny dual}}$ ensures (\ref{sisconditionformixed}).
     \end{proof}
     The next theorem is in the same spirit as that of Theorem \ref{orliczsis}. It establish the Riesz-type inequalities for the system $\{g(\cdot-k,\cdot-l): k,l \in \ZZ^n\}.$
     \begin{theorem}
         Let $\phi,\psi$ be Orlicz functions, and $w$ be a sub-multiplicative weight with at most polynomial growth. Assume $g \in \mathcal{S}(\RR^n \times \RR^n),$ and $ \{g(\cdot-k,\cdot-l): k,l \in \ZZ^n \}$ forms a Riesz sequence. Then the following inequality holds 
         \begin{equation}
             C_1 \|c\|_{\ell^{\phi,\psi}_{1/w}(\ZZ^n \times \ZZ^n)} \leq \left\| \sum_{k,l \in \ZZ^n} c_{k,l} g(\cdot-k,\cdot-l) \right\|_{L^{\phi,\psi}_{1/w}(\RR^n \times \RR^n)} \leq C_2 \|c\|_{\ell^{\phi,\psi}_{1/w}(\ZZ^n \times \ZZ^n)}
         \end{equation}
         for some $C_1,C_2 > 0$ and for all $c \in \ell^{\phi,\psi}_{1/w}(\ZZ^n \times \ZZ^n).$
     \end{theorem}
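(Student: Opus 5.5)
The plan is to deduce the theorem from the three preceding lemmas, following the proof of Theorem \ref{orliczsis}. The upper inequality should be immediate. Since $g \in \mathcal{S}(\RR^n \times \RR^n)$ and $w$ grows at most polynomially, $|g(x,y)|w(x,y) \lesssim (1+|x|+|y|)^{-N}$ for every $N$. Hence $g$ satisfies \eqref{sisconditionformixed}, which is also recorded in the preceding lemma. Lemma \ref{mixedsisupper} then gives
$$\Big\|\sum_{k,l} c_{k,l} g(\cdot-k,\cdot-l)\Big\|_{L^{\phi,\psi}_{1/w}} \le C_2 \|c\|_{\ell^{\phi,\psi}_{1/w}}$$
for all $c \in \ell^{\phi,\psi}_{1/w}(\ZZ^n\times\ZZ^n)$. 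Along the way I would note that the series converges absolutely pointwise: the polynomial decay of $gw$ dominates the at most polynomial growth of $c_{k,l}/w(k,l)$ permitted by membership in the weighted Orlicz sequence space. So $f$ is well defined.

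For the lower inequality I would use the dual generator. By the preceding lemma, $g_{\mathrm{dual}} \in \mathcal{S}(\RR^n\times\RR^n)$ and it satisfies \eqref{sisconditionformixed}. Rapid decay together with the polynomial bound on $w$ gives $g_{\mathrm{dual}} w \in L^1\cap L^\infty$ in each variable, uniformly in the other. By the same $L^1\cap L^\infty$ embedding used in the pointwise-evaluation proposition, applied iteratively, this yields $g_{\mathrm{dual}} \in L^{\phi^*,\psi^*}_{w}(\RR^n\times\RR^n)$. The biorthogonality
$$\langle g(\cdot-k,\cdot-l), g_{\mathrm{dual}}(\cdot-k',\cdot-l')\rangle=\delta_{k,k'}\delta_{l,l'}$$
should then recover the coefficients, $c_{k,l}=\langle f, g_{\mathrm{dual}}(\cdot-k,\cdot-l)\rangle$, where $f=\sum c_{k,l} g(\cdot-k,\cdot-l)$. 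Interchanging the sum and the pairing is justified by absolute convergence: $\sum_{k,l}|c_{k,l}||g(\cdot-k,\cdot-l)|$ lies in $L^{\phi,\psi}_{1/w}$ by Lemma \ref{mixedsisupper} applied to $|c|$, and $g_{\mathrm{dual}}$ lies in the dual weighted space, so Hölder's inequality applies. Lemma \ref{mixedsislower}, applied with $g_{\mathrm{dual}}$ as the analysing function, then gives $\|c\|_{\ell^{\phi,\psi}_{1/w}} \le C\|f\|_{L^{\phi,\psi}_{1/w}}$, and we set $C_1=1/C$.

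I expect the main obstacle to be inside Lemma \ref{mixedsislower}, in the duality step. Two things are needed there. First, the norm of $c$ must be computed by testing against $b$ in the unit ball of $\ell^{\phi^*,\psi^*}_{w}$; this requires the mixed Orlicz sequence duality, which holds under $\Delta_2$, though the paper states it only for the function spaces. If the duality were not exact, I would use the Orlicz (Köthe) norm, which is equivalent to the Luxemburg norm up to a factor of $2$ in each variable. Second, we need the bound
$$\Big\|\sum_{k,l} b_{k,l}\, g_{\mathrm{dual}}(\cdot-k,\cdot-l)\Big\|_{L^{\phi^*,\psi^*}_{w}} \lesssim \|b\|_{\ell^{\phi^*,\psi^*}_{w}}.$$
This is Lemma \ref{mixedsisupper} with $(\phi^*,\psi^*)$ in place of $(\phi,\psi)$ and $w$ in place of $1/w$. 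For that I would use $1/w(x) \le w(-x)/w(0)$, which follows from submultiplicativity and symmetry, so that the weight transfers to the generator in the same way. Rapid decay of $g_{\mathrm{dual}}$ absorbs this extra factor of $w$.
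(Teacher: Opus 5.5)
Your proposal is correct and follows the paper's proof. The upper bound comes from Lemma~\ref{mixedsisupper}, and the lower bound from Lemma~\ref{mixedsislower} applied to $g_{\mathrm{dual}}$ via $c_{k,l}=\langle f, g_{\mathrm{dual}}(\cdot-k,\cdot-l)\rangle$; your extra justifications (absolute convergence, $g_{\mathrm{dual}}\in L^{\phi^*,\psi^*}_{w}$, and K\"othe-duality norming of $\ell^{\phi,\psi}$) fill in steps the paper leaves implicit. One simplification: the weight transfer for the $L^{\phi^*,\psi^*}_{w}$ bound needs only $w(x,y)\le w(k,l)\,w(x-k,y-l)$, which is submultiplicativity itself, so the detour through $1/w(x)\le w(-x)/w(0)$ and the appeal to extra decay of $g_{\mathrm{dual}}$ are unnecessary.
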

     \begin{proof}
        By the assumptions, the dual generator $g_{\text{\tiny dual}}$ belongs to the space $\mathcal{S}(\RR^n \times \RR^n),$ and therefore to the space $L^{\phi^*, \psi^*}_{w}(\RR^n \times \RR^n).$ Both $g$ and $g_{\text{\tiny dual}}$ satisfy $(\ref{sisconditionformixed}).$ Therefore, Lemma \ref{mixedsisupper} and Lemma \ref{mixedsislower} are applicable. Upper inequality follows from Lemma \ref{mixedsisupper}, and lower inequality follows from Lemma \ref{mixedsislower} upon observing that $c_{k,l}= \left\langle f, g_{\tiny\mbox{dual}}(\cdot-k,\cdot-l)\right\rangle.$ 
    \end{proof}
The previous theorem established the well-definedness of the space $V_{\phi,\psi,1/w}(g).$ It also shows that the shift-invariant space generated $g$ coincides with the space $V_{\phi,\psi,1/w}(g).$ Also, as in the weighted Orlicz space setting, the weighted shift-invariant subspaces of mixed-norm Orlicz spaces are the image space of idempotent integral operators. Consequently, provided the generator and its dual satisfy suitable decay conditions, the sampling theorems developed in this paper guarantee stable reconstruction of functions in these spaces.

\subsection{Sampling in Orlicz-modulation spaces}

 We describe Orlicz modulation spaces using the framework of coorbit theory. The Orlicz spaces were introduced by Feichtinger in a series of papers starting from \cite{FeichtingerSegalAlgebra}. The description of Orlicz spaces can also be given using what is known as the Coorbit theory, introduced by Feichtinger and Gr\"ochenig in \cite{FeichtingerGrochenigCoorbit}. We briefly review the necessary background and refer the reader to \cite{primercoorbit} for a more detailed and accessible exposition, along with further references within.
 \par 
We consider the locally compact Hausdorff group $\mathbb{H}^{n}_{r}= \RR^n \times \RR^n \times \mathbb{T},$ with Haar measure $dx ~ dw ~dt,$ and the group operation is given by
\begin{equation*}
    (x,w,e^{2 \pi i t})(x',w',e^{2 \pi i t'}):= \left(x+x', w+w', e^{2 \pi i (t+t')} e^{\pi i (x'w-xw')}\right).
\end{equation*}
 The group $\mathbb{H}^{n}_{r}$ is called the \emph{reduced Heisenberg group}. The \emph{Schr\"odinger representation} of the reduced Heisenberg group $\rho: \mathbb{H}^{n}_{r} \to \mathcal{U}(L^{2}(\RR^n)),$ where $\mathcal{U}(L^{2}(\RR^n))$ is unitary operators on $L^{2}(\RR^n),$ is given by 
 \begin{equation*}
     \rho(x,w,e^{2 \pi i t})f(y):= e^{2 \pi i t} e^{\pi i xw} T_{x}M_{w}f(y).
 \end{equation*}
This is an irreducible, unitary, and square integrable representation. The \emph{wavelet transform} with respect to the Schr\"odinger representation $\rho$ is given by 
\begin{equation*}
    W_{g}f(x,w,e^{2 \pi i t})= \langle f, \rho\left(x,w,e^{2 \pi i t}\right)g\rangle_{L^{2}(\RR^n)}
\end{equation*}
for $f, g \in L^{2}(\RR^n).$ This definition can be extended, by duality, to windows $g \in M^{1}(\RR^n)$, the Feichtinger algebra, and to distributions $f \in M^{\infty}(\RR^n),$ the dual of the Feichtinger algebra. 

The Orlicz modulation spaces are then defined as 
\begin{equation*}
    M^{\phi}(\RR^n):=\{f \in M^{\infty}(\RR^n) : W_{g}f \in L^{\phi}(\mathbb{H}^{n}_{r})\}
\end{equation*}
for $g \in M^{1}(\RR^n),$ and the defined space is independent of the choice of $g.$ The norm on this space is given by
$$\|f\|_{M^{\phi}(\RR^n)}=\left\|W_{g}(f)\right\|_{L^{\phi}(\mathbb{H}^{n}_{r})}.$$
By the correspondence principle, the space $M^{\phi}(\RR^n)$ is isometrically isomorphic to the space 
$$M_{\phi}(\mathbb{H}_{r}^{n}):=\{F \in L^{\phi}(\mathbb{H}^{n}_{r}): F= F * W_{g}g\}$$
where $*$ is the convolution defined with respect to the group operation.

\begin{remark}
    The space $M_{\phi}(\mathbb{H}_{r}^{n})$ is image of an idempotent integral operator with the integral kernel $K$ given by $K(z,\zeta)=W_{g}g(\zeta^{-1}z).$ Consequently, the sampling theorem~\ref{samplingorlicz} reduces to estimating the oscillation of the function $W_{g}g$. In particular, if
$$\|\operatorname{osc}_{r}(W_{g}g)\|_{L^{1}}\longrightarrow 0\qquad\text{as } r\to 0,$$
then the hypotheses of Theorem~\ref{samplingorlicz} are satisfied, and hence the sampling theorem holds for $M_{\phi}(\mathbb{H}_{r}^{n})$. 
\end{remark} 
\begin{lemma}{\cite[Lemma 3.16]{DahlkeShearletCoorbit}}\label{oscillationcontrol}
    Let $F \in L^{1}(G)$ be such that $\displaystyle \int_{G} \left\|R_x \chi_Q F \right\|_{L^{\infty}(G)} d\mu(x) < \infty,$ where $Q$ is a relatively compact neighbourhood of the identity.  Then $ \operatorname{osc}_{r}(F) \in L^{1}(G),$ and if in addition $F$ is continuous, then  
    \begin{equation*}
        \lim_{r \to 0} \|\operatorname{osc}_{r}(F)\|_{L^{1}(G)}=0. 
    \end{equation*}
\end{lemma}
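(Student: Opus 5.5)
The plan is to bound $\operatorname{osc}_r(F)$ pointwise by a function that is manifestly integrable under the stated hypothesis, and then get the limit from dominated convergence. Write $\operatorname{osc}_r(F)(x)=\sup_{d(x,y)<r}|F(y)-F(x)|$. Using left-invariance of $d$, $d(x,y)<r$ means $y=xu$ with $u\in B(e,r)$. For $r$ small enough that $B(e,r)\subset Q$, this gives
\begin{equation*}
\operatorname{osc}_r(F)(x)\le 2\sup_{u\in Q}|F(xu)|\le 2\,\|R_x\chi_Q F\|_{L^\infty(G)},
\end{equation*}
provided the right-translation/restriction in the hypothesis is interpreted as the local supremum of $F$ on the set $xQ$, i.e.\ $\|R_x\chi_Q F\|_{L^\infty}\simeq \operatorname{ess\,sup}_{z\in xQ}|F(z)|$. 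I would fix this convention first, since the placement of the cutoff versus the translation matters on a non-abelian group. If the convention instead puts the neighbourhood as $Qx$, I would use right-invariance considerations, or replace $Q$ by a symmetric neighbourhood $Q\cap Q^{-1}$ and adjust.

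With the domination $\operatorname{osc}_r(F)(x)\le 2M(x)$, where $M(x):=\sup_{z\in xQ}|F(z)|$ is integrable by hypothesis, the first assertion follows for all small $r$. For larger $r$, I would cover $B(e,r)$ by finitely many translates $q_kQ$, which is possible since closed balls are compact. This bounds $\operatorname{osc}_r(F)$ by a finite sum of right translates of $M$. Each right translate remains integrable, at the cost of a factor of the modular function $\Delta(q_k)$.

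For the limit, assume $F$ is continuous. Then for each fixed $x$, $\operatorname{osc}_r(F)(x)\to 0$ as $r\to 0$ by continuity at $x$; since $G$ is metrizable, sequences $r_n\downarrow 0$ suffice. The functions $\operatorname{osc}_{r_n}(F)$ are dominated by $2M\in L^1(G)$ for $r_n$ small. Dominated convergence then gives $\|\operatorname{osc}_r(F)\|_{L^1(G)}\to 0$.

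The main obstacle I expect is measurability and the essential-versus-pointwise supremum. $\operatorname{osc}_r(F)$ is a supremum over an uncountable family. For continuous $F$ it is lower semicontinuous, hence measurable. In general I would define the oscillation with essential suprema, or take the supremum over a countable dense subset using second countability, so that the domination by $M$, which is built from an $L^\infty$ norm, is legitimate.
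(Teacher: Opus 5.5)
The paper gives no proof of this lemma; it is quoted from Dahlke et al. Your argument is the standard proof of such statements: bound $\operatorname{osc}_r(F)(x)$ by $2\sup_{xB(e,r)}|F|$, dominate by the local maximal function, get pointwise convergence from continuity, and conclude by dominated convergence. It is correct for continuous $F$ under the reading you adopt, namely that the hypothesis means $\int_G \operatorname{ess\,sup}_{z\in xQ}|F(z)|\,d\mu(x)<\infty$.

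One small point: when you compare the pointwise supremum with the essential supremum, let $u$ range only over $B(e,r)\subset\operatorname{int}Q$ and use continuity there. Over all of $Q$ that inequality can fail.

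You are right that the first assertion cannot be taken literally for discontinuous $F$. For $F=\chi_{\mathbb{Q}}$ on $\RR$ the hypothesis holds trivially, while $\operatorname{osc}_r(F)\equiv 1$. So redefining the oscillation is necessary, not merely a measurability fix.

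The one claim that fails is your fallback for the other convention. Take $R_xh(y)=h(yx)$. Then $\|(R_x\chi_Q)F\|_{L^\infty}=\operatorname{ess\,sup}_{Qx^{-1}}|F|$. After substituting $x\mapsto x^{-1}$, the hypothesis controls suprema over right translates $Qy$, weighted by $\Delta(y^{-1})\,d\mu(y)$. By contrast, $\operatorname{osc}_r(F)(y)$ involves the left translate $yB(e,r)=(yB(e,r)y^{-1})\,y$ with weight $d\mu(y)$. Comparing the two requires uniform control of conjugates of small balls and of $\Delta$. Replacing $Q$ by $Q\cap Q^{-1}$ does not address this, and no other adjustment can, because under this reading the lemma is false on non-unimodular groups.

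Here is a counterexample on the $ax+b$ group with $(a,b)(a',b')=(aa',b+ab')$. Take the points $z_j=(\alpha^{-1},-3j\varepsilon\alpha^{-1})$ for $|j|\le\alpha/3$.
\begin{itemize}
\item They satisfy $z_j^{-1}z_k=(1,3(j-k)\varepsilon)$, so they are uniformly separated for the left-invariant metric.
\item All their inverses $z_j^{-1}=(\alpha,0)(1,3j\varepsilon\alpha^{-1})$ lie in one left translate of a fixed compact set.
\item Place thin continuous bumps of height $m^{-2}$ at such clusters, with $\alpha=2^m$.
\end{itemize}
This produces a continuous $F\in L^1(G)$ with $\int_G\operatorname{ess\,sup}_{Qx^{-1}}|F|\,d\mu(x)\lesssim\sum_m m^{-2}<\infty$, yet $\int_G\operatorname{osc}_r(F)\,d\mu\gtrsim\sum_m 2^m m^{-2}=\infty$.

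So the $xQ$ reading is forced, not a matter of taste. It is equivalent to $\|F\cdot L_x\chi_Q\|_{L^\infty}$ with $L_xh(y)=h(x^{-1}y)$, which matches the left-invariant metric. You should state it as the hypothesis rather than offer to adjust.

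For the paper's application nothing is lost. The reduced Heisenberg group is unimodular, and $Q=[0,1]^{2n}\times\mathbb{T}$ is conjugation invariant, so both readings coincide there.
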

\begin{remark}
    If $g$ is taken to be a Schwartz class function on $\RR^{n}$, then the hypothesis of above lemma is satisfied with $Q=[0,1]^{2n}\times \mathbb{T}$ for $W_{g}g.$ And therefore, Theorem \ref{samplingorlicz} can be applied to obtain a sampling theorem for this space.
\end{remark}

 \subsection{Sampling in mixed-norm Orlicz-modulation spaces} Next example is that of the mixed-norm Orlicz modulation spaces. To define them, we take the group $\mathbb{H}^{n}_{r} \times \mathbb{H}^{n}_{r},$ which is the cartesian product of the reduced Heisenberg group $\mathbb{H}^{n}_{r}.$ On it, we consider the tensor product of the reduced Schr\"odinger representation $\rho$ 
$$\rho \otimes \rho : \mathbb{H}^{n}_{r} \times \mathbb{H}^{n}_{r} \to \mathcal{U}(L^{2}(\RR^n) \otimes L^{2}(\RR^n)) $$
given by 
$$ (\rho\otimes\rho)(g_1,g_2)= \rho(g_1)\otimes \rho(g_2)$$
for $g_1, g_2 \in \mathbb{H}^{n}_{r}.$ This is an irreducible unitary representation of $\mathbb{H}^{n}_{r} \times \mathbb{H}^{n}_{r}$ on $L^{2}(\RR^n) \otimes L^{2}(\RR^n).$ The action of this representation on elementary tensors is given by 
$$ (\rho\otimes\rho)(g_1,g_2) (f \otimes h)= \rho(g_1)(f)\otimes \rho(g_2)(h)$$
for $f,h \in L^{2}(\RR^n),$ and $g_1,g_2 \in \mathbb{H}^{n}_{r}.$ This combined with the fact that $L^{2}(\RR^n) \otimes L^{2}(\RR^n)$ is isometrically isomorphic to $L^{2}(\RR^n \times \RR^n),$ the action of $\rho \otimes \rho$ on any arbitrary $F \in L^{2}(\RR^n \times \RR^n)$ is given by 
\begin{multline*}
    (\rho \otimes \rho) ((x_1,w_1,t_1),(x_2,w_2,t_2))(F)(y_1,y_2)=e^{2 \pi i (t_1 +t_2)} e^{\pi i (x_1 w_1 + x_2 w_2)} \\T_{(x_1,x_2)}M_{(w_1,w_2)} F(y_1,y_2).
\end{multline*}
The mixed-norm Orlicz modulation space is then defined as 
\begin{equation*}
    M^{\phi,\psi}(\RR^{2n})=\{f \in M^{\infty}(\RR^{2n}): W_{g}f \in L^{\phi,\psi}(\mathbb{H}^{n}_{r} \times \mathbb{H}^{n}_{r})\}
\end{equation*}
where $g$ is an element from $M^{1}(\RR^{2n})$, and $W$ is the wavelet transform induced by the representation $\rho \otimes \rho$. The norm on this space is given by 
$$\|f\|_{M^{\phi,\psi}(\RR^{2n})}=\|W_{g}f\|_{L^{\phi,\psi}(\mathbb{H}^{n}_{r} \times \mathbb{H}^{n}_{r})}.$$
The space $M^{\phi,\psi}(\RR^{2n})$ is isometrically isomorphic to 
$$M_{\phi,\psi}(\RR^{2n}):=\{F \in L^{\phi,\psi}(\RR^{2n}): F=F * W_{g}g\}$$
where $*$ is the convolution.

The space $M_{\phi,\psi}(\RR^{2n})$ is image space of idempotent integral operator with kernel
$$K((a,b),(x,y))=W_{g}g((x,y)^{-1} (a,b))=W_{g}g(x^{-1}a,y^{-1}b).$$
The sampling theorem for the space $M_{\phi,\psi}(\RR^{2n})$ can be ensured by considering a nicely decaying function $g$, for e.g. a Schwartz function, and then the decay of the Cross-Schur norm for the oscillation of the kernel $K$ is then ensured by Lemma \ref{oscillationcontrol}. 

\bibliographystyle{abbrv}\bibliography{OMO.bib}
\end{document}